\documentclass[reqno,centertags,12pt]{amsart}

\usepackage[normalem]{ulem}

\usepackage{amsmath,amsthm}
\usepackage{amssymb,amsbsy}
\usepackage{bbm}
\usepackage{tikz}

\usepackage{enumitem}
\usepackage{mleftright}
\usepackage{xcolor}
\usepackage{color}
\usepackage{hyperref}
\usepackage{cancel}
\usepackage{amsfonts}
\usepackage{graphicx}
\usepackage{bookmark}

\long\def\metanote#1#2{{\color{#1}\
		\ifmmode\hbox\fi{\sffamily\mdseries\upshape [#2]}\ }}

\newif\ifcomments

\long\def\TS#1{\ifcomments\metanote{red!70!black}{#1}\fi}

-\marginparwidth \oddsidemargin -4mm \evensidemargin \oddsidemargin

\makeatletter
\newcommand\xleftrightarrow[2][]{%
	\ext@arrow 9999{\longleftrightarrowfill@}{#1}{#2}}
\newcommand\longleftrightarrowfill@{%
	\arrowfill@\leftarrow\relbar\rightarrow}
\makeatother

\makeatletter
\newcommand{\xRightarrow}[2][]{\ext@arrow 0359\Rightarrowfill@{#1}{#2}}
\makeatother

\newcounter{smalllist}

\DeclareMathOperator*{\Lip}{Lip}

\allowdisplaybreaks
\numberwithin{equation}{section}

\newcommand{\al}{\alpha}
\newcommand{\be}{\beta}
\newcommand{\ga}{\gamma}
\newcommand{\Ga}{\Gamma}
\newcommand{\de}{\delta}
\newcommand{\De}{\Delta}

\newcommand{\ta}{\theta}
\newcommand{\Ta}{\Theta}
\newcommand{\ka}{\kappa}
\newcommand{\la}{\lambda}
\newcommand{\La}{\Lambda}

\newcommand{\om}{\omega}
\newcommand{\Om}{\Omega}

\newcommand{\vphi}{\varphi}

\newcommand{\rb}[1]{\left(#1\right)}
\newcommand{\sqb}[1]{\left[#1\right]}
\newcommand{\cb}[1]{\left\{#1\right\}}

\newcommand{\tr}{\mathrm{tr}}

\newcommand{\mcl}{\mathcal}

\newcommand{\mbe}{\mathbb{E}}
\newcommand{\mbp}{\mathbb{P}}

\newcommand{\mcP}{\mathcal{P}}

\newcommand{\mcc}{\mathcal{C}}
\newcommand{\mcg}{\mathcal{G}}
\newcommand{\mcp}{\mathcal{P}}
\newcommand{\mA}{\mathcal{A}}
\newcommand{\mB}{\mathcal{B}}

\newcommand{\mJ}{\mathcal{J}}
\newcommand{\mX}{\mathcal{X}}

\newcommand{\mbn}{\mathbb N}

\newcommand{\mbr}{\mathbb R}

\newcommand{\f}{\frac}

\newcommand{\bs}{\boldsymbol}

\newcommand{\Lag}{\Lambda_{\text{glob}}}

\newcommand{\ID}{\mathrm{ID}}
\newcommand{\levyG}{\mathcal{G}^\La}

\newtheorem{theorem}{Theorem}[section]
\newtheorem{proposition}[theorem]{Proposition}
\newtheorem{lemma}[theorem]{Lemma}

\theoremstyle{definition}
\newtheorem{definition}[theorem]{Definition}

\newtheorem{remark}[theorem]{Remark}

\theoremstyle{definition}

\newtheorem{assumptionC}{Assumption}

\begin{document}
\title[]{Optimal Couplings of L\'evy Processes in the Class of Immersion Couplings}

\author{Tau Shean Lim, Ooi Ray Shua}

\address{\noindent Department of Mathematics \\ Xiamen University Malaysia}
\maketitle
\begin{abstract}
	We study the optimal coupling problem for L\'evy processes on $\mathbb{R}^d$ with respect to the quadratic cost. For any two such processes with finite second moments, we prove that the optimal L\'evy coupling constructed in \cite{KangLim2025}, which was previously shown to be optimal among \emph{Feller couplings}, is in fact optimal among the larger class of \emph{immersion couplings}. The proof makes use of a characterization of immersion couplings, which is equivalent to the classical martingale preservation definition of \cite{kendall2015coupling, KendallMajkaMijatovic2024} but more convenient for our purposes.
	The construction is based on two fundamental ingredients: the existence of an optimal coupling within the class of L\'evy couplings, and a dual formulation of the associated optimization problem. While both results were previously established in \cite{KangLim2025}, we provide here simpler and more transparent proofs relying only on optimal transport between infinitely divisible measures and a generalized minimax principle. These arguments are self-contained and may be of independent interest.

	\vspace{6ex}
	
	\noindent {\bf Keywords}: Immersion couplings; optimal coupling of L\'evy processes, optimal transport between infinitely divisible measures, Kantorovich-type duality
\end{abstract}
\section{Introduction}

Given two Markov processes $\{X_t\}_{t\ge0}$ and $\{Y_t\}_{t\ge0}$ on state spaces $\Pi$ and $\tilde\Pi$, a natural question is to construct a coupling — typically also Markovian — that is optimal in a certain sense. This is commonly referred to as the \emph{optimal Markovian coupling problem}. Such optimal couplings provide a quantitative way to compare the evolution of stochastic processes at fixed times and play an important role in probability theory and its applications, e.g., \cite{doeblin1937theorie, Griffeath1975, LindvallRogers1986, kendall1986nonnegative, kendall2015coupling, levin2017markov, wang2013harnack, stroock2006multidimensional, Chaintron_2022_1, Chen2020OptimalCouplings}.

A classical instance of this problem is the \emph{maximal coupling} of two Markov processes, which maximizes the probability that the two processes are equal at a given time (or at their meeting time), thereby minimizing the total variation distance between their time-marginal distributions. For two processes $\{X_t\}$ and $\{Y_t\}$, a coupling $\mathbf{P}_*$ is maximal if
\[
\mathbf{P}_*(X_t \neq Y_t) = d_{\mathrm{TV}}(\mu_t, \nu_t),\qquad \mbox{for all }t\ge 0,
\]
where $\mu_t$ and $\nu_t$ are the laws of $X_t$ and $Y_t$, respectively, and $d_{\mathrm{TV}}$ denotes the total variation distance. Equivalently, it minimizes the probability that the two processes differ. It is well known that a maximal coupling of the time-marginal measures always exists \cite{doeblin1937theorie,Griffeath1975,levin2017markov,propp1996exact}. In the context of Markov processes, such couplings are widely used to study mixing times, convergence rates, and perfect sampling algorithms.

Beyond maximal coupling, optimal couplings of processes with respect to a cost function $c: \Pi \times \tilde\Pi \to \mathbb{R}_+$ are studied within the framework of \emph{optimal transport} extended to stochastic processes. For instance, for two Markov processes $\{X_t\}$ and $\{Y_t\}$, one seeks a coupling that minimizes the expected cost 
\begin{align*}
	\mathbf{E}^{(x,y)}[c(X_t, Y_t)],\qquad \text{for all } t \ge 0,\; (x,y) \in \mathbb{R}^d \times \mathbb{R}^d.
\end{align*}
This condition will be called \emph{global $c$-optimality.}
For Brownian motion, the classical \emph{reflection coupling} (see \cite{LindvallRogers1986}) minimizes the meeting time and is optimal for concave costs, while the \emph{synchronous coupling} is optimal for convex costs. More recently, \cite{KendallMajkaMijatovic2024} studied this problem for identical continuous-time random walks (finite-activity L\'evy processes) on $\mathbb{R}$, under the assumption of unimodal jump distributions and a concave increasing cost function $c(x,y) = \phi(|x-y|)$, where $\phi: \mathbb{R}_+ \to \mathbb{R}_+$ is concave increasing.

One issue with the global optimality condition above, as pointed out by Chen \cite{Chen1994}, is that a globally $c$-optimal coupling may not exist in general, even for simple Markov processes. The requirement that the coupling minimizes the expected cost at all times $t \ge 0$ simultaneously is extremely strong, and existence is known only in some special cases e.g., those mentioned above.

To address this difficulty, Chen \cite{Chen1994} proposed an alternative, weaker notion of optimality based on the infinitesimal generator of the coupled process. A Markov coupling process with generator $\mathcal{J}_*$ is called \emph{locally $c$-optimal} if
\begin{align}\label{def:gen-opt}
	(\mathcal{J}_* c)(x,y) = \inf_{\mathcal{J}} (\mathcal{J} c)(x,y), \qquad \text{for all } (x,y) \in \Pi \times \tilde\Pi,
\end{align}
where the infimum is taken over all generators $\mathcal{J}$ of coupling processes of $\{X_t\}$ and $\{Y_t\}$. In other words, the coupling minimizes the \emph{derivative} of the expected cost $\mathbb{E}^{(x,y)}[c(X_t, Y_t)]$ at $t = 0$. This approach shifts the focus from global pathwise optimality to the infinitesimal level, where existence is more tractable. 
Chen showed that, under suitable assumptions, global $c$-optimality implies local $c$-optimality. He also proved the existence of locally $c$-optimal couplings for drift-diffusion processes on $\mathbb{R}^d$ with concave cost $c(x,y) = \phi(|x-y|)$, where $\phi$ is concave. Later, Zhang \cite{Zhang2000} extended the existence result to bounded Markov jump processes with general lower semicontinuous cost $c$.
The existence of a locally $c$-optimal coupling for general Markov (or Feller) processes, to the best of the authors' knowledge, remains open.

Recently, the first author (together with Kang \cite{KangLim2025}) resolved the case for general unbounded L\'evy processes with the quadratic cost $c(x,y) = \frac12|x-y|^2$, establishing the existence of a locally $c$-optimal coupling that is itself a L\'evy process. Moreover, this coupling is also globally $c$-optimal. Specifically, the optimality (both local and global) is established among all \emph{Feller} couplings. As a byproduct of this optimal coupling problem, a Wasserstein-type metric on the space of L\'evy generators was discovered.

The main goal of the present paper is to extend the optimality of the aforementioned L\'evy coupling from the class of Feller couplings to the larger class of \emph{immersion couplings}. Immersion couplings, introduced by Kendall \cite{kendall2015coupling,KendallMajkaMijatovic2024}, are couplings that preserve the martingale property of each component process with respect to the joint filtration. This notion arises naturally when one seeks to couple processes without introducing ``extra information" beyond the history of the individual processes. In particular, every Markovian coupling is an immersion coupling, but the converse is not true; immersion couplings form a strictly larger class. Thus, establishing optimality among all immersion couplings substantially strengthens the result of \cite{KangLim2025}.

The present paper builds on the existence result of \cite{KangLim2025} for the optimal L\'evy coupling. However, we provide independent and simplified proofs of its optimality (both local and global), which are less technical and apply to the larger class of immersion couplings. Our arguments are self-contained and do not rely on the technical generator-based analysis of \cite{KangLim2025}.

To extend the analysis from Feller couplings to immersion couplings, we first reformulate local optimality (Definition \ref{def:gen-opt}) in an equivalent martingale form (see Definition \ref{def:optimal-immersion}). Theorem \ref{thm:opt-immersion}, which can be regarded as one of the main results of this work, then provides a martingale criterion for local optimality. This criterion is the key tool for extending the optimality of the L\'evy coupling from \cite{KangLim2025} to the larger class of immersion couplings, and may also prove useful for future investigations of other cases.

Another minor but useful contribution of this work is a reformulation of immersion couplings in terms of conditional expectations (Definition \ref{def:immersion}). This equivalent definition reveals the structural simplicity of the notion and, we believe, provides a more convenient framework for future studies of immersion couplings. 

We begin in the next section with the basic settings and the statement of the main results. The plan for the remainder of the paper and further discussions of results are given at the end of Section \ref{sec:1}, after all the necessary notation and assumptions have been introduced.

\section{Settings and Main Results}\label{sec:1}
\subsection{Immersion couplings}\label{sec:1.1}
In the literature \cite{kendall2015coupling,KendallMajkaMijatovic2024}, the notion of \emph{immersion couplings} is defined for two stochastic processes. 
Formally, let $X=\{X_t\}_{t\ge0}$ and $Y=\{Y_t\}_{t\ge0}$ be processes defined on a common probability space. 
An \emph{immersion coupling} $(X', Y') = \{(X'_t, Y'_t)\}_{t\ge 0}$ is a process on the same space, adapted to a common filtration $\mathfrak{F} = \{\mathcal{F}_t\}_{t\ge 0}$, satisfying the following conditions:
\begin{enumerate}
	\item The marginal laws agree: $\operatorname{law}(X') = \operatorname{law}(X)$, $\operatorname{law}(Y') = \operatorname{law}(Y)$.
	\item Any martingale with respect to the natural filtration of $X'$ is also an $\mathfrak{F}$-martingale, and similarly for $Y'$.
\end{enumerate}
Intuitively speaking, an immersion coupling preserves the individual dynamics of each process while allowing them to be coupled in such a way that no “extra information” is introduced beyond the joint filtration.
In the present work, we work within the framework of \emph{filtered probability (or measurable) spaces}, which is the natural setting for stochastic analysis. To this end, we reformulate the notion of immersion couplings in terms of stochastic bases; this formulation, while equivalent to the classical one given in \cite{kendall2015coupling, KendallMajkaMijatovic2024}, offers a more transparent structural perspective for further analysis.

\subsubsection{Basics of filtered probability spaces}
Before presenting the definition, we recall some relevant terminology. 
A \emph{filtered measurable space} is a triplet $(\Omega,\mathcal{F},\mathfrak{F})$, where $(\Omega,\mathcal{F})$ is a measurable space and $\mathfrak{F}=\{\mathcal{F}_t\}_{t\ge 0}$ is a filtration, i.e., an increasing family of sub-$\sigma$-algebras of $\mathcal{F}$. 
Throughout this work, we assume that all filtrations are \emph{right continuous}, i.e., $\mathcal{F}_t = \bigcap_{s > t} \mathcal{F}_s$ for all $t \ge 0$.
In particular, we will often take $(\Omega,\mathcal{F},\mathfrak{F})$ to be \emph{the canonical path space of cadlag functions from $\mathbb{R}_+\to \Pi$}, where $\Pi$ is a Polish space. That is,
\begin{align*}
	\Omega = D([0,\infty); \Pi) := \{ \omega: [0,\infty) \to \Pi \mid \omega \text{ is cadlag} \},
\end{align*}
equipped with the $\sigma$-algebra $\mathcal{F}$ generated by the coordinate projections $\omega \mapsto \omega(t)$ for $t \ge 0$ (the \emph{Skorokhod $\sigma$-algebra}). The filtration $\mathfrak{F} = \{\mathcal{F}_t\}_{t\ge 0}$ is defined by $\mathcal{F}_t := \sigma( \omega(s) : 0 \le s \le t ),$ the $\sigma$-algebra generated by the paths up to time $t$, which is then completed and made right-continuous to satisfy the usual conditions.

Given a state (measurable) space $(\Pi,\mathcal{B})$, a \emph{$\Pi$-valued stochastic process} on $(\Omega,\mathcal{F},\mathfrak{F})$ is a measurable function
\[
S:\mathbb{R}_+\times \Omega \to \Pi
\]
such that $S_t(\omega)=S(t,\omega)$ is $\mathcal{F}_t$-measurable for all $t\ge 0$, i.e., $\{S_t\}_{t\ge0}$ is $\mathfrak{F}$-adapted. 
On the canonical path space $\Omega = D([0,\infty);\Pi)$, the canonical process is given by $X_t(\omega) = \omega(t)$, which is cadlag and adapted to the natural filtration.

A \emph{filtered probability space}, also known as a \emph{stochastic basis}, is a quadruple $(\Omega,\mathcal{F},\mathfrak{F},\mathbb{P})$, sometimes also denoted $(\Omega,\mathcal{F},\mathfrak{F},\mathbb{P},\mathbb{E})$ to emphasize the expectation operator
\[
\mathbb{E}[X] = \int_\Omega X \, d\mathbb{P},
\]
which allows us to distinguish expectations under different probability measures on the same filtered measurable space.
Throughout we always assume that the filtered probability space satisfies the \emph{usual conditions}: the filtration $\mathfrak{F}$ is right-continuous, and the probability measure $\mathbb{P}$ is complete, i.e., $\mathcal{F}_0$ (and hence every $\mathcal{F}_t$) contains all $\mathbb{P}$-null sets of $\mathcal{F}$.

Given two filtered measurable spaces $(\Omega,\mathcal{F},\mathfrak{F})$ and $(\tilde\Omega,\tilde{\mathcal{F}},\tilde{\mathfrak{F}})$, their \emph{product filtered measurable space} is defined as
\begin{align}\label{def:prod-filtered}
	(\boldsymbol{\Omega},\boldsymbol{\mathcal{F}},\boldsymbol{\mathfrak{F}}) := (\Omega \times \tilde\Omega, \mathcal{F} \otimes \tilde{\mathcal{F}}, \mathfrak{F} \otimes \tilde{\mathfrak{F}}),
\end{align}
where $\Omega \times \tilde\Omega$ is the Cartesian product, $\mathcal{F} \otimes \tilde{\mathcal{F}}$ is the product $\sigma$-algebra, and 
\[
\boldsymbol{\mathfrak{F}} = \mathfrak{F} \otimes \tilde{\mathfrak{F}} := \{\mathcal{F}_t \otimes \tilde{\mathcal{F}}_t\}_{t \ge 0}
\]
is the \emph{product filtration}. 
We adopt the following notational convention throughout the present work: boldface symbols such as $\boldsymbol{\mathcal{F}}, \boldsymbol{\mathfrak{F}}, \mathbf{P}$ denote objects associated with the product or coupling space, while regular symbols such as $\mathcal{F}, \mathfrak{F}, \mathbb{P}$ refer to the marginal spaces.

Given two $\mathbb{R}$-valued processes $S=\{S_t\}_{t\ge0}$ on $(\Omega,\mathcal{F},\mathfrak{F})$ and $\tilde S=\{\tilde S_t\}_{t\ge0}$ on $(\tilde\Omega,\tilde{\mathcal{F}},\tilde{\mathfrak{F}})$, we define their \emph{product} and \emph{sum} processes on the product filtered space $(\bs\Om,\bs{\mcl{F}},\bs{\mathfrak{F}})$ by
\[
(S \otimes \tilde S)_t(\omega, \tilde\omega) := S_t(\omega) \cdot \tilde S_t(\tilde\omega), 
\qquad
(S \oplus \tilde S)_t(\omega, \tilde\omega) := S_t(\omega) + \tilde S_t(\tilde\omega), 
\qquad (\omega, \tilde\omega) \in \Omega \times \tilde\Omega.
\]
In particular, $S \otimes 1$ denotes the process $S$ lifted from $(\Omega,\mathcal{F},\mathfrak{F})$ to the product space $(\bs\Om,\bs{\mcl{F}},\bs{\mathfrak{F}})$, and similarly $1 \otimes \tilde S$ denotes the lifting of $\tilde S$.

Finally, we briefly recall the notion of martingales, which plays a critical role in the definition of immersion couplings. 
Let $(\Omega,\mathcal{F},\mathfrak{F},\mathbb{P},\mathbb{E})$ be a filtered probability space. 
A real-valued process $M=\{M_t\}_{t\ge0}$ on $(\Omega,\mathcal{F},\mathfrak{F})$ is called a \emph{$(\mathfrak{F},\mathbb{P})$-martingale} (or \emph{$(\mathfrak{F},\mathbb{P},\mathbb{E})$-martingale}, to emphasize the expectation operator) if it holds for all $0\le s\le t$ that 
$M_t \in L^1(\mathbb{P})$, and
\[
\mathbb{E}[M_t \mid \mathcal{F}_s] = M_s, \qquad \mathbb{P}\text{-a.s.}
\]
Similarly, $(\mathfrak{F},\mathbb{P})$-submartingales and supermartingales are defined by replacing the equality with ``$\ge$'' and ``$\le$,'' respectively.  
We stress that the martingale property depends critically on both the filtration $\mathfrak{F}$ and the probability measure $\mathbb{P}$ (hence the associated expectation $\mathbb{E}$). 
In particular, given two probability measures $\mathbb{P}$ and $\mathbb{P}'$ on the same filtered measurable space $(\Omega,\mathcal{F},\mathfrak{F})$, a process $M$ may be a martingale under $(\mathfrak{F},\mathbb{P})$ but fail to be one under $(\mathfrak{F},\mathbb{P}')$.

\subsubsection{Immersion couplings and their equivalent formulations}
Let us now provide the definition of immersion couplings. Given two probability spaces $(\Omega, \mathcal{F}, \mathbb{P})$ and $(\tilde\Omega, \tilde{\mathcal{F}}, \tilde{\mathbb{P}})$, by a \emph{coupling} we mean a probability measure $\mathbf{P}$ on the product space $(\boldsymbol{\Omega}, \boldsymbol{\mathcal{F}}) = (\Omega \times \tilde\Omega, \mathcal{F} \otimes \tilde{\mathcal{F}})$ such that for all $E \in \mathcal{F}$ and $\tilde E \in \tilde{\mathcal{F}}$,
\begin{align}\label{eq:coupling}
	\mathbf{P}(E \times \tilde\Omega) = \mathbb{P}(E), \qquad
	\mathbf{P}(\Omega \times \tilde E) = \tilde{\mathbb{P}}(\tilde E).
\end{align}
To emphasize the expectation operator, we sometimes denote the coupling by $(\mathbf{P}, \mathbf{E})$.

\begin{definition}[Immersion couplings] \label{def:immersion}
	Consider two filtered probability spaces
	\[
	(\Omega, \mathcal{F}, \mathfrak{F}, \mathbb{P}, \mathbb{E}), 
	\qquad
	(\tilde\Omega, \tilde{\mathcal{F}}, \tilde{\mathfrak{F}}, \tilde{\mathbb{P}}, \tilde{\mathbb{E}}).
	\]	
	An \emph{immersion coupling} of $\mathbb{P}$ and $\tilde{\mathbb{P}}$ is a coupling $(\mathbf{P}, \mathbf{E})$ on the product filtered measurable space $(\boldsymbol{\Omega}, \boldsymbol{\mathcal{F}}, \boldsymbol{\mathfrak{F}})$ (see \eqref{def:prod-filtered}) such that the following holds: for all $t \ge 0$, any $\mathcal{F}_\infty$-measurable $L^1$-random variable $M: \Omega \to \mathbb{R}$, and any $\tilde{\mathcal{F}}_\infty$-measurable $L^1$-random variable $\tilde M: \tilde\Omega \to \mathbb{R}$,
	\begin{align}\label{eq:marg-condexp}
		\mathbf{E}[M \otimes 1 \mid \boldsymbol{\mathcal{F}}_t] = \mathbb{E}[M \mid \mathcal{F}_t] \otimes 1, \qquad
		\mathbf{E}[1 \otimes \tilde M \mid \boldsymbol{\mathcal{F}}_t] = 1 \otimes \tilde{\mathbb{E}}[\tilde M \mid \tilde{\mathcal{F}}_t].	
	\end{align}
\end{definition}

\begin{remark}
	When the initial $\sigma$-algebras are trivial, i.e., $\mathcal{F}_0$ contains only events of $\mathbb{P}$-probability $0$ or $1$ (and similarly for $\tilde{\mathcal{F}}_0$ under $\tilde{\mathbb{P}}$), the conditional expectation condition \eqref{eq:marg-condexp} at $t = 0$ reduces to
	\[
	\mathbf{E}[M \otimes 1] = \mathbb{E}[M], \qquad
	\mathbf{E}[1 \otimes \tilde M] = \tilde{\mathbb{E}}[\tilde M],
	\]
	which is equivalent to the marginal condition \eqref{eq:coupling}. Hence, the marginal condition is already contained in \eqref{eq:marg-condexp} in this case.
\end{remark}
In the literature, e.g., \cite{kendall2015coupling,KendallMajkaMijatovic2024}, immersion couplings are often characterized through \emph{martingale preservation property} stated as follows.

\begin{definition}[Martingale preservation]
	Assume the setting of Definition \ref{def:immersion}. A coupling $(\mathbf{P}, \mathbf{E})$ is said to \emph{preserve martingales} if:
	\begin{itemize}
		\item If $\{M_t\}_{t\ge0}$ is a $(\mathfrak{F}, \mathbb{P})$-martingale, then $\{M_t \otimes 1\}_{t\ge0}$ is a $(\boldsymbol{\mathfrak{F}}, \mathbf{P})$-martingale.
		\item If $\{\tilde M_t\}_{t\ge0}$ is a $(\tilde{\mathfrak{F}}, \tilde{\mathbb{P}})$-martingale, then $\{1 \otimes \tilde M_t\}_{t\ge0}$ is a $(\boldsymbol{\mathfrak{F}}, \mathbf{P})$-martingale.
	\end{itemize}
\end{definition}

The following result shows that the conditional expectation formulation of immersion couplings is equivalent to the martingale preservation property commonly used in the literature.

\begin{proposition}\label{prop:immersion}
	Let $(\Omega, \mathcal{F}, \mathfrak{F}, \mathbb{P}, \mathbb{E})$ and $(\tilde\Omega, \tilde{\mathcal{F}}, \tilde{\mathfrak{F}}, \tilde{\mathbb{P}}, \tilde{\mathbb{E}})$ be two filtered probability spaces. A coupling $(\mathbf{P}, \mathbf{E})$ on the product filtered space $(\boldsymbol{\Omega}, \boldsymbol{\mathcal{F}}, \boldsymbol{\mathfrak{F}})$ is an immersion coupling of $\mathbb{P}$ and $\tilde{\mathbb{P}}$ if and only if it preserves martingales.
\end{proposition}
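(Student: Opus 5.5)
We prove the two implications separately, each time by unpacking the definitions. The only external tool is the standard fact that an $L^1$ random variable $M$ generates the closed (uniformly integrable) martingale $M_t := \mathbb{E}[M\mid\mathcal{F}_t]$.

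\emph{Immersion $\Rightarrow$ martingale preservation.} Let $\{M_t\}$ be an $(\mathfrak{F},\mathbb{P})$-martingale and fix $0\le s\le t$. Since $M_t$ is $\mathcal{F}_t\subset\mathcal{F}_\infty$-measurable and lies in $L^1(\mathbb{P})$, we apply \eqref{eq:marg-condexp} at time $s$ with the random variable $M_t$. This gives
\[
\mathbf{E}[M_t\otimes 1\mid \boldsymbol{\mathcal{F}}_s]=\mathbb{E}[M_t\mid\mathcal{F}_s]\otimes 1 = M_s\otimes 1 .
\]
It remains to check adaptedness and integrability. The process $M_t\otimes 1$ is $\mathcal{F}_t\otimes\tilde{\mathcal{F}}_t$-measurable. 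It is integrable because the marginal condition \eqref{eq:coupling} gives $\mathbf{E}|M_t\otimes 1|=\mathbb{E}|M_t|<\infty$. Hence $\{M_t\otimes1\}$ is an $(\boldsymbol{\mathfrak{F}},\mathbf{P})$-martingale. The $\tilde M$ side is symmetric.

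\emph{Martingale preservation $\Rightarrow$ immersion.} Fix an $\mathcal{F}_\infty$-measurable $M\in L^1(\mathbb{P})$ and set $M_t:=\mathbb{E}[M\mid\mathcal{F}_t]$. This is an $(\mathfrak{F},\mathbb{P})$-martingale, so by hypothesis $M_t\otimes 1$ is an $(\boldsymbol{\mathfrak{F}},\mathbf{P})$-martingale. We need $\mathbf{E}[M\otimes1\mid\boldsymbol{\mathcal{F}}_t]=M_t\otimes1$, so the task is to connect $M\otimes 1$ with the terminal value of this martingale. By L\'evy's upward theorem, $M_t\to \mathbb{E}[M\mid\mathcal{F}_\infty]=M$ $\mathbb{P}$-a.s. and in $L^1(\mathbb{P})$ as $t\to\infty$ along integers. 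Via the marginal condition this transfers to $M_t\otimes1\to M\otimes1$ in $L^1(\mathbf{P})$: the relevant events and integrals depend only on the first coordinate, and the $\mathbf{P}$-null sets they produce are of the form $N\times\tilde\Omega$ with $\mathbb{P}(N)=0$. For $u\ge t$ the martingale property gives $\mathbf{E}[M_u\otimes1\mid\boldsymbol{\mathcal{F}}_t]=M_t\otimes1$. Letting $u\to\infty$ and using the $L^1$-continuity of conditional expectation yields the claim. The $\tilde M$ side is identical.

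\emph{Main obstacle.} The step needing care is the second direction. There one must make sure that $\mathcal{F}_\infty$-measurability is exactly what makes $M$ the limit of $M_t$, and that the convergence and null sets transfer correctly from $\mathbb{P}$ to $\mathbf{P}$. The completions in the usual conditions could cause trouble here. We handle this by working with $\mathcal{F}_\infty=\sigma(\cup_t\mathcal{F}_t)$ and noting that a $\mathbb{P}$-null set $N$ lifts to the $\mathbf{P}$-null set $N\times\tilde\Omega$, which lies in $\boldsymbol{\mathcal{F}}_0$ up to completion. Once this is settled, the rest is routine.
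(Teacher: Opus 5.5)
Your proposal is correct and follows the paper's proof essentially step for step. The forward direction applies \eqref{eq:marg-condexp} to $M_t$ at time $s$, and the converse applies martingale preservation to the closed martingale $\mathbb{E}[M\mid\mathcal{F}_t]$, then passes to the limit via the martingale convergence theorem and $L^1$-continuity of conditional expectation. Your additional checks (integrability of $M_t\otimes 1$, and the transfer of $L^1(\mathbb{P})$-convergence to $L^1(\mathbf{P})$ through the marginal condition) are details the paper leaves implicit, and you handle them correctly.
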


\begin{remark}
	The conditional expectation formulation of immersion couplings given in Definition \ref{def:immersion} offers several conceptual advantages over the traditional martingale preservation condition. First, it provides a clear symbolic generalization of the usual coupling condition: when the filtrations are trivial (i.e., $\mathcal{F}_0 = \{\emptyset, \Omega\}$ and $\tilde{\mathcal{F}}_0 = \{\emptyset, \tilde\Omega\}$), the condition reduces the standard marginal coupling condition \eqref{eq:coupling}. Hence, an immersion coupling can be viewed as a coupling that respects the information structure encoded by the filtrations. Second, the explicit appearance of conditional expectations reveals how the coupling interacts with the flow of information over time, making the immersion property more transparent and directly applicable in proofs. In contrast, the martingale preservation condition, while equivalent, is less intuitive and its connection to the marginal condition is less immediate. The formulation via conditional expectations thus better exposes the structural core of the notion.
\end{remark}

\begin{proof}
	\textit{Immersion implies martingale preservation.} 
	Assume that $(\mathbf{P}, \mathbf{E})$ is an immersion coupling, i.e., it satisfies the conditional expectation condition \eqref{eq:marg-condexp}. We show that it preserves martingales.
	
	Let $\{M_t\}_{t\ge 0}$ be a $(\mathfrak{F}, \mathbb{P})$-martingale. For any fixed $t \ge 0$, $M_t$ is $\mathcal{F}_\infty$-measurable and $M_t \in L^1(\mathbb{P})$. Applying the conditional expectation condition with $M = M_t$ gives, for any $s \le t$,
	\[
	\mathbf{E}[M_t \otimes 1 \mid \boldsymbol{\mathcal{F}}_s] = \mathbb{E}[M_t \mid \mathcal{F}_s] \otimes 1 = M_s \otimes 1.
	\]
	Hence $\{M_t \otimes 1\}_{t\ge 0}$ is a $(\boldsymbol{\mathfrak{F}}, \mathbf{P})$-martingale. The same argument for martingales under $\tilde{\mathbb{P}}$ shows that $\mathbf{P}$ preserves martingales.
	
	\textit{Martingale preservation implies immersion.} 
	Conversely, suppose $\mathbf{P}$ preserves martingales. For any $t \ge 0$ and any $\mathcal{F}_\infty$-measurable $M \in L^1(\mathbb{P})$, define $M_s := \mathbb{E}[M \mid \mathcal{F}_s]$ for $s \ge 0$. Then $\{M_s\}_{s\ge 0}$ is a $(\mathfrak{F}, \mathbb{P})$-martingale. By the martingale preservation property, $\{M_s \otimes 1\}_{s\ge 0}$ is a $(\boldsymbol{\mathfrak{F}}, \mathbf{P})$-martingale. Hence, for any $T \ge t$,
	\[
	\mathbf{E}[M_T \otimes 1 \mid \boldsymbol{\mathcal{F}}_t] = M_t \otimes 1 = \mathbb{E}[M \mid \mathcal{F}_t] \otimes 1.
	\]
	As $T \to \infty$, $M_T \to \mathbb{E}[M \mid \mathcal{F}_\infty] = M$ almost surely and in $L^1(\mathbb{P})$ by the martingale convergence theorem. By the $L^1$-continuity of conditional expectations, we obtain
	\[
	\mathbf{E}[M \otimes 1 \mid \boldsymbol{\mathcal{F}}_t] = \lim_{T \to \infty} \mathbf{E}[M_T \otimes 1 \mid \boldsymbol{\mathcal{F}}_t] = \mathbb{E}[M \mid \mathcal{F}_t] \otimes 1.
	\]
	The analogous identity for $\tilde M \in L^1(\tilde{\mathbb{P}})$ follows by the same argument. Thus $\mathbf{P}$ satisfies the conditional expectation condition \eqref{eq:marg-condexp} and is therefore an immersion coupling.
\end{proof}

\subsubsection{Markovian couplings} 
A particularly important class of immersion couplings arises when the two processes are \emph{Markovian}. Let $(\Omega, \mathcal{F}, \mathfrak{F})$ be the canonical path space of cadlag functions from $\mathbb{R}_+$ to $\Pi$, carrying the coordinate process $\{X_t\}_{t\ge 0}$ (see the discussion above). Let $\{(\mathbb{P}^x, \mathbb{E}^x)\}_{x\in\Pi}$ be a family of \emph{(time homogeneous) Markov measures}; that is, for each $x \in \Pi$, any bounded measurable function $f:\Om\to\mbr$ and $t\ge 0$,
\[
\mathbb{P}^x(X_0 = x) = 1,\qquad \mathbb{E}^x\bigl[ f \circ \Theta_t \mid \mathcal{F}_t \bigr] = \mathbb{E}^{X_t}[f] \quad \mathbb{P}^x\text{-a.s.},
\]
where $\Theta_t:\Om\to\Om$ is the shift operator $(\Theta_t \omega)(s) = \omega(t+s)$. 
For any initial distribution $\mu \in \mathcal{P}(\Pi)$, the \emph{Markov measure with initial distribution $\mu$} is defined as
\[
\mathbb{P}^\mu := \int_{\Pi} \mathbb{P}^x \, \mu(dx), \qquad 
\mathbb{E}^\mu := \int_{\Pi} \mathbb{E}^x \, \mu(dx),
\]
so that in particular $(\mathbb{P}^x, \mathbb{E}^x) = (\mathbb{P}^{\delta_x}, \mathbb{E}^{\delta_x})$.

Assume the setting above. 
Let $(\tilde\Omega, \tilde{\mathcal{F}}, \tilde{\mathfrak{F}})$ be another filtered measurable space carrying a $\tilde\Pi$-valued process $\{Y_t\}_{t\ge 0}$, and let $\{(\tilde{\mathbb{P}}^y, \tilde{\mathbb{E}}^y)\}_{y\in \tilde\Pi}$ be a family of Markov measures on this space, defined analogously.
A \emph{(time homogeneous) Markovian coupling} is a family of \emph{Markov} measures $\{\mathbf{P}^{(x,y)}\}_{(x,y)\in\Pi\times\tilde\Pi}$ on the product filtered space $(\boldsymbol{\Omega}, \boldsymbol{\mathcal{F}}, \boldsymbol{\mathfrak{F}})$ such that for each $(x,y)$, the marginal laws of $\mathbf{P}^{(x,y)}$ are $\mathbb{P}^x$ and $\tilde{\mathbb{P}}^y$; i.e., for all $E \in \mathcal{F}$, $\tilde E \in \tilde{\mathcal{F}}$,
\[
\mathbf{P}^{(x,y)}(E \times \tilde\Omega) = \mathbb{P}^x(E), \qquad 
\mathbf{P}^{(x,y)}(\Omega \times \tilde E) = \tilde{\mathbb{P}}^y(\tilde E).
\]
Specifically, the coupling requires that the coupled process $\{(X_t, Y_t)\}_{t\ge 0}$ is Markovian with respect to the product filtration $\boldsymbol{\mathfrak{F}}$, while each marginal process preserves its original Markov law $\mathbb{P}^x$ and $\tilde{\mathbb{P}}^y$, respectively.

Every Markovian coupling is in particular an immersion coupling, i.e., $\mathbf{P}^{(x,y)}$ is an immersion coupling of $\mathbb{P}^x$ and $\tilde{\mathbb{P}}^y$ for all $(x,y) \in \Pi \times \tilde\Pi$. This follows as a direct consequence of the Markov property. More generally, any convex combination (or integral) of Markovian couplings remains an immersion coupling, but such a mixture is typically not Markovian. Hence, immersion couplings form a strictly larger class than Markovian couplings. In fact, one can construct more sophisticated immersion couplings, for instance by ``time gluing'': following one Markovian coupling up to a fixed time $T$, and then switching to another thereafter. The resulting process is still an immersion coupling (by the linearity of the martingale preservation property), and it is in fact Markovian (satisfying the Markov property), but it is generally time-inhomogeneous unless the two couplings coincide.

If the two Markov processes admit a generator description (e.g., they are Feller processes), then the marginal condition for a Markovian coupling can be expressed at the level of generators. Specifically, let $\mathcal{A}$ be the infinitesimal generator of $\{\mathbb{P}^x\}_{x\in\Pi}$ and $\mathcal{B}$ be that of $\{\tilde{\mathbb{P}}^y\}_{y\in\tilde\Pi}$. Then a Markovian coupling $\{\mathbf{P}^{(x,y)}\}_{(x,y)\in\Pi\times\tilde\Pi}$ has generator $\mathcal{J}$ satisfying the following marginal conditions: for all $\varphi \in \mathcal{D}(\mathcal{A})$ and $\psi \in \mathcal{D}(\mathcal{B})$,
\[
\mathcal{J}(\varphi \otimes 1) = (\mathcal{A}\varphi) \otimes 1, \qquad 
\mathcal{J}(1 \otimes \psi) = 1 \otimes (\mathcal{B}\psi).
\]
In other words, the generator of the coupled process acts on functions that depend only on $X$ (resp. $\tilde X$) as the generator of the marginal process. This generator formulation of coupling processes was adopted in \cite{Chen1994, Chen2020OptimalCouplings} in the study of optimal couplings between diffusion processes and jump processes.

\subsection{Optimality of immersion couplings with respect to a cost}
We now introduce two optimality conditions for immersion couplings, based on optimal transport. \emph{Local optimality} extends the generator-based condition \eqref{def:gen-opt} from \cite{Chen1994, KangLim2025}, while \emph{global optimality} follows the formulation of \cite{KendallMajkaMijatovic2024}.

\subsubsection{Local and global $c$-optimality}
Throughout this section, fix two (Polish) state spaces $\Pi$ and $\tilde\Pi$ and a \emph{lower semicontinuous cost function} $c: \Pi \times \tilde\Pi \to \mathbb{R}_+$. 
Consider two Markov processes. 
Let $(\Omega, \mathcal{F}, \mathfrak{F})$ be a filtered measurable space carrying a $\Pi$-valued process $\{X_t\}_{t\ge 0}$ and $(\tilde\Om,\tilde{\mcl{F}},\tilde{\mathfrak{F}})$ be another filtered space carrying a $\tilde\Pi$-valued process $\{Y_t\}_{t\ge 0}$. 
Let $\{(\mbp^x,\mbe^x)\}_{x\in\Pi}$, $\{(\tilde \mbp^y,\tilde\mbe^y)\}_{y\in\tilde \Pi}$ be two family of Markov measures on the two spaces.

\begin{definition}[Optimality of immersion couplings]\label{def:optimal-immersion}
	Assume the setting introduced above. Let $\mu \in \mathcal{P}(\Pi)$ and $\nu \in \mathcal{P}(\tilde\Pi)$ be two initial distributions, and let $(\mathbf{P}_*, \mathbf{E}_*)$ be an immersion coupling of $\mathbb{P}^\mu$ and $\tilde{\mathbb{P}}^{\nu}$.
	
	\begin{enumerate}[label=(\roman*)]
		\item $(\mathbf{P}_*, \mathbf{E}_*)$ is called \emph{locally $c$-optimal} if there is a measurable function $\om:\Pi\times\tilde\Pi\to\mbr$ such that the following holds:
		\begin{itemize}
			\item For every immersion coupling $(\mathbf{P}, \mathbf{E})$ of $\mathbb{P}^\mu$ and $\tilde{\mathbb{P}}^{\nu}$, the process $\{S_t\}_{t\ge 0}$ defined as follows is a $(\boldsymbol{\mathfrak{F}}, \mathbf{P})$-submartingale.
			\begin{align}\label{def:inf-mart1}
				S_t := c(X_t, Y_t) - \int_0^t \omega(X_s, Y_s) \, ds.	
			\end{align}
			\item $\{S_t\}_{t\ge 0}$ is a $(\boldsymbol{\mathfrak{F}}, \mathbf{P}_*)$-martingale.
		\end{itemize}
		
		\item $(\mathbf{P}_*, \mathbf{E}_*)$ is called \emph{globally $c$-optimal} if for every immersion coupling $(\mathbf{P}, \mathbf{E})$ of $\mathbb{P}^\mu$ and $\tilde{\mathbb{P}}^{\nu}$ and all $0 \le s \le t$,
		\[
		\mathbf{E}_*\bigl[ c(X_t, Y_t) \mid X_s, Y_s \bigr] \le \mathbf{E}\bigl[ c(X_t, Y_t) \mid X_s, Y_s \bigr] \quad \mathbf{P}\text{-a.s.}
		\]
	\end{enumerate}
\end{definition}

Local optimality is a natural extension of the generator optimality condition \eqref{def:gen-opt}, as introduced in \cite{Chen2020OptimalCouplings, KangLim2025}, to the process level. In a locally optimal coupling, there exists a measurable function $\omega$ (the minimal drift) such that the process \eqref{def:inf-mart1} is a submartingale for every immersion coupling and a martingale for the optimal one. Equivalently, for any immersion coupling $\mathbf{P}$, the lower right Dini derivative of the conditional expected cost satisfies
\begin{align}\label{bdd:derivative-lower}
	\liminf_{h \searrow 0} \frac{\mathbf{E}[c(X_{t+h}, Y_{t+h}) \mid \boldsymbol{\mathcal{F}}_t] - c(X_t, Y_t)}{h}
	\;\ge\; \omega(X_t, Y_t) \quad \mathbf{P}\text{-a.s.},
\end{align}
with equality holding $\mathbf{P}_*$-a.s. for the optimal coupling. Thus, the drift is minimized at every instant. 
Global optimality, on the other hand, was introduced in \cite{KendallMajkaMijatovic2024} as the standard notion of optimality for immersion couplings, requiring global minimization of conditional expected costs over all future times.

The relationship between local and global optimality is subtle. In many settings, global optimality implies local optimality (though we do not prove this claim here). However, global optimality is a much more restrictive condition, and its existence is not guaranteed in general. For example, \cite{KendallMajkaMijatovic2024} construct a globally $c$-optimal coupling for two identical L\'evy processes on $\mathbb{R}$ with finite activity and unimodal jump distributions, where the cost function $c$ is concave (e.g., $c(x,y) = |x-y|^\gamma$ for $\gamma \in (0,1)$). The coupling they obtain is also locally optimal. Notably, their construction requires the two processes to be identical; the case of two different L\'evy processes is not addressed. For the convex case $c(x,y) = |x-y|^\gamma$ with $\gamma \ge 1$, as pointed out in \cite{KendallMajkaMijatovic2024} for L\'evy processes, the optimal coupling for two identical processes is synchronous (see \cite{JackaMijatovic2015} for the proof in the Brownian motion setting). Beyond these special classes, global optimality may fail to exist due to its strong requirements, whereas local optimality remains a well-defined and often attainable notion. Thus, local optimality is more suitable for studying couplings in broader settings (including non-identical L\'evy processes), as it only requires the minimization of the infinitesimal drift — a condition that is both weaker and more tractable.

\subsubsection{Half-relaxed $c$-transport derivative and local $c$-optimality}
To make the notion of local optimality practical, we need a way to identify the minimal drift function $\omega$ that appears in Definition \ref{def:optimal-immersion}(i). This function should capture the infinitesimal behavior of the optimal transport cost between the time-$t$ marginal laws of the two Markov processes. To this end, we introduce the \emph{half-relaxed transport derivative}, which serves as the canonical candidate for the minimal drift under suitable regularity conditions. \TS{other candidate?}

Recall the setting above, where $c:\Pi\times\tilde\Pi\to\mbr_+$ is a lower semicontinuous cost. 
For any two probability measures $\mu \in \mathcal{P}(\Pi)$ and $\nu \in \mathcal{P}(\tilde\Pi)$, we denote the \emph{$c$-optimal transport cost} by
\[
\mathcal{C}_c(\mu, \nu) := \inf_{\gamma \in \Gamma(\mu, \nu)} \int_{\Pi \times \tilde\Pi} c(x, y) \, d\gamma(x, y),
\]
where $\Gamma(\mu, \nu)$ is the set of couplings between $\mu$ and $\nu$.
For $x \in \Pi$ and $y \in \tilde\Pi$, denote the marginal laws at time $t$ by
\begin{align}\label{def:markov-law}
	\mu_t^x := \operatorname{law}^x(X_t) = (X_t)_\sharp \mathbb{P}^x, \qquad 
	\nu_t^y := \operatorname{law}^y(Y_t) = (Y_t)_\sharp \tilde{\mathbb{P}}^y.
\end{align}
Thus $\mu_t^x \in \mathcal{P}(\Pi)$ and $\nu_t^y \in \mathcal{P}(\tilde\Pi)$ for all $t \ge 0$, $x \in \Pi$, $y \in \tilde\Pi$. The \emph{half-relaxed (lower) $c$-transport derivative} between $\{X_t\}$ and $\{Y_t\}$ is defined by
\begin{align}\label{def:half-relaxed}
	\omega_c^-(x, y) := \liminf_{\substack{t \searrow 0 \\ (x',y') \to (x,y)}} \frac{\mathcal{C}_c(\mu_t^{x'}, \nu_t^{y'}) - c(x',y')}{t}.	
\end{align}
In words, $\omega_c^-(x,y)$ is the infimum over all sequences $(t_n, x_n, y_n)$ with $t_n \searrow 0$ and $(x_n, y_n) \to (x,y)$ of the $\liminf$ of the difference quotient.

\begin{remark}
	The notion of \emph{transport derivatives} was first introduced in \cite{LimTeoh2025, KangLim2025}, where the pointwise version (i.e., the limit taken over a fixed $(x,y)$) was studied. Other relevant notions—such as \emph{dual}, \emph{L\'evy} transport derivatives—will be introduced in the coming sections. A key question is whether these different notions coincide under suitable conditions.
\end{remark}

Let us assume the two Markov processes satisfy the following assumption.

\begin{assumptionC}\label{asp:C}
	There exist continuous functions $f: \Pi \to \mathbb{R}$ and $g: \tilde\Pi \to \mathbb{R}$ such that the following holds:
	\begin{enumerate}[label=(C\arabic*)]
		\item \emph{One-sided Lipschitz bound:} For all sufficiently small $t \in [0,1]$ and all $(x,y) \in \Pi \times \tilde\Pi$,
		\[
		\frac{\mathcal{C}_c(\mu_t^x, \nu_t^y) - c(x,y)}{t} \ge f(x) + g(y),
		\]
		where $\mu_t^x,\nu_t^{y}$ is the law of the two Markov processes, given in \eqref{def:markov-law}.
		
		\item \label{C2} \emph{Supremum integrability condition:} For any finite times $0 \le s \le t < \infty$,
		\begin{align*}
			\mathbb{E}^x\left[ \sup_{s \le u \le t} |f(X_u)| \right] < \infty, \qquad 
			\tilde{\mathbb{E}}^y\left[ \sup_{s \le u \le t} |g(Y_u)| \right] < \infty.
		\end{align*}
	\end{enumerate}
\end{assumptionC}

The following theorem shows that, under Assumption \ref{asp:C}, the half-relaxed transport derivative $\omega_c^-$ provides a universal lower bound for the infinitesimal drift of the cost process under any immersion coupling. As a consequence, an immersion coupling that attains this bound — i.e., makes the process $\{S_t\}_{t\ge 0}$ a martingale — is both locally $c$-optimal.

\begin{theorem}\label{thm:opt-immersion}
	Assume the setting in Definition \ref{def:optimal-immersion}.
	Let $\mu \in \mathcal{P}(\Pi)$ and $\nu \in \mathcal{P}(\tilde\Pi)$ be two initial distributions. Then for any immersion coupling $(\boldsymbol{\Omega}, \boldsymbol{\mathcal{F}}, \boldsymbol{\mathfrak{F}}, \mathbf{P})$ of the Markov measures $\mathbb{P}^\mu$ and $\tilde{\mathbb{P}}^{\nu}$, the following process is a $(\boldsymbol{\mathfrak{F}}, \mathbf{P})$-submartingale:
	\begin{align}\label{def:inf-mart2}
		S_t:= c(X_t, Y_t) - \int_0^t \omega_c^-(X_s, Y_s) \, ds.	
	\end{align}
\end{theorem}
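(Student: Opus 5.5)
The plan is to reduce the submartingale property to a one-step inequality and then integrate it in time. Fix $0\le s<t$ and a set $A\in\boldsymbol{\mathcal{F}}_s$. We must show
\[
\mathbf{E}\bigl[(S_t-S_s)\mathbbm{1}_A\bigr]\ge 0 .
\]
Integrability of $S_t$ comes from Assumption \ref{asp:C}: $\omega_c^-\ge f\oplus g$ by (C1) and lower semicontinuity of $f,g$ (they are continuous, so the liminf in \eqref{def:half-relaxed} preserves the bound). Hence $\int_0^t(\omega_c^-)^-(X_u,Y_u)\,du$ is controlled by $\sup_u|f(X_u)|+\sup_u|g(Y_u)|$, which is integrable by (C2) and the coupling marginals. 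The positive part is handled by monotone convergence, or one allows $+\infty$ and the inequality is read appropriately. Measurability of $\omega_c^-$ holds since a half-relaxed liminf is lower semicontinuous.

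\textbf{Key one-step estimate.} For $u\ge 0$ and $h>0$ small, I will show
\[
\mathbf{E}\bigl[c(X_{u+h},Y_{u+h})\mid\boldsymbol{\mathcal{F}}_u\bigr]\ge \mathcal{C}_c\bigl(\mu_h^{X_u},\nu_h^{Y_u}\bigr)\quad\mathbf{P}\text{-a.s.}
\]
This is where the immersion property is used. By Definition \ref{def:immersion} applied to $M=\varphi(X_{u+h})$ with $\varphi$ bounded, together with the Markov property,
\[
\mathbf{E}\bigl[\varphi(X_{u+h})\mid\boldsymbol{\mathcal{F}}_u\bigr]=\mathbb{E}^\mu\bigl[\varphi(X_{u+h})\mid\mathcal{F}_u\bigr]=\int\varphi\,d\mu_h^{X_u},
\]
and similarly for $Y$. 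Taking a regular conditional distribution of $(X_{u+h},Y_{u+h})$ given $\boldsymbol{\mathcal{F}}_u$ (state spaces are Polish), its marginals are a.s. $\mu_h^{X_u}$ and $\nu_h^{Y_u}$. It is therefore a coupling of these laws, and its $c$-cost is at least $\mathcal{C}_c$.

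Combining this with (C1) gives
\[
\mathbf{E}\bigl[c(X_{u+h},Y_{u+h})-c(X_u,Y_u)\mid\boldsymbol{\mathcal{F}}_u\bigr]\ge h\,q_h(X_u,Y_u),
\]
where
\[
q_h(x,y):=\frac{\mathcal{C}_c(\mu_h^x,\nu_h^y)-c(x,y)}{h}\ge f(x)+g(y).
\]

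\textbf{Telescoping and passing to the limit.} Partition $[s,t]$ into $n$ steps $u_k=s+k h$ with $h=(t-s)/n$. Summing and conditioning with $A$ yields
\[
\mathbf{E}\bigl[(c(X_t,Y_t)-c(X_s,Y_s))\mathbbm{1}_A\bigr]\ge \mathbf{E}\Bigl[\mathbbm{1}_A\int_s^t q_h\bigl(X_{\lfloor u\rfloor_h},Y_{\lfloor u\rfloor_h}\bigr)\,du\Bigr],
\]
where $\lfloor u\rfloor_h$ denotes the left grid point. As $n\to\infty$, $h\searrow0$ and, by right-continuity of paths, $(X_{\lfloor u\rfloor_h},Y_{\lfloor u\rfloor_h})\to(X_u,Y_u)$ for every $u$. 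By the very definition of the half-relaxed liminf \eqref{def:half-relaxed},
\[
\liminf_n q_h\bigl(X_{\lfloor u\rfloor_h},Y_{\lfloor u\rfloor_h}\bigr)\ge\omega_c^-(X_u,Y_u)
\]
pointwise. This is exactly why the relaxed version is used.

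Fatou's lemma then applies to $q_h-(f\oplus g)(X_{\lfloor u\rfloor_h},Y_{\lfloor u\rfloor_h})\ge0$. The subtracted terms are dominated by $\sup_{[s,t]}|f(X)|+\sup_{[s,t]}|g(Y)|$, which is integrable by (C2), and they converge by continuity of $f,g$ and right-continuity, so dominated convergence handles them. The conclusion is
\[
\mathbf{E}\bigl[(c(X_t,Y_t)-c(X_s,Y_s))\mathbbm{1}_A\bigr]\ge\mathbf{E}\Bigl[\mathbbm{1}_A\int_s^t\omega_c^-(X_u,Y_u)\,du\Bigr],
\]
which is the submartingale inequality.

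\textbf{Main obstacle.} I expect the main difficulty to be the rigorous justification of the conditional-coupling step. This requires identifying the conditional marginal laws via \eqref{eq:marg-condexp} combined with the Markov property under $\mathbb{P}^\mu$, and ensuring measurability of $(x,y)\mapsto\mathcal{C}_c(\mu_h^x,\nu_h^y)$. For the latter I would use lower semicontinuity of $\mathcal{C}_c$ under weak convergence, or simply work with the inequality against the regular conditional law directly, so that measurability of $\mathcal{C}_c$ is not needed.

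A secondary technical point is integrability of $c(X_t,Y_t)$, which is needed for the submartingale definition. I would either assume it or note that the inequalities hold in the extended sense and $L^1$ follows whenever $\mathbf{E}\,c(X_t,Y_t)<\infty$.
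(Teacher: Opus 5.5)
Your proposal is correct and follows the paper's proof essentially step for step: the one-step bound $\mathbf{E}[c(X_{u+h},Y_{u+h})\mid\boldsymbol{\mathcal{F}}_u]\ge\mathcal{C}_c(\mu_h^{X_u},\nu_h^{Y_u})$, telescoping over a uniform grid, the half-relaxed liminf, and Fatou's lemma with the lower bound from (C1)--(C2). The only real difference is that you get the one-step bound from a regular conditional distribution rather than from Kantorovich duality as in Lemma~\ref{lem:Xi-lem}. One correction is needed: the left grid points approach $u$ from below, so $(X_{\lfloor u\rfloor_h},Y_{\lfloor u\rfloor_h})\to(X_{u-},Y_{u-})$ because left limits exist (not by right-continuity), and this equals $(X_u,Y_u)$ only off the countable set of jump times, i.e.\ for Lebesgue-a.e.\ $u$ rather than every $u$, which is all that Fatou's lemma on $[s,t]\times\boldsymbol{\Omega}$ requires.
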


\subsection{Optimal immersion couplings of L\'evy processes w.r.t the quadratic cost}

The main result of this paper concerns the optimal immersion coupling of two L\'evy processes $\{X_t\}_{t\ge 0}$ and $\{Y_t\}_{t\ge 0}$ on $\mathbb{R}^d$ with respect to the \emph{quadratic cost function} $c: \mathbb{R}^d \times \mathbb{R}^d \to \mathbb{R}_+$,
\begin{align}\label{def:quad-cost}
	c(x,y) := \frac12 |x-y|^2.	
\end{align}

A L\'evy process is a Markov process whose Markov measures are translation-invariant: for any $x \in \mathbb{R}^d$,
\[
\mathbb{P}^x (X_t \in E) = \mathbb{P}^0 (X_t + x \in E),
\]
i.e., the law of the process starting from $x$ is simply the law of the process starting from $0$, shifted by $x$. 
(See Section \ref{sec:2} for more details.)
Consequently, the entire family of Markov measures $\{\mathbb{P}^x\}_{x\in\mathbb{R}^d}$ is uniquely determined by $\mathbb{P}^0$, the law of the process starting at the origin. The same holds for $\tilde{\mathbb{P}}^y$ in terms of $\tilde{\mathbb{P}}^0$.

Our main result establishes the existence of a \emph{L\'evy coupling} for any two (possibly different) L\'evy processes with finite second moments that is both locally and globally $c$-optimal among all immersion couplings. 
As mentioned earlier, the existence of such couplings was first established in \cite{KangLim2025}, where optimality was shown in the generator/semigroup formulation; hence, that coupling is both locally and globally $c$-optimal among all \emph{Markovian (Feller) couplings}. We now extend this optimality to the larger class of all immersion couplings.

\begin{theorem}\label{main}
	Let $\{X_t\}_{t\ge 0}$ and $\{Y_t\}_{t\ge 0}$ be two L\'evy processes on $\mathbb{R}^d$ with finite second moments, defined on filtered probability spaces 
	\(
	(\Omega, \mathcal{F}, \mathfrak{F}, \mathbb{P}^0),
	(\tilde\Omega, \tilde{\mathcal{F}}, \tilde{\mathfrak{F}}, \tilde{\mathbb{P}}^0)
	\)
	respectively. 
	Let $c$ be the quadratic cost defined as in \eqref{def:quad-cost}.
	There exists an immersion coupling $\mathbf{P}_*^{(0,0)}$ of $\mathbb{P}^0$ and $\tilde{\mathbb{P}}^0$ such that:
	\begin{enumerate}[label=(\roman*)]
		\item The coupled process $\{(X_t, Y_t)\}_{t\ge 0}$ is a L\'evy process under $\mathbf{P}_*^{(0,0)}$.
		\item $\mathbf{P}_*^{(0,0)}$ is both locally and globally $c$-optimal.
	\end{enumerate}
\end{theorem}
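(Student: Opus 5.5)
Our plan is to build the optimal coupling from an optimal \emph{L\'evy coupling} and then use Theorem~\ref{thm:opt-immersion} to pass from local optimality to comparison with arbitrary immersion couplings.

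\textbf{Step 1 (optimal L\'evy triplet).} Let $(b,\Sigma,\nu)$ and $(\tilde b,\tilde\Sigma,\tilde\nu)$ be the L\'evy triplets of $X$ and $Y$. Consider L\'evy triplets on $\mathbb{R}^{2d}$ with these marginals. This means a drift $(b,\tilde b)$, a joint covariance with diagonal blocks $\Sigma,\tilde\Sigma$, and a L\'evy measure $\pi$ on $\mathbb{R}^{2d}\setminus\{0\}$ whose projections agree with $\nu,\tilde\nu$ off the origin. Mass of $\pi$ may sit on the axes $\{0\}\times\mathbb{R}^d$ and $\mathbb{R}^d\times\{0\}$.

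For such a triplet, applying the joint generator to $c$ at $(x,y)$ gives
\[
\mathcal{J}c(x,y)=\langle x-y,b-\tilde b\rangle+\tfrac12\mathrm{tr}(\Sigma+\tilde\Sigma-2\Sigma_{12})+\tfrac12\int|z-\tilde z|^2\,\pi(dz,d\tilde z).
\]
Only the last two terms depend on the coupling. The constraints are: the covariance must be positive semidefinite with the prescribed blocks, and $\pi$ must have the prescribed projections. Minimizing the trace term is classical Gaussian optimal transport. Minimizing the jump term is an optimal transport problem with possibly infinite masses, which we handle by truncation and tightness, using finite second moments.

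We would invoke the existence result (the optimal L\'evy coupling of \cite{KangLim2025}, re-proved later in the paper) to get a minimizer $\Lambda_*$ with value $\omega_*(x,y)=\langle x-y,b-\tilde b\rangle+\kappa_*$, where $\kappa_*$ is a constant. The associated L\'evy process on $\mathbb{R}^{2d}$ started at $(0,0)$ defines $\mathbf{P}_*^{(0,0)}$, which gives (i). Because this is a Markovian coupling, it is an immersion coupling. Since $c$ is quadratic and the second moments are finite, Dynkin's formula shows that $c(X_t,Y_t)-\int_0^t\omega_*(X_s,Y_s)\,ds$ is a $\mathbf{P}_*$-martingale.

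\textbf{Step 2 (identify $\omega_c^-=\omega_*$ and check Assumption~\ref{asp:C}).} This is the main obstacle. Running the optimal L\'evy coupling from $(x,y)$ gives the upper bound $\mathcal{C}_c(\mu^x_t,\nu^y_t)-c(x,y)\le t\,\omega_*(x,y)$.

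For the lower bound we use the dual formulation. Take optimal dual potentials for the infinitesimal problem: quadratic functions $\varphi,\psi$ together with a constant $\kappa_*$ such that
\[
\varphi\oplus\psi\le c\ \text{on the jump part},\qquad \int\varphi\,d\nu+\int\psi\,d\tilde\nu+(\text{Gaussian dual})=\kappa_*.
\]
Then use Kantorovich duality at time $t$. Test with the functions $\tfrac12|x'|^2-\langle x',y+\cdots\rangle$-type potentials adapted to $c(x,y)$, and expand $\mu^x_t$ and $\nu^y_t$ through their generators to first order in $t$. This should give
\[
\mathcal{C}_c(\mu^x_t,\nu^y_t)-c(x,y)\ge t\,\omega_*(x,y)-o(t),
\]
locally uniformly in $(x,y)$, hence $\omega_c^-=\omega_*$.

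An alternative route to the lower bound: $\mathcal{C}_c(\mu_t,\nu_t)$ is a transport cost between infinitely divisible laws, and the optimal coupling of $\mathrm{ID}$ measures for small $t$ is asymptotically a L\'evy coupling.

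For (C1) we take $f(x)=\langle x,b-\tilde b\rangle-K$ and $g(y)=-\langle y,b-\tilde b\rangle$; the difference quotient is affine in $x-y$ up to a bounded error. (C2) then follows from Doob's $L^2$ maximal inequality, since $X_t-tb$ is a square-integrable martingale.

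\textbf{Step 3 (local optimality).} By Theorem~\ref{thm:opt-immersion}, for every immersion coupling $\mathbf{P}$ the process $S_t$ with $\omega=\omega_*$ is a $\mathbf{P}$-submartingale. By Step 1 it is a $\mathbf{P}_*$-martingale, so $\mathbf{P}_*$ is locally $c$-optimal in the sense of Definition~\ref{def:optimal-immersion}(i).

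\textbf{Step 4 (global optimality).} Because $\omega_*$ depends only on $x-y$ and is affine, we can compute expectations explicitly. Under any immersion coupling, $X_t-Y_t-t(b-\tilde b)$ is an $\boldsymbol{\mathfrak F}$-martingale, since by Proposition~\ref{prop:immersion} each component's martingales remain martingales. Hence
\[
\mathbf{E}[X_u-Y_u\mid\boldsymbol{\mathcal F}_s]=X_s-Y_s+(u-s)(b-\tilde b)
\]
is the same for every immersion coupling. Therefore $\mathbf{E}\bigl[\int_s^t\omega_*(X_u,Y_u)\,du\mid\boldsymbol{\mathcal F}_s\bigr]$ does not depend on the coupling.

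Combining this with the submartingale and martingale properties from Step 3 gives
\[
\mathbf{E}[c(X_t,Y_t)\mid\boldsymbol{\mathcal F}_s]\ge\mathbf{E}_*[c(X_t,Y_t)\mid\boldsymbol{\mathcal F}_s],
\]
where the right side is computed by the Markov property and depends only on $(X_s,Y_s)$. Conditioning on $(X_s,Y_s)$ then yields global optimality.
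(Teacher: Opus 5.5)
Your overall architecture matches the paper's: optimal L\'evy coupling plus Dynkin's formula, the identification $\omega_c^-=\theta^\Lambda$, Assumption~\ref{asp:C}, Theorem~\ref{thm:opt-immersion}, and then global optimality from the affine structure. Using the preserved martingales $X_t-tb$ and $Y_t-t\tilde b$ in Step~4 is a clean equivalent of the paper's $\tilde f\oplus\tilde g$ computation.

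The gap is the lower bound in Step~2, namely $\liminf\Theta\ge\theta^\Lambda$. This is exactly what local optimality needs. Theorem~\ref{thm:opt-immersion} only gives the submartingale with drift $\omega_c^-$, and you need $\mathcal{J}_*c\le\omega_c^-$ to transfer it to the drift that makes $S$ a $\mathbf{P}_*$-martingale.

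The mechanism you propose does not deliver this bound.
\begin{itemize}
\item Quadratic pairs touching $c$ at $(0,0)$ are, up to constants, $\varphi(z)=z^\top Az$ and $\psi(\tilde z)=\tilde z^\top B\tilde z$. For them $\mathcal{A}\varphi(0)+\mathcal{B}\psi(0)=\operatorname{tr}(A\,\mathrm{Cov}(X_1))+\operatorname{tr}(B\,\mathrm{Cov}(Y_1))$. So the best bound they can give is the Gaussian (Bures--Wasserstein) cost between the two covariance matrices, which is in general strictly below $\theta^\Lambda(0,0)$.
\item For example, take $d=1$ and compensated pure-jump processes with L\'evy measures $\delta_1+\delta_{-1}$ and $2\delta_1$. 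Both covariances equal $2$, so the quadratic bound is $0$. But $\theta^\Lambda(0,0)=1$: the optimal L\'evy coupling pairs the $+1$ jumps of $X$ with one unit of $+1$ jumps of $Y$, and sends the remaining jumps of each process to the axes at cost $\tfrac12$ each.
\item To use Kantorovich duality at time $t$ you need $\varphi\oplus\psi\le c$ globally, not only ``on the jump part''.
\item A single pair must carry both the Gaussian dual (through its Hessian at $0$) and the jump dual (through its values). This is not a simple sum when the L\'evy measures accumulate at the origin.
\item Your ``alternative route'' is circular. $\mathcal{C}_c(\mu^0_t,\nu^0_t)$ is an infimum over \emph{all} couplings, not only infinitely divisible ones, and showing it agrees with the L\'evy value to first order in $t$ is exactly the claim at stake.
\end{itemize}

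The missing ingredient is the strong duality $\theta^\Lambda=\omega'$ of Proposition~\ref{thm:strongdual}, with general $C_2^2$ pairs as in Definition~\ref{def:dual-deriv}. This is where the paper's real work lies (Section~\ref{sec:3}, via the generalized minimax theorem), and attainment of the supremum is not needed. Given $\varepsilon>0$, take $(\varphi,\psi)$ with $\varphi\oplus\psi\le c$, $\varphi(0)+\psi(0)=0$ and $\mathcal{A}\varphi(0)+\mathcal{B}\psi(0)>\theta^\Lambda(0,0)-\varepsilon$. Weak Kantorovich duality and Dynkin's formula then give $\mathcal{C}_c(\mu^0_t,\nu^0_t)\ge e^{t\mathcal{A}}\varphi(0)+e^{t\mathcal{B}}\psi(0)=t\,[\mathcal{A}\varphi(0)+\mathcal{B}\psi(0)]+o(t)$.

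The uniformity in $(x,y)$ that you assert follows from the exact identity $\Theta(t,x,y)=(x-y)^\top(b-\tilde b)+\mathcal{C}_c(\mu^0_t,\nu^0_t)/t$. To see it, expand $c(x+u,y+v)$ and note that every coupling of $\mu^0_t,\nu^0_t$ has $\int(u-v)=t(b-\tilde b)$. The same identity gives (C1) with $K=0$.

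A minor slip: running $\mathbf{P}_*$ gives the upper bound $t\,\omega_*(x,y)+t^2c(b,\tilde b)$, not $t\,\omega_*(x,y)$ (cf.\ \eqref{eq:semigroup-exp}). This is harmless in the limit.
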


\begin{remark}
	By the translation invariance of L\'evy processes and Theorem \ref{main}, for any initial distributions $\mu, \nu$ and any coupling $\gamma \in \Gamma(\mu, \nu)$, the immersion coupling of $\mathbb{P}^\mu$ and $\tilde{\mathbb{P}}^{\nu}$ given by
	\[
	\mathbf{P}_*^{\gamma} := \int_{\mathbb{R}^d \times \mathbb{R}^d} \mathbf{P}_*^{(x,y)} \, \gamma(dx, dy)
	\]
	is both locally and globally $c$-optimal.
\end{remark}

\subsection{Strategy of proofs}

The proof of Theorem~\ref{main} relies on Theorem~\ref{thm:opt-immersion} and two preliminary results established in \cite{KangLim2025}.  
The first is the \emph{existence of an optimal coupling} that achieved optimality among all \emph{L\'evy couplings}. 
At the level of generators, this asserts that for any pair of L\'evy generators $\mathcal{A}, \mathcal{B}$ with finite second moments, there exists a L\'evy coupling generator $\mathcal{J}_*$ such that
\begin{align}\label{eq:primal-prob}
	(\mathcal{J}_* c)(x,y) = \theta^\Lambda(x,y) := \inf_{\mathcal{J} \in \Gamma^\Lambda(\mathcal{A},\mathcal{B})} (\mathcal{J} c)(x,y), \qquad \text{for all } (x,y) \in \mathbb{R}^{2d},	
\end{align}
where $\Gamma^\Lambda(\mathcal{A},\mathcal{B})$ denotes the set of all L\'evy coupling generators of $(\mathcal{A},\mathcal{B})$ in the sense of \eqref{def:coupling-gen}.
The quantity $\ta^\La$ is called \emph{the L\'evy transport derivative} between two L\'evy generators $\mA,\mB$.

The second is the \emph{strong duality}, which concerns the dual formulation of this infimum problem \eqref{eq:primal-prob}. 
The dual quantity is defined as
\begin{align}\label{eq:dual-prob}
	\omega'(x,y) := \sup_{\varphi,\psi} \Big[ (\mathcal{A} \varphi)(x) + (\mathcal{B} \psi)(y) \Big] ,	
\end{align}
where the supremum is taken over all pairs $(\varphi,\psi)$ of $C^2$-functions such that $\varphi \oplus \psi \le c$ with an equality holds at the point $(x,y)$. That is, the function $\varphi \oplus \psi$ touches the quadratic cost $c$ from below at $(x,y)$. 
The quantity \eqref{eq:dual-prob} will be called \emph{the dual transport derivative}.
Together with the half-relaxed transport derivative $\om^-=\om_c^-$ from \eqref{def:half-relaxed}, the strong duality asserts that
\[
\theta^\Lambda(x,y) = \omega^-(x,y)= \omega'(x,y), \qquad \text{for all } (x,y) \in \mathbb{R}^{2d}.
\]
These two results are, as seen in \cite{KangLim2025}, essential in establishing minimality among Feller couplings and continue to play a central role in the arguments presented in the current work.

With these two results in hand, the proof of Theorem~\ref{main} proceeds as follows. 
By Theorem \ref{thm:opt-immersion}, to show local $c$-optimality it suffices to show that \eqref{def:inf-mart2} is a martingale under the Markov measure associated with the optimal L\'evy coupling. That is, 
\[
\mathbf{E}_*^{(0,0)}\big[c(X_t, Y_t) \mid \boldsymbol{\mathcal{F}}_s\big] 
= c(X_s, Y_s) + \mathbf{E}_*^{(0,0)}\left[ \int_s^t \omega^-(X_u, Y_u) \, du \;\middle|\; \boldsymbol{\mathcal{F}}_s \right], 
\qquad \text{for all } s \le t.
\]
As a consequence of strong duality, all relevant notions of transport derivatives coincide, i.e.,
\[
\omega' = \omega^- = \theta^\Lambda = \mathcal{J}_* c,
\]
where $\mathcal{J}_*$ is the L\'evy generator of the optimal L\'evy coupling. Substituting $\mathcal{J}_* c$ for $\omega_c^-$ and applying Dynkin's formula yields the martingale property. This establishes that the coupling generated by $\mathcal{J}_*$ is local optimal among all immersion couplings.
Global optimality then follows from the additive (direct sum) structure of the transport derivative; see Proposition \ref{prop:om-affine}. 
Here, Theorem \ref{thm:opt-immersion} constitutes of the main techniality.

Let us briefly comment on the proofs of the two preliminary results in \cite{KangLim2025}. 
The central idea is to use the L\'evy–Khintchine decomposition to split a pair of generators $(\mathcal{A},\mathcal{B})$ into drift, diffusion, and jump components.  
Correspondingly, the minimization and duality problems can be treated separately for each component.  
For the drift–diffusion part, the two results follow directly from the classical theory of optimal transport between Gaussian measures. 
The main challenge lies in the jump part.  
In \cite{KangLim2025}, this is addressed by formulating the \emph{L\'evy optimal transport problem}, an analogue of the classical optimal transport problem, except that one transports between L\'evy measures. 
The authors develop a full theory for this problem, establishing the existence of a minimizer and a Kantorovich-type duality, in parallel to the classical setting.  
A final technical step consists in showing the additivity of the results with respect to the L\'evy–Khintchine decomposition, thereby combining the contributions of drift, diffusion, and jump components.

In the present work, and of independent interest, we present a new, simpler approach to proving these two preliminary results.  
For the minimization problem among L\'evy couplings, rather than working at the generator level—which concerns the derivative of the law of the process at $t=0$—we consider the minimization problem for the law at $t=1$.  
This reduces the problem to a classical optimal transport problem between \emph{infinitely divisible measures}, with the additional constraint that the couplings are themselves infinitely divisible.  
In this setting, classical methods from optimal transport theory, in particular the direct method of the calculus of variations, can be applied more readily, and the existence of minimizers follows straightforwardly.  
Using basic properties of infinitely divisible measures, the minimality can then be extended from $t=1$ to hold pointwise in time, globally.  
This yields the minimality among all L\'evy couplings in a simple and transparent manner.

Regarding the strong duality, our new approach is motivated by the following \emph{heuristic} argument, which frequently appears in optimization theory.  
Suppose we are given two sets $X$ and $Y$, a function $I:X\times Y \to \mathbb{R}$, and subsets $X_0 \subset X$, $Y_0 \subset Y$ representing points where the infima/suprema are attained.  
Then, under suitable regularity conditions on $I$, one often expects that
\[
\inf_{x\in X_0} H(x) = \inf_{x\in X}\sup_{y\in Y} I(x,y) = \sup_{y\in Y}\inf_{x\in X} I(x,y) = \inf_{y\in Y_0} \tilde H(y),
\]
where $H(x):=\sup_{y\in Y} I(x,y)$ and $\tilde H(y) := \inf_{x\in X} I(x,y)$.  
Heuristically, this expresses the equality between the “primal” and “dual” optimization problems, and serves as a guiding principle for establishing strong duality in our setting.  
In fact, the minimization problem \eqref{eq:primal-prob} and the maximization problem \eqref{eq:dual-prob} can be cast in this framework.  
Hence, the validity of strong duality ultimately rests on a key step: interchanging the infimum and supremum.  
In classical convex optimization, such interchangeability is often guaranteed by a \emph{minimax principle}.  
However, to the best of our knowledge, there is no standard version of the minimax principle in the literature that applies directly to our setting.  
Consequently, we need to prove a generalized version of the minimax theorem that is tailored to our case.

\subsection{Organization}
The remainder of the paper is organized as follows.  
In Section \ref{sec:2}, we establish the existence of optimal L\'evy couplings by solving the optimal transport problem between two infinitely divisible measures. 
In Section \ref{sec:4}, we extend the optimality to the class of all immersion couplings. Specifically, the two main results — Theorems \ref{thm:opt-immersion} and \ref{main} — are proved in this section.
The strong duality used in the proof is established in Section \ref{sec:3}, while the minimax principle required for this duality is proved in the appendix.

\section{Optimal Transport Between Infinitely Divisible Measures} \label{sec:2}

In this section, we establish the existence of an optimal L\'evy coupling between two L\'evy processes with finite second moments. The construction proceeds in two steps: first, we solve the infinitely divisible optimal transport problem for the time-$1$ marginals; second, we lift the resulting optimal coupling to a full L\'evy process using the convolution semigroup property. In the subsequent section, this optimality will be extended to immersion couplings.

This section is organized as follows.
We first recall the necessary background on infinitely divisible measures, L\'evy processes, and L\'evy generators. In Section \ref{sec:2.2}, we formulate the infinitely divisible optimal transport problem and prove the existence of minimizers. The existence of optimal L\'evy couplings is then obtained as a direct consequence in Section \ref{sec:2.3}. 
Next, the optimality of these couplings at the generator level is established in Section \ref{sec:2-opt-gen}. Finally, we prove the additivity of the transport cost with respect to the L\'evy--Khintchine decomposition.

\subsection{Infinitely divisible measures, L\'evy generators and L\'evy processes}

We now briefly review the necessary background on infinitely divisible measures and L\'evy processes. For a comprehensive treatment, we refer the reader to the monographs by \cite{Bertoin1996, Sato1999}.

\subsubsection{Infinitely divisible measures and convolution semigroups.}
Let $\mcP(\mathbb{R}^d)$ denote the space of Borel probability measures on $\mathbb{R}^d$, and $\mcP_2(\mathbb{R}^d)$ its subspace of measures with finite second moments. A probability measure $\mu \in \mcP(\mathbb{R}^d)$ is \emph{infinitely divisible} (ID) if for every $n \ge 1$ there exists $\tilde\mu_n \in \mcP(\mathbb{R}^d)$ such that $\mu = \tilde\mu_n^{*n}$, where $*$ denotes convolution. We write $\ID(\mathbb{R}^d)$ for the set of all ID measures, and $\ID_2(\mathbb{R}^d) := \ID(\mathbb{R}^d) \cap \mcP_2(\mathbb{R}^d)$ denote the set of all ID measures with finite second moment. 

A \emph{convolution semigroup} is a family $\{\mu_t\}_{t\ge 0} \subset \mcP(\mathbb{R}^d)$ such that
\[
\mu_t * \mu_s = \mu_{t+s} \quad \text{for }t,s\ge 0,\qquad \mu_0 = \delta_0,
\]
with $t \mapsto \mu_t$ weakly continuous. Every convolution semigroup consists of ID measures, and conversely, every ID measure $\mu$ uniquely determines a convolution semigroup with $\mu_1 = \mu$. Note also that a convolution semigroup $\{\mu_t\}_{t\ge 0}$ has finite second moments for every $t\ge 0$ if and only if $\mu_1$ has finite second moment.

Associated with a convolution semigroup $\{\mu_t\}_{t\ge 0}$ is the \emph{L\'evy semigroup} $\{T_t\}_{t\ge 0}$ acting on bounded measurable functions $\varphi:\mbr^d\to\mbr$ by
\[
T_t \varphi(x) := (\mu_t * \varphi)(x) = \int_{\mathbb{R}^d} \varphi(x+z) \, d\mu_t(z).
\]
The family $\{T_t\}_{t\ge 0}$ is a \emph{Feller semigroup} on $C_0(\mathbb{R}^d)$. In particular, it satisfies:
\begin{itemize}
    \item $T_t : C_0(\mathbb{R}^d) \to C_0(\mathbb{R}^d)$ for all $t \ge 0$,
    \item $T_0 = \operatorname{id}$, $T_t T_s = T_{t+s}$,
    \item $\lim_{t \searrow 0} \|T_t \varphi - \varphi\|_\infty = 0$ for every $\varphi \in C_0(\mathbb{R}^d)$,
    \item $T_t \mathbf{1} = \mathbf{1}$ (conservation of mass),
    \item $T_t \varphi \ge 0$ whenever $\varphi \ge 0$ (positivity preserving).
\end{itemize}
Every L\'evy semigroup is \emph{translation invariant}, i.e., its operators commute with translation operators.  
Conversely, every translation-invariant Feller semigroup on $C_0(\mathbb{R}^d)$ is a L\'evy semigroup; that is, it arises from a convolution semigroup $\{\mu_t\}_{t\ge 0}$.

\subsubsection{L\'evy exponent, triplets, and generators}\label{sec:3.1.2}
Given an ID measure $\mu \in \ID(\mathbb{R}^d)$, its \emph{L\'evy exponent} is defined as the logarithm of the characteristic function $\hat\mu$ of $\mu$:
\begin{align}\label{eq:levy-char}
    \Psi_\mu(\xi) := \log \hat\mu(\xi), \qquad \hat\mu(\xi) = \int_{\mathbb{R}^d} e^{i x^\top \xi} \, \mu(dx).
\end{align}
By the famous \emph{L\'evy--Khintchine representation}, the L\'evy exponent of an ID measure takes the form
\begin{align}\label{eq:LK-rep}
    \Psi_\mu(\xi) = i\kappa^\top \xi - \frac12 \xi^\top \alpha \xi + \int_{\mathbb{R}^d \setminus \{0\}} \bigl( e^{i\xi^\top z} - 1 - i\xi^\top z \, b(z) \bigr) \, d\Theta(z),
\end{align}
where
\begin{itemize}
    \item $\kappa \in \mathbb{R}^d$ is the \emph{drift},
    \item $\alpha \in \mathcal{S}_{\ge 0}(\mathbb{R}^d)$ is the \emph{Gaussian covariance matrix},
    \item $\Theta$ is the \emph{L\'evy measure}, satisfying 
    \begin{align}\label{bdd:levy-meas}
        \int_{\mbr^d\setminus\{0\}}  \min(|z|^2, 1) \, d\Theta(z) < \infty,   
    \end{align}
    \item $b: \mathbb{R}^d \to [0,1]$ is a \emph{cutoff function} satisfying $b(z) \to 1$ as $|z| \to 0$ and $b(z) \to 0$ as $|z| \to \infty$ (or at least $b(z) = 0$ for large $|z|$). A standard choice is $b(z) = \mathbf{1}_{|z| \le 1}$. 
\end{itemize}
The triple $(\kappa, \alpha, \Theta)_b$ is called the \emph{L\'evy triplet with respect to the cutoff function $b$}. For a fixed cutoff function $b$, the L\'evy triplet uniquely determines the infinitely divisible measure $\mu$, and conversely, $\mu$ uniquely determines the triplet $(\kappa, \alpha, \Theta)_b$. Different choices of $b$ lead to different drifts $\kappa$ but leave $\alpha$ and $\Theta$ unchanged; the triplet $(\kappa, \alpha, \Theta)$ is therefore often understood up to the chosen truncation convention. 

In this work, we focus on ID measures $\mu \in \ID_2(\mathbb{R}^d)$ with finite second moments. For such measures, additional to \eqref{bdd:levy-meas} the L\'evy measure $\Theta$ satisfies the stronger integrability condition
\begin{align}\label{eq:second-moment}
    \int_{\mathbb{R}^d \setminus \{0\}} |z|^2 \, d\Theta(z) < \infty.    
\end{align}
In this case, the integral in \eqref{eq:LK-rep} against the L\'evy measure $\Theta$ converges absolutely without any cutoff. Thus, one may adopt the convention $b(z) \equiv 1$, leading to the \emph{untruncated} L\'evy--Khintchine representation
\begin{align}\label{eq:ULK-rep}
    \Psi_\mu(\xi) &= i\tilde\kappa^\top \xi - \frac12 \xi^\top \alpha \xi + \int_{\mathbb{R}^d \setminus \{0\}} \bigl( e^{i\xi^\top z} - 1 - i\xi^\top z \bigr) \, d\Theta(z)\\
    &=: \Psi_\nabla(\xi) + \Psi_\Delta(\xi) + \Psi_J(\xi),\nonumber
\end{align}
where $\tilde\kappa = \kappa - \int_{|z|\le 1} z \, d\Theta(z)$ is the modified drift that absorbs the truncation. 
Here $\Psi_\nabla$, $\Psi_\Delta$, $\Psi_J$ are called the \emph{drift}, \emph{diffusion}, and \emph{jump part} of the L\'evy exponent, respectively.
This representation simplifies the analysis and \emph{will be our default convention throughout the paper for all measures with finite second moments.}

Following the decomposition \eqref{eq:ULK-rep}, an ID measure $\mu \in \ID_2(\mathbb{R}^d)$ can be decomposed into a convolution of three independent ID measures:
\begin{align}\label{eq:LK-decomp}
    \mu = \mu_\nabla * \mu_\Delta * \mu_J,
\end{align}
where $\mu_\nabla$, $\mu_\Delta$, $\mu_J$ correspond to the drift, diffusion, and jump parts with L\'evy exponents $\Psi_\nabla$, $\Psi_\Delta$, $\Psi_J$, respectively. Concretely, $\mu_\nabla = \delta_{\tilde\kappa}$ (a Dirac mass at $\tilde\kappa$), $\mu_\Delta$ is the centered Gaussian measure with covariance $\alpha$, and $\mu_J$ is a centered purely jump ID measure with L\'evy measure $\Theta$. We shall call this the \emph{global L\'evy--Khintchine decomposition}.

\begin{remark}
    The global decomposition is defined with respect to the untruncated representation $b \equiv 1$, which is our default convention for measures with finite second moments. Under this convention, the decomposition is unique and the components are independent. If a different cutoff function $b$ were used, the decomposition would involve a different splitting of the drift between $\mu_\nabla$ and $\mu_J$, and the term ``global decomposition" would not apply.
\end{remark}

Related to this is the notion of \emph{L\'evy generators}. 
Given an ID measure $\mu$ that generates the convolution semigroup $\{\mu_t\}_{t\ge 0}$ and the L\'evy semigroup $\{T_t\}_{t\ge 0}$, the associated generator is the operator $\mathcal{A}: C_b^2(\mathbb{R}^d) \to C_b(\mathbb{R}^d)$ given by
\begin{align}\label{eq:generator}
    \mathcal{A} \varphi(x) := \lim_{t \searrow 0} \frac{T_t \varphi(x) - \varphi(x)}{t} = \lim_{t\searrow0} \frac{(\mu_t * \varphi)(x) - \varphi(x)}{t}.    
\end{align}
We denote by $\mathcal{G}^{\Lambda}(\mathbb{R}^d)$ the class of all \emph{L\'evy generators} arising in this way, and $\mathcal{G}_2^{\Lambda}(\mathbb{R}^d) \subset \mathcal{G}^{\Lambda}(\mathbb{R}^d)$ the subclass for which $\mu_1$ admits a finite second moment.
By the L\'evy--Khintchine representation with the triplet $(\kappa, \alpha, \Theta)_b$, the generator admits the form
\begin{align}\label{eq:LK-rep-op1}
    \mathcal{A} \varphi(x) = \kappa^\top \nabla \varphi(x) + \frac12 \operatorname{tr}\bigl(\alpha \nabla^2 \varphi(x)\bigr) + \int_{\mathbb{R}^d \setminus \{0\}} \bigl( \varphi(x+z) - \varphi(x) - z^\top \nabla \varphi(x) b(z) \bigr) \, d\Theta(z).
\end{align}

Following our convention, if $\mathcal{A} \in \mathcal{G}_2^{\Lambda}(\mathbb{R}^d)$, the corresponding ID measure $\mu$ and L\'evy measures $\Ta$ admits a finite second moment. In this case, we may take $b \equiv 1$, yielding the \emph{untruncated} representation
\begin{align}\label{eq:LK-rep-op}
    \mathcal{A} \varphi(x) &= \tilde\kappa^\top \nabla \varphi(x) + \frac12 \operatorname{tr}\bigl(\alpha \nabla^2 \varphi(x)\bigr) + \int_{\mathbb{R}^d \setminus \{0\}} \bigl( \varphi(x+z) - \varphi(x) - z^\top \nabla \varphi(x) \bigr) \, d\Theta(z)\\
    &=: \mathcal{A}_\nabla \varphi(x) + \mathcal{A}_\Delta \varphi(x) + \mathcal{A}_J \varphi(x). \nonumber
\end{align}
The operators $\mathcal{A}_\nabla$, $\mathcal{A}_\Delta$, $\mathcal{A}_J$ are the \emph{drift}, \emph{diffusion}, and \emph{jump} parts of the generator $\mathcal{A}$ under the global L\'evy Khintchine decomposition. 
As mentioned, throughout this paper, \emph{this will be the default representation for generators $\mathcal{A} \in \mathcal{G}_2^{\Lambda}(\mathbb{R}^d)$}.

Conversely, for any operator $\mathcal{A}: C_b^2(\mathbb{R}^d) \to C_b(\mathbb{R}^d)$ of the form \eqref{eq:LK-rep-op1}, there exists a L\'evy semigroup $\{T_t\}_{t\ge 0}$ (and hence an ID measure) such that \eqref{eq:generator} holds, and consequently $\mathcal{A} \in \mathcal{G}^{\Lambda}(\mathbb{R}^d)$. 
One conventionally writes $T_t = e^{t\mathcal{A}}$ to indicate this generation relation.
Likewise, if $\mathcal{A}$ takes the form \eqref{eq:LK-rep-op} with the L\'evy measure $\Theta$ satisfying the finite second moment condition \eqref{eq:second-moment},
then $\mathcal{A} \in \mathcal{G}_2^{\Lambda}(\mathbb{R}^d)$.

\subsubsection{L\'evy processes.}\label{sec:2.1.3}
Let $(\Omega, \mathcal{F}, \mathfrak{F})$ be the canonical path space on the state space $\mathbb{R}^d$. That is, $\Omega = D(\mathbb{R}_+; \mathbb{R}^d)$ is the space of cadlag functions from $\mathbb{R}_+$ to $\mathbb{R}^d$, equipped with the $\sigma$-algebra $\mathcal{F}$ generated by the coordinate projections and the natural filtration $\mathfrak{F} = \{\mathcal{F}_t\}_{t\ge 0}$ satisfying the usual conditions. The coordinate process is defined by $X_t(\omega) = \omega(t)\in\mbr^d$ for $t \ge 0$, $\omega \in \Omega$.
A \emph{L\'evy process} is a probability measure $(\mathbb{P}^0, \mathbb{E}^0)$ on $(\Omega, \mathcal{F}, \mathfrak{F})$ such that the coordinate process $\{X_t\}_{t\ge 0}$ satisfies:
\begin{itemize}
	\item $X_0 = 0$ $\mathbb{P}^0$-almost surely,
	\item \emph{Independent increments}: for any $0 \le t_1 < t_2 < \dots < t_n$, the random variables $X_{t_1}, X_{t_2} - X_{t_1}, \dots, X_{t_n} - X_{t_{n-1}}$ are independent under $\mathbb{P}^0$,
	\item \emph{Stationary increments}: for any $0 \le s \le t$, the distribution of $X_t - X_s$ depends only on $t-s$,
	\item \emph{Stochastic continuity}: for every $t \ge 0$ and $\varepsilon > 0$, $\lim_{s \to t} \mathbb{P}^0(|X_t - X_s| > \varepsilon) = 0$.
\end{itemize}
The marginal laws $\{\mu_t = \operatorname{law}(X_t)\}_{t\ge 0}$ form a convolution semigroup of measures on $\mbr^d$, and hence $\mu_1$ is infinitely divisible. Conversely, every convolution semigroup, and hence ID measure, uniquely determines the law of a L\'evy process.

A L\'evy process $\mathbb{P}^0$ induces a family of Markov measures $\{\mathbb{P}^x\}_{x\in\mathbb{R}^d}$ by spatial translation: $\mathbb{P}^x := (X_t + x)_\sharp \mathbb{P}^0$. This family defines a time-homogeneous Markov process with translation-invariant semigroup, and each $\mathbb{P}^x$ is the law of the process starting from $x$.

Throughout this work, we consider L\'evy processes with finite second moment, i.e., $\mathbb{E}^0[|X_t|^2] < \infty$ for all $t \ge 0$ (equivalently, for some $t > 0$, e.g., $t = 1$).
At the process level, the finite second moment condition yields the (untruncated) \emph{L\'evy--It\^o decomposition}, expressing $\{X_t\}_{t\ge 0}$ directly as the sum of three \emph{mutually independent} processes:
\begin{align*}
	X_t &= X_t^\nabla +X_t^\De + X_t^J \\
	&:= \tilde\kappa t + \sigma B_t + \int_0^t \int_{\mathbb{R}^d \setminus \{0\}} z \, \bigl(N(ds, dz) - ds \, d\Theta(z)\bigr).
\end{align*}
Here, $\{B_t\}_{t\ge 0}$ is a standard Brownian motion ($\sigma\sigma^\top = \alpha$), and $N$ is an independent Poisson random measure with intensity $ds \, d\Theta(z)$. These independent pathwise components correspond precisely to the drift, diffusion, and jump operators $(\mathcal{A}_\nabla, \mathcal{A}_\Delta, \mathcal{A}_J)$ of $\mathcal{A}$ in \eqref{eq:LK-rep-op}.

\subsubsection{One-to-one correspondence.}
In summary, from the classical theory of L\'evy processes, the following objects are in one-to-one correspondence:
\begin{itemize}
    \item Infinitely divisible measures $\mu \in \ID(\mathbb{R}^d)$,
    \item Convolution semigroups $\{\mu_t\}_{t\ge 0}$ with $\mu_1 = \mu$,
    \item L\'evy semigroups $\{T_t\}_{t\ge 0} = \{e^{t\mathcal{A}}\}_{t\ge 0}$,
    \item L\'evy exponents $\Psi_\mu$,
    \item L\'evy generators $\mathcal{A} \in \mathcal{G}^{\Lambda}(\mathbb{R}^d)$,
    \item L\'evy triplets $(\kappa, \alpha, \Theta)$,
    \item L\'evy processes $\{X_t\}_{t\ge 0}$ (in law).
\end{itemize}
The same one-to-one correspondence holds when the finite second moment assumption is introduced, i.e., for $\mu \in \ID_2(\mathbb{R}^d)$ and $\mathcal{A} \in \mathcal{G}_2^{\Lambda}(\mathbb{R}^d)$. 
Throughout this paper we will freely identify these objects.

\subsubsection{Quadratic moments of infinitely divisible measures}

It is a well-known fact that the \emph{quadratic moments} of a convolution semigroup of infinitely divisible measures are determined by the mean vector and covariance matrix of the measure at time $1$. In this subsection, we collect the relevant preliminaries for later use.

Let $\{\gamma_t\}_{t\ge 0}$ be a convolution semigroup on $\mathbb{R}^d$ with $\gamma_1 \in \mathcal{P}_2(\mathbb{R}^d)$. The \emph{mean vector} $m \in \mathbb{R}^d$ and \emph{covariance matrix} $\Sigma \in \mathcal{S}_{\ge 0}(\mathbb{R}^d)$ of $\gamma_1$ are defined by
\begin{align}\label{def:mean-vector}
    m := \int_{\mathbb{R}^d} x \, d\gamma_1(x), \qquad 
\Sigma := \int_{\mathbb{R}^d} (x - m)(x - m)^\top \, d\gamma_1(x).    
\end{align}
We shall use the notations $m_\mu$, $m_X$, $m_\mathcal{A}$ and $\Sigma_\mu$, $\Sigma_X$, $\Sigma_\mathcal{A}$ interchangeably to denote the mean and covariance of an ID measure, a L\'evy process, or its generator; all refer to the first two moments of the time-$1$ marginal.

For a symmetric matrix $Q \in \mathcal{S}(\mathbb{R}^d)$, we denote by $\varphi_Q(x) := x^\top Q x$ the associated quadratic form. The following lemma expresses the expectation of $\varphi_Q$ under $\gamma_t$ in terms of the mean vector $m$, covariance matrix $\Sigma$, and time $t\ge 0$.

\begin{lemma}\label{lem:quad-linear}
	Let $\{\gamma_t\}_{t\ge 0}$ be a convolution semigroup on $\mathbb{R}^d$ with $\gamma_1\in\mathcal{P}_2(\mathbb{R}^d)$. Let $m\in\mathbb{R}^d$ and $\Sigma\in\mathcal{S}_{\ge 0}(\mathbb{R}^d)$ be as in \eqref{def:mean-vector}. Let $Q\in\mathcal{S}(\mathbb{R}^d)$ be a symmetric matrix, and denote by $\varphi_Q(x)=x^\top Q x$ the associated quadratic form. Then, for all $t\ge 0$,
	\begin{align}
		\int_{\mathbb{R}^d} \varphi_Q(x) \, d\gamma_t(x) 
        &= t \, \operatorname{tr}(Q\Sigma) + t^2 \, \varphi_Q(m) 
		= t \int_{\mathbb{R}^d} \varphi_Q(x) \, d\gamma_1(x) + (t^2 - t) \varphi_Q(m). \label{eq:quad-moment-2}
	\end{align}
\end{lemma}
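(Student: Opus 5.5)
The plan is to compute the first and second moments of $\gamma_t$ directly from the convolution semigroup structure, and then use bilinearity of the quadratic form. First, the mean: let $m_t := \int x\,d\gamma_t(x)$. Since $\gamma_1\in\mathcal{P}_2$, every $\gamma_t$ has a finite second moment (as noted earlier), so $m_t$ is well defined. The semigroup property $\gamma_{t+s}=\gamma_t*\gamma_s$ means that $\gamma_{t+s}$ is the law of a sum of independent variables with laws $\gamma_t$ and $\gamma_s$. Hence $m_{t+s}=m_t+m_s$.

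Likewise, the covariance $\Sigma_t$ of $\gamma_t$ satisfies $\Sigma_{t+s}=\Sigma_t+\Sigma_s$, because covariances of independent summands add. Both $t\mapsto m_t$ and $t\mapsto \Sigma_t$ are therefore additive functions on $\mathbb{R}_+$. To conclude linearity, $m_t=tm$ and $\Sigma_t=t\Sigma$, I need some regularity. The cleanest route is via the characteristic function. Write $\hat\gamma_t=e^{t\Psi}$ with $\Psi=\Psi_{\gamma_1}$. Because the second moment is finite, $\Psi$ is $C^2$ near $0$, so I can differentiate at $\xi=0$:
\[
\int x\,d\gamma_t=-i\nabla\hat\gamma_t(0)=-it\nabla\Psi(0), \qquad \int xx^\top d\gamma_t=-\nabla^2\hat\gamma_t(0)=-t\nabla^2\Psi(0)-t^2\nabla\Psi(0)\nabla\Psi(0)^\top .
\]
Setting $t=1$ identifies $-i\nabla\Psi(0)=m$ and $-\nabla^2\Psi(0)=\Sigma$. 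This gives $\int xx^\top d\gamma_t = t\Sigma + t^2 mm^\top$.

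For the quadratic form, I take the trace against $Q$:
\[
\int\varphi_Q\,d\gamma_t=\operatorname{tr}\Bigl(Q\int xx^\top d\gamma_t\Bigr)=t\operatorname{tr}(Q\Sigma)+t^2 m^\top Qm ,
\]
which is the first equality. The second equality then follows by evaluating the first at $t=1$, namely $\int\varphi_Q\,d\gamma_1=\operatorname{tr}(Q\Sigma)+\varphi_Q(m)$, substituting for $\operatorname{tr}(Q\Sigma)$, and rearranging.

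The only delicate point is justifying differentiation of $\hat\gamma_t=e^{t\Psi}$ twice at $0$ in terms of the moments of $\gamma_t$. I would handle it by noting that finite second moment of $\gamma_1$ implies $\hat\gamma_1\in C^2$ and $\hat\gamma_1(0)=1\neq0$, so $\Psi=\log\hat\gamma_1$ is $C^2$ near $0$. Then $\hat\gamma_t=e^{t\Psi}$ is $C^2$ near $0$, and since $\gamma_t$ itself has a finite second moment, its derivatives at $0$ equal its moments. If one prefers to avoid Fourier analysis, an alternative is to argue with additivity: additivity gives $m_t=tm$ and $\Sigma_t=t\Sigma$ on rationals, and monotonicity of $t\mapsto\Sigma_t$ in the Loewner order (increments are positive semidefinite) extends this to all $t$. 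For the mean, the same extension can be obtained from continuity, using $|m_t|^2\le \operatorname{tr}\Sigma_t+|m_t|^2$ bounded on compacts.
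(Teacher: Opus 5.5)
Your proof is correct, and it handles the one step that needs an argument differently from the paper. The paper passes to the associated L\'evy process and simply asserts that stationary independent increments give $\mathbb{E}[X_t]=tm$ and $\operatorname{Cov}(X_t)=t\Sigma$. Independence and stationarity by themselves only give additivity of $t\mapsto m_t$ and $t\mapsto\Sigma_t$. The passage to linearity, which needs the continuity of the semigroup, is left implicit there.

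Your Fourier route makes linearity automatic. The identity $\hat\gamma_t=e^{t\Psi}$ already encodes continuity in $t$, and differentiating twice at $\xi=0$ gives $\int xx^\top\,d\gamma_t=t\Sigma+t^2mm^\top$ directly. From that point your trace computation against $Q$ and the rearrangement at $t=1$ are the same as the paper's. The cost is reliance on two standard facts:
\begin{itemize}
\item $\hat\gamma_t=e^{t\Psi}$ holds for convolution semigroups;
\item a finite second moment of $\gamma_t$ makes $\hat\gamma_t$ twice differentiable at $0$, with derivatives given by the moments.
\end{itemize}
You correctly note that the second fact needs finiteness of the second moment of $\gamma_t$ itself, not only of $\gamma_1$. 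The paper's version is shorter and more probabilistic but leaves the regularity step to the reader.

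Your optional non-Fourier alternative has a flaw in its last sentence. The Loewner-monotonicity argument for $\Sigma_t=t\Sigma$ is fine. The remark about the mean, however, is circular: $|m_t|^2\le\operatorname{tr}\Sigma_t+|m_t|^2$ bounds nothing. Its right-hand side is just the second moment of $\gamma_t$, and local boundedness of that is equivalent to local boundedness of $m_t$.

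To repair the alternative, first obtain $\Sigma_t=t\Sigma$. Then Chebyshev gives $\gamma_t\bigl(B_R(m_t)\bigr)\ge 1-t\operatorname{tr}\Sigma/R^2$. If $|m_{t_n}|\to\infty$ along some $t_n\to t_0$, mass would escape to infinity, contradicting tightness of the weakly convergent sequence $\gamma_{t_n}\to\gamma_{t_0}$. So $m$ is additive and locally bounded, hence linear. This affects only the aside, not your main argument.
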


\begin{proof}
    Let $\{X_t\}_{t\ge 0}$ be the L\'evy process associated with the convolution semigroup $\{\gamma_t\}_{t\ge 0}$. 
    Because $X_t$ has stationary and independent increments, its mean scales linearly with time: 
    $\mathbb{E}[X_t] = t\,\mathbb{E}[X_1] = t m$. 
    Similarly, by the independence of increments, the covariance matrix is additive, yielding 
    $\operatorname{Cov}(X_t) = t \operatorname{Cov}(X_1) = t \Sigma$.

    Evaluating the expectation of the quadratic form via the trace operator gives
    \begin{align*}
        \int_{\mathbb{R}^d} x^\top Q x \, d\gamma_t(x) 
        &= \mathbb{E}\bigl[X_t^\top Q X_t\bigr] 
        = \operatorname{tr}\bigl(Q \, \mathbb{E}[X_t X_t^\top]\bigr) \\
        &= \operatorname{tr}\bigl(Q \bigl(\operatorname{Cov}(X_t) + \mathbb{E}[X_t]\mathbb{E}[X_t]^\top\bigr)\bigr) 
        = \operatorname{tr}\bigl(Q \bigl(t\Sigma + t^2 m m^\top\bigr)\bigr).
    \end{align*}
    Applying the linearity of the trace and using $\Sigma = \int_{\mathbb{R}^d} x x^\top \, d\gamma_1(x) - m m^\top$ yields
    \begin{align*}
        \int_{\mathbb{R}^d} \varphi_Q(x) \, d\gamma_t(x) 
        &= t \biggl( \int_{\mathbb{R}^d} \varphi_Q(x) \, d\gamma_1(x) - m^\top Q m \biggr) + t^2 m^\top Q m \\
        &= t \int_{\mathbb{R}^d} \varphi_Q(x) \, d\gamma_1(x) + (t^2 - t) m^\top Q m.
    \end{align*}
    This completes the proof.
\end{proof}

\subsection{Optimal coupling problem for infinitely divisible measures}\label{sec:2.2}

We now formulate the central problem of this section, a variant of the classical optimal transport problem. In the classical setting, given two probability measures $\mu,\nu\in\mcP(\mathbb{R}^d)$, one seeks a coupling measure $\gamma\in\mcP(\mathbb{R}^{2d})$ minimizing the \emph{quadratic} transport cost:
\[
\langle \gamma, c\rangle := \int_{\mathbb{R}^{2d}} c(x,y)\,d\gamma(x,y), \qquad c(x,y):=\frac12 |x-y|^2.
\]
In our setting, we consider the same problem with $\mu,\nu$ infinitely divisible, but with the additional constraint that the coupling $\gamma$ is also infinitely divisible. We call this the \emph{infinitely divisible optimal transport problem}.

\begin{definition}[Infinitely divisible couplings, transport cost, and optimality]
	Let $\mu,\nu \in \mathrm{ID}(\mathbb{R}^d)$ and $c:\mbr^d\times\mbr^d\to\mbr_+$ be the quadratic cost $c(x,y)=\f 12 |x-y|2^2$.
	\begin{enumerate}[label=(\roman*)]
		\item An \emph{infinitely divisible (ID) coupling} of $\mu$ and $\nu$ is an ID measure $\gamma \in \mathrm{ID}(\mathbb{R}^{2d})$ such that for all measurable $E \subset \mathbb{R}^d$,
		\[
		\gamma(E \times \mathbb{R}^d) = \mu(E), \qquad 
		\gamma(\mathbb{R}^d \times E) = \nu(E).
		\]
		We denote by $\Gamma^{\mathrm{ID}}(\mu,\nu)$ the set of all ID couplings of $\mu$ and $\nu$.
		
		\item Given $\mu,\nu \in \mathrm{ID}(\mathbb{R}^d)$, the \emph{infinitely divisible quadratic transport cost} between $\mu$ and $\nu$ is defined as
		\[
		\mathcal{C}_2^{\mathrm{ID}}(\mu,\nu) 
		:= \inf_{\gamma \in \Gamma^{\mathrm{ID}}(\mu,\nu)} 
		\int_{\mathbb{R}^{2d}} c(x,y) \, d\gamma(x,y).
		\]
		
		\item An ID coupling $\gamma_* \in \Gamma^{\mathrm{ID}}(\mu,\nu)$ is called \emph{infinitely divisible optimal}, or \emph{ID-optimal}, if it attains the infimum, i.e.,
		\[
		\int_{\mathbb{R}^{2d}} c(x,y) \, d\gamma_*(x,y) = \mathcal{C}_2^{\mathrm{ID}}(\mu,\nu).
		\]
	\end{enumerate}
\end{definition}

\begin{remark}
	The set $\Gamma^{\mathrm{ID}}(\mu,\nu)$ is nonempty; for instance, the product measure $\mu\otimes\nu$ belongs to $\Gamma^{\mathrm{ID}}(\mu,\nu)$. Consequently, the cost $\mathcal{C}_2^{\mathrm{ID}}(\mu,\nu)$ is always finite.
\end{remark}

Let us begin by establishing a simple characterization of ID couplings in terms of their L\'evy exponents.
\begin{lemma}\label{lem:marg-char}
    Let $\mu,\nu \in \ID(\mathbb{R}^d)$ and $\gamma \in \ID(\mathbb{R}^{2d})$. Then $\gamma \in \Gamma^{\mathrm{ID}}(\mu,\nu)$ if and only if their L\'evy exponents satisfy for all $\xi,\eta \in \mathbb{R}^d$:
    \begin{align*}
        \Psi_\gamma(\xi,0) = \Psi_\mu(\xi), \qquad \Psi_\gamma(0,\eta) = \Psi_\nu(\eta).
    \end{align*}
\end{lemma}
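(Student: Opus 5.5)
The argument is short: marginals of a measure on $\mathbb{R}^{2d}$ are read off from its characteristic function restricted to coordinate subspaces, and a measure is determined by its characteristic function. It then remains to pass from characteristic functions to L\'evy exponents, which are their (distinguished) logarithms.

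\emph{Forward direction.} First observe that for any $\gamma\in\mcP(\mathbb{R}^{2d})$ with first marginal $\gamma^1:=\gamma(\cdot\times\mathbb{R}^d)$ we have
\[
\hat\gamma(\xi,0)=\int_{\mathbb{R}^{2d}} e^{i x^\top\xi}\,d\gamma(x,y)=\int_{\mathbb{R}^d} e^{ix^\top\xi}\,d\gamma^1(x)=\widehat{\gamma^1}(\xi),
\]
and similarly $\hat\gamma(0,\eta)=\widehat{\gamma^2}(\eta)$. If $\gamma\in\Gamma^{\ID}(\mu,\nu)$, then $\hat\gamma(\xi,0)=\hat\mu(\xi)$. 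To pass to exponents, use that for an ID measure $\hat\mu$ never vanishes, and that $\Psi_\mu$ is the unique continuous logarithm of $\hat\mu$ with $\Psi_\mu(0)=0$. Since $\xi\mapsto\Psi_\gamma(\xi,0)$ is continuous, vanishes at $0$, and exponentiates to $\hat\gamma(\xi,0)=\hat\mu(\xi)$, uniqueness of the distinguished logarithm gives $\Psi_\gamma(\xi,0)=\Psi_\mu(\xi)$. The second marginal is handled the same way.

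One could instead argue through the L\'evy--Khintchine triplet: the marginal of $\gamma$ is ID with drift, covariance and L\'evy measure given by the projections, so restricting \eqref{eq:LK-rep} for $\gamma$ to $(\xi,0)$ yields directly a L\'evy--Khintchine expression. This route requires handling the cutoff and the atom of the projected L\'evy measure at $0$, so the logarithm argument above is cleaner and is the one I would use.

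\emph{Converse.} If $\Psi_\gamma(\xi,0)=\Psi_\mu(\xi)$, then exponentiating gives $\widehat{\gamma^1}(\xi)=\hat\gamma(\xi,0)=\hat\mu(\xi)$ for all $\xi$. By uniqueness of measures with a given characteristic function, $\gamma^1=\mu$; likewise $\gamma^2=\nu$. Since $\gamma\in\ID(\mathbb{R}^{2d})$ by hypothesis, this gives $\gamma\in\Gamma^{\ID}(\mu,\nu)$.

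\emph{Main obstacle.} The only delicate point is the uniqueness of the continuous logarithm, which underlies the well-definedness of $\Psi$ in \eqref{eq:levy-char}. I will invoke the standard fact (e.g.\ \cite{Sato1999}) that ID characteristic functions have no zeros on a connected domain, so any two continuous logarithms that agree at $0$ coincide.
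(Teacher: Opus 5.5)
Your proposal is correct and follows the paper's proof. As there, you identify $\hat\gamma(\xi,0)$ and $\hat\gamma(0,\eta)$ with the characteristic functions of the marginals, pass to L\'evy exponents, and use uniqueness of characteristic functions for the converse. Your appeal to uniqueness of the distinguished (continuous, vanishing at $0$) logarithm makes precise the paper's terse ``taking logarithms'' step, which is where the non-vanishing of ID characteristic functions is actually needed.
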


\begin{proof}
    In fact, the claim holds without the ID assumption. Let $\gamma \in \mathcal{P}(\mathbb{R}^{2d})$ be any coupling of $\mu,\nu \in \mathcal{P}(\mathbb{R}^d)$. The characteristic function of the first marginal $\gamma_1 = \mu$ is
    \begin{align*}
        \hat\mu(\xi) = \int_{\mathbb{R}^d} e^{i\xi^\top x} \, d\mu(x) = \int_{\mathbb{R}^{2d}} e^{i\xi^\top x} \, d\gamma(x,y) = \hat\gamma(\xi,0).
    \end{align*}
    Similarly, $\hat\nu(\eta) = \hat\gamma(0,\eta)$. Taking logarithms gives $\Psi_\mu(\xi) = \log \hat\mu(\xi) = \log \hat\gamma(\xi,0) = \Psi_\gamma(\xi,0)$, and likewise $\Psi_\nu(\eta) = \Psi_\gamma(0,\eta)$. Conversely, if these equalities hold for the characteristic exponents, then $\hat\gamma(\xi,0) = \hat\mu(\xi)$ and $\hat\gamma(0,\eta) = \hat\nu(\eta)$, which implies that $\gamma$ has marginals $\mu$ and $\nu$. Hence $\gamma \in \Gamma(\mu,\nu)$. This completes the proof.
\end{proof}

\subsubsection{Existence of ID-optimal couplings}
The \emph{infinitely divisible optimal transport problem} therefore asks: given two ID measures $\mu,\nu\in\mcP(\mathbb{R}^d)$, does an ID-optimal coupling exist? As a first step, we provide an affirmative answer under a suitable second-moment assumption.

\begin{proposition}\label{prop:minimizer}
	Let $\mu, \nu \in \ID_2(\mathbb{R}^d)$ be two infinitely divisible measures with finite second moments. Then there exists an ID-optimal coupling $\gamma_*$ of $\mu$ and $\nu$, and moreover $\gamma_*$ itself has finite second moment.
\end{proposition}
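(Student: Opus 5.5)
We use the direct method of the calculus of variations on the set $\Gamma^{\ID}(\mu,\nu)$, equipped with the topology of weak convergence of probability measures on $\mathbb{R}^{2d}$.

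\textbf{Step 1: existence of a minimizing sequence.} The set $\Gamma^{\ID}(\mu,\nu)$ is nonempty, since it contains $\mu\otimes\nu$. Hence $\mathcal{C}_2^{\ID}(\mu,\nu)<\infty$, and we may pick a minimizing sequence $\gamma_n\in\Gamma^{\ID}(\mu,\nu)$.

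\textbf{Step 2: tightness and the limit is a coupling.} Every element of $\Gamma(\mu,\nu)$ has fixed marginals $\mu,\nu$, so the whole family is tight. By Prokhorov's theorem, a subsequence converges weakly to some $\gamma_*$. Marginal constraints are preserved under weak limits, because the projections are continuous; thus $\gamma_*\in\Gamma(\mu,\nu)$.

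Moreover, every $\gamma\in\Gamma(\mu,\nu)$ satisfies
\[
\int |(x,y)|^2\,d\gamma=\int|x|^2d\mu+\int|y|^2d\nu<\infty.
\]
This gives the finite second moment of $\gamma_*$ and also uniform integrability of $|(x,y)|^2$ along the sequence.

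\textbf{Step 3: lower semicontinuity of the cost.} Since $c\ge0$ is continuous, the map $\gamma\mapsto\langle\gamma,c\rangle$ is weakly lower semicontinuous. Hence
\[
\langle\gamma_*,c\rangle\le\liminf_n\langle\gamma_n,c\rangle=\mathcal{C}_2^{\ID}(\mu,\nu).
\]
In fact equality holds, because $c\le |x|^2+|y|^2$ is uniformly integrable along the sequence.

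\textbf{Step 4: the limit is infinitely divisible.} This is the main obstacle: we must show $\gamma_*\in\ID(\mathbb{R}^{2d})$, i.e.\ that $\ID(\mathbb{R}^{2d})$ is closed under weak convergence. This is a classical fact (e.g.\ Sato, Lemma 7.8). I would either cite it or sketch the argument.

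For the sketch, write $\gamma_n=(\gamma_n^{(k)})^{*k}$ for each fixed $k$. The $k$-th roots $\gamma_n^{(k)}$ are tight in $n$. To see this, note that $\hat\gamma_n\to\hat\gamma_*$ uniformly on compacts, and $\hat\gamma_*$ is nonvanishing near $0$. So $\widehat{\gamma_n^{(k)}}=\hat\gamma_n^{1/k}$ (the continuous branch with value $1$ at $0$) converges near $0$, which gives tightness by the standard Lévy continuity estimate.

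A subsequential limit $\rho_k$ then satisfies $\rho_k^{*k}=\gamma_*$, by continuity of convolution under weak convergence. A subtlety is that nonvanishing of $\hat\gamma_*$ everywhere is not needed for this argument, only near the origin.

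Alternatively, in our setting the second moments are uniformly bounded, since each $\gamma_n$ has the same second moment budget. This allows a more concrete approach. Compute the covariance matrices of $\gamma_n$ via their triplets: covariance equals $\alpha_n+\int zz^\top d\Theta_n$. These covariances are bounded, so the finite measures $\alpha_n\delta_0+|z|^2\Theta_n$ are bounded in total mass and tight. Extract a vaguely convergent subsequence and pass to the limit in the untruncated Lévy–Khintchine formula \eqref{eq:ULK-rep}. The integrand $(e^{i\zeta\cdot z}-1-i\zeta\cdot z)/|z|^2$ extends continuously by $-|\zeta\cdot z/|z||^2/2$ at $0$, and its bounded limit absorbs the Gaussian part.

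I would present the cited closedness argument and mention the triplet computation as a check. Once Step 4 holds, $\gamma_*\in\Gamma^{\ID}(\mu,\nu)$ attains the infimum and has finite second moment, which proves the proposition.
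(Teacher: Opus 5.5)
Your proof is correct and is essentially the paper's argument: a minimizing sequence, tightness plus Prokhorov, preservation of marginals under weak limits, closedness of $\mathrm{ID}(\mathbb{R}^{2d})$ under weak convergence (Sato, Lemma~7.8), and lower semicontinuity of $\gamma\mapsto\langle\gamma,c\rangle$. Your use of the fixed marginals for tightness, of the exact identity $\int|(x,y)|^2\,d\gamma=\int|x|^2\,d\mu+\int|y|^2\,d\nu$ for the second moment, and your root-extraction sketch of Lemma~7.8 are all sound, and if anything slightly cleaner than the paper's Portmanteau bound. The optional triplet ``check'' should be repaired or dropped: on $\mathbb{R}^{2d}$ the quotient $(e^{i\zeta\cdot z}-1-i\zeta\cdot z)/|z|^2$ tends to the direction-dependent value $-\tfrac12(\zeta\cdot z/|z|)^2$ as $z\to0$, so it has no continuous extension at the origin, and near $0$ you would have to work with the matrix-valued measures $\alpha_n\delta_0+zz^\top\Theta_n$ (compare the matrices $\alpha_n^\delta$ in Lemma~\ref{lem:Sato}) rather than a scalar mass.
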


The proof follows the same lines as in the classical optimal transport problem, where one constructs a minimizing sequence and extracts a convergent subsequence using tightness arguments. The only additional technical point is to ensure that the limiting coupling remains infinitely divisible; this follows from the fact that the class of infinitely divisible measures is closed under weak convergence.

\begin{proof}
    Set $c(x,y)=\frac12 |x-y|^2$. 
    We first note that $\mathcal{C}_2^{\mathrm{ID}}(\mu,\nu)$ is finite. 
    Indeed, since $c(x,y)\le |x|^2+|y|^2$ and $\mu,\nu$ have finite second moments,
    for any $\gamma\in\Gamma^{\mathrm{ID}}(\mu,\nu)$ we have 
    \[
     \langle \gamma, c\rangle \le \int |x|^2\,d\mu(x) + \int |y|^2\,d\nu(y) < \infty.
    \]

    Let $\{\gamma_n\}_{n\ge 1}\subset \Gamma^{\mathrm{ID}}(\mu,\nu)$ be a minimizing sequence, so that 
    $\langle \gamma_n, c\rangle \searrow \mathcal{C}_2^{\mathrm{ID}}(\mu,\nu)$. 
    The above bound yields uniform second-moment control:
    \[
    \sup_n \int \big(|x|^2 + |y|^2\big)\,d\gamma_n(x,y) < \infty.
    \]
    Hence $\{\gamma_n\}$ is tight in $\mathcal{P}(\mathbb{R}^{2d})$. By Prokhorov's theorem, there exists a subsequence (still denoted $\gamma_n$) and a probability measure $\gamma_*\in\mathcal{P}(\mathbb{R}^{2d})$ such that $\gamma_n \to \gamma_*$ weakly.

    We now show that the limit $\gamma_*$ belongs to $\Gamma^{\mathrm{ID}}(\mu,\nu)$. 
    First, $\gamma_*$ is a coupling of $\mu$ and $\nu$. Indeed, for any bounded continuous function $\varphi \in C_b(\mathbb{R}^d)$, weak convergence $\gamma_n \to \gamma_*$ gives
    \[
    \int_{\mathbb{R}^{2d}} \varphi(x)\,d\gamma_*(x,y)
    = \lim_{n\to\infty} \int_{\mathbb{R}^{2d}} \varphi(x)\,d\gamma_n(x,y)
    = \int_{\mathbb{R}^d} \varphi(x)\,d\mu(x),
    \]
    and analogously for the second marginal. Therefore $\gamma_* \in \Gamma(\mu,\nu)$.

    The infinite divisibility of $\gamma_*$ follows from the fact that the class of infinitely divisible measures is closed under weak convergence \cite[Lemma~7.8]{Sato1999}.
    Moreover, since $\{\gamma_n\}$ has uniformly bounded second moments and $c(x,y)$ is lower semicontinuous, the Portmanteau lemma implies
    \[
    \int_{\mathbb{R}^{2d}} (|x|^2+|y|^2)\,d\gamma_*(x,y)
    \le \liminf_{n\to\infty} \int_{\mathbb{R}^{2d}} (|x|^2+|y|^2)\,d\gamma_n(x,y) < \infty.
    \]
    Thus $\gamma_*$ has finite second moment, and consequently $\gamma_* \in \Gamma^{\mathrm{ID}}(\mu,\nu)$.

    Finally, applying the Portmanteau lemma again yields
    \[
    \langle \gamma_*, c \rangle \le \liminf_{n\to\infty} \langle \gamma_n, c \rangle = \mathcal{C}_2^{\mathrm{ID}}(\mu,\nu).
    \]
    Hence $\gamma_*$ attains the infimum and is therefore an ID-optimal coupling.
\end{proof}

\subsubsection{Stability of ID couplings under L\'evy flows}
The following proposition establishes that the ID-optimal coupling is stable under the action of the associated L\'evy flows. More precisely, if two convolution semigroups $\{\mu_t\}_{t\ge 0}$ and $\{\nu_t\}_{t\ge 0}$ are coupled via a convolution semigroup $\{\gamma_t\}_{t\ge 0}$ on $\mathbb{R}^{2d}$, then the optimality property at a single time $t=1$ propagates to all times $t\ge 0$.

\begin{proposition}[Stability under L\'evy flows]\label{prop:stab}
    Let $\{\mu_t\}_{t\ge 0}$ and $\{\nu_t\}_{t\ge 0}$ be convolution semigroups of probability measures on $\mathbb{R}^d$, and let $\{\gamma_t\}_{t\ge 0}$ be a convolution semigroup on $\mathbb{R}^{2d}$ such that for each $t\ge 0$ (equivalently, for some $t>0$), $\gamma_t$ is a coupling of $\mu_t$ and $\nu_t$. If $\gamma_1$ is an ID-optimal coupling of $\mu_1$ and $\nu_1$, then $\gamma_t$ is an ID-optimal coupling of $\mu_t$ and $\nu_t$ for every $t \ge 0$.
 	Moreover, the optimal cost satisfies
 	\[
 		\mathcal{C}_2^{\mathrm{ID}}(\mu_t,\nu_t) = t \, \mathcal{C}_2^{\mathrm{ID}}(\mu_1,\nu_1) + (t^2 - t) \, c(m_\mu, m_\nu),
 	\]
    where $m_\mu, m_\nu \in \mathbb{R}^d$ are the mean vectors of the convolution semigroups $\{\mu_t\}_{t\ge 0}$ and $\{\nu_t\}_{t\ge 0}$, as defined in \eqref{def:mean-vector}.
\end{proposition}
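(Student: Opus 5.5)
The plan is to reduce everything to the quadratic-moment identity of Lemma \ref{lem:quad-linear}, applied on $\mathbb{R}^{2d}$, together with the fact that an ID coupling of $\mu_t,\nu_t$ generates a convolution semigroup whose time-$s$ marginals couple $\mu_{ts},\nu_{ts}$. Write $c(x,y)=\varphi_Q(x,y)$ with $Q=\frac12\begin{pmatrix} I & -I\\ -I & I\end{pmatrix}$. Then for any convolution semigroup $\{\rho_s\}$ on $\mathbb{R}^{2d}$ with finite second moments whose time-$1$ marginals are $\mu_1,\nu_1$, the mean of $\rho_1$ is $(m_\mu,m_\nu)$, and Lemma \ref{lem:quad-linear} gives
\begin{align*}
\langle \rho_s, c\rangle = s\,\langle \rho_1,c\rangle + (s^2-s)\,c(m_\mu,m_\nu).
\end{align*}
The key point is that the correction term depends only on the marginals, not on the coupling. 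Hence, for fixed $s>0$, the map $\rho_1\mapsto\langle\rho_s,c\rangle$ is an increasing affine function of $\langle\rho_1,c\rangle$ with slope $s>0$.

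\textbf{Step 1 (marginals at all times).} I first justify the parenthetical remark. If $\gamma_t$ couples $\mu_t,\nu_t$ for one $t>0$, then by Lemma \ref{lem:marg-char} we have $t\Psi_{\gamma_1}(\xi,0)=\Psi_{\gamma_t}(\xi,0)=\Psi_{\mu_t}(\xi)=t\Psi_{\mu_1}(\xi)$, using that exponents scale linearly in time along convolution semigroups. So $\gamma_s$ couples $\mu_s,\nu_s$ for all $s$. Finite second moments are needed for Lemma \ref{lem:quad-linear}; they follow for $\gamma_1$ since its marginals have them (I implicitly assume $\mu_1,\nu_1\in\ID_2$, as needed for the cost formula to make sense).

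\textbf{Step 2 (bijection of ID couplings across times).} Fix $t>0$ and let $\gamma'\in\Gamma^{\ID}(\mu_t,\nu_t)$ be arbitrary. Let $\{\rho_s\}$ be the convolution semigroup with $\rho_t=\gamma'$, i.e.\ $\Psi_{\rho_s}=\frac{s}{t}\Psi_{\gamma'}$. By Step 1, $\rho_1\in\Gamma^{\ID}(\mu_1,\nu_1)$, and $\rho_1$ has finite second moment because its marginals do. Applying the displayed identity at times $t$ for both semigroups, we obtain
\begin{align*}
\langle\gamma',c\rangle-\langle\gamma_t,c\rangle = t\bigl(\langle\rho_1,c\rangle-\langle\gamma_1,c\rangle\bigr)\ge 0,
\end{align*}
where the inequality is the ID-optimality of $\gamma_1$. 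So $\gamma_t$ is ID-optimal for every $t>0$. The case $t=0$ is trivial, since $\Gamma^{\ID}(\delta_0,\delta_0)=\{\delta_0\}$.

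\textbf{Step 3 (cost formula).} By Step 2, $\mathcal{C}_2^{\ID}(\mu_t,\nu_t)=\langle\gamma_t,c\rangle$ and $\mathcal{C}_2^{\ID}(\mu_1,\nu_1)=\langle\gamma_1,c\rangle$. Substituting these into the displayed identity gives exactly the claimed formula.

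The only delicate step is Step 2: the identification of an arbitrary ID coupling at time $t$ with one at time $1$. This needs uniqueness of the convolution root and the linear scaling of Lévy exponents. Both are standard from the one-to-one correspondence recalled above, and I expect no real obstacle there.
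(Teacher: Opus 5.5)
Your proof is correct and follows essentially the same route as the paper: you extend an arbitrary ID coupling at time $t$ to a convolution semigroup, pass to time $1$, and apply Lemma \ref{lem:quad-linear} on $\mathbb{R}^{2d}$, whose correction term $(t^2-t)\,c(m_\mu,m_\nu)$ depends only on the marginals. The paper argues by contradiction, whereas you argue directly; you also verify explicitly, via the linear scaling of L\'evy exponents, that the time-$1$ root still couples $\mu_1,\nu_1$ and has finite second moment, and you flag the finite second-moment assumption on $\mu_1,\nu_1$ --- points the paper leaves implicit.
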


\begin{proof}
Since $\mu_0 = \nu_0 = \delta_0$ and $\gamma_0 = \delta_{(0,0)}$, the conclusion holds trivially for $t = 0$.

Consider $t > 0$. Assume, towards a contradiction, that there exists $t_0 > 0$ for which $\gamma_{t_0}$ is not optimal. Then there exists an infinitely divisible coupling
$\tilde\gamma_{t_0} \in \Gamma^{\mathrm{ID}}(\mu_{t_0}, \nu_{t_0})$
such that 
\begin{equation}\label{eq:contradiction-t0}
 \int_{\mathbb{R}^{2d}} c(x,y) \, d\tilde\gamma_{t_0}(x,y)
<
 \int_{\mathbb{R}^{2d}} c(x,y) \, d\gamma_{t_0}(x,y).
\end{equation}

By infinite divisibility, $\tilde\gamma_{t_0}$ admits a convolutional root, which extends to an element $\tilde\gamma_1 \in \Gamma^{\mathrm{ID}}(\mu_1, \nu_1)$ of a convolution semigroup at time $1$. Consequently, there exists a convolution semigroup $\{\tilde\gamma_t\}_{t\ge 0}$ on $\mathbb{R}^{2d}$ such that $\tilde\gamma_{t_0}$ is its distribution at time $t_0$ and $\tilde\gamma_1$ at time $1$.

We now apply Lemma~\ref{lem:quad-linear} on $\mathbb{R}^{2d}$ to the convolution semigroups $\{\tilde\gamma_t\}$ and $\{\gamma_t\}$ with the quadratic form $c(x,y) = \f 12 |x-y|^2$. For any $t \ge 0$, the lemma yields the linear scaling
\[
\int_{\mathbb{R}^{2d}} c(x,y) \, d\tilde\gamma_t(x,y)
= t \int_{\mathbb{R}^{2d}}c(x,y) \, d\tilde\gamma_1(x,y) + (t^2-t) c(m_{\mu},m_{\nu}),
\]
and similarly for $\gamma_t$ with $\gamma_1$ in place of $\tilde\gamma_1$. 
Since the quadratic term $(t^2-t) c(m_\mu,m_\nu)$ is identical for both $\tilde\gamma_t$ and $\gamma_t$, the strict inequality \eqref{eq:contradiction-t0} at time $t_0$ propagates to time $1$:
\[
 \int_{\mathbb{R}^{2d}} c(x,y) \, d\tilde\gamma_1(x,y)
<
 \int_{\mathbb{R}^{2d}} c(x,y) \, d\gamma_1(x,y).
\]
This contradicts the assumed optimality of $\gamma_1$ at time $1$. Hence, $\gamma_t$ must be ID-optimal for all $t \ge 0$.

Finally, we verify the asserted identity of the optimal cost. Since $\gamma_t$ is ID-optimal for $\mu_t$ and $\nu_t$ for all $t \ge 0$, applying Lemma~\ref{lem:quad-linear} with the quadratic function $c(x,y) = \frac12|x-y|^2$ yields
\begin{align*}
    \mathcal{C}_2^{\mathrm{ID}}(\mu_t, \nu_t) 
    &= \int_{\mathbb{R}^{2d}} c(x,y) \, d\gamma_t(x,y) \\
    &= t \int_{\mathbb{R}^{2d}} c(x,y) \, d\gamma_1(x,y) + (t^2 - t) \, c(m_\mu, m_\nu) \\
    &= t \, \mathcal{C}_2^{\mathrm{ID}}(\mu_1, \nu_1) + (t^2 - t) \, c(m_\mu, m_\nu).
\end{align*}
This completes the proof of Proposition \ref{prop:stab}. 
\end{proof}

\subsection{Optimal couplings of L\'evy processes among L\'evy couplings}\label{sec:2.3}

Propositions \ref{prop:minimizer} and \ref{prop:stab} imply that for any two convolution semigroups $\{\mu_t\}_{t\ge 0}$ and $\{\nu_t\}_{t\ge 0}$, there exists a coupling convolution semigroup $\{\gamma_t^*\}_{t\ge 0}$ satisfying
\[
\mathcal{C}_2^{\mathrm{ID}}(\mu_t,\nu_t) = \int_{\mathbb{R}^{2d}} c(x,y) \, d\gamma_t^*(x,y), \qquad \forall\, t \ge 0.
\]
Consequently, the optimality lifts to the process level, which we formulate next.

In what follows, we adopt the setting of stochastic bases from Section \ref{sec:2.1.3} (see also Section \ref{sec:1.1}). 
Let $(\Omega, \mathcal{F}, \mathfrak{F})$ be the canonical path space on $\mathbb{R}^d$, and let $(\mathbb{P}^0, \mathbb{E}^0)$ and $(\tilde{\mathbb{P}}^0, \tilde{\mathbb{E}}^0)$ be two measures on the path space associated with two L\'evy processes $\{X_t\}_{t\ge 0}$ and $\{Y_t\}_{t\ge 0}$ on $\mathbb{R}^d$, respectively.
A \emph{L\'evy coupling process} is a measure $(\mathbf{P}^{(0,0)}, \mathbf{E}^{(0,0)})$ on the product path space $(\boldsymbol{\Omega}, \boldsymbol{\mathcal{F}}, \boldsymbol{\mathfrak{F}})$ (see \eqref{def:prod-filtered}) such that:
\begin{itemize}
	\item The coupled process $\{(X_t, Y_t)\}_{t\ge 0}$ is a L\'evy process on $\mathbb{R}^{2d}$ under $\mathbf{P}^{(0,0)}$,
	\item The marginal laws of $\mathbf{P}^{(0,0)}$ are $\mathbb{P}^0$ and $\tilde{\mathbb{P}}^0$.
\end{itemize}
Recall also that every L\'evy process measure, e.g., $(\mathbb{P}^0, \mathbb{E}^0)$, $(\tilde{\mathbb{P}}^0, \tilde{\mathbb{E}}^0)$, or $(\mathbf{P}^{(0,0)}, \mathbf{E}^{(0,0)})$, induces a family of Markov measures via translation: for instance,
\[
\mathbb{P}^x := (X_t + x)_\sharp \mathbb{P}^0, \qquad 
\tilde{\mathbb{P}}^y := (Y_t + y)_\sharp \tilde{\mathbb{P}}^0, \qquad 
\mathbf{P}^{(x,y)} := (X_t + x, Y_t + y)_\sharp \mathbf{P}^{(0,0)}.
\]

\begin{proposition}\label{levyspecialcase}
	Let $\{X_t\}_{t\ge 0}$ and $\{Y_t\}_{t\ge 0}$ be two L\'evy processes on $\mathbb{R}^d$ with finite second moments under the probability measures $(\mathbb{P}^0, \mathbb{E}^0)$ and $(\tilde{\mathbb{P}}^0, \tilde{\mathbb{E}}^0)$ on the path space. There exists a L\'evy coupling process $(\mathbf{P}_*^{(0,0)}, \mathbf{E}_*^{(0,0)})$ of the two processes such that for every L\'evy coupling process $(\mathbf{P}^{(0,0)}, \mathbf{E}^{(0,0)})$,
	\[
	\mathbf{E}_*^{(x,y)}\!\left[c(X_t, Y_t)\right]
	\;\le\;
	\mathbf{E}^{(x,y)}\!\left[c(X_t, Y_t)\right],
	\qquad \text{for all } (t, x, y) \in [0, \infty) \times \mathbb{R}^{2d}.
	\]
\end{proposition}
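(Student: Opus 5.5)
The plan is to build $\mathbf{P}_*^{(0,0)}$ directly from the ID-optimal coupling of the time-$1$ marginals and then reduce the claim at general $(t,x,y)$ to the case $(t,0,0)$. Let $\mu_t := \operatorname{law}(X_t)$ under $\mathbb{P}^0$ and $\nu_t := \operatorname{law}(Y_t)$ under $\tilde{\mathbb{P}}^0$. These are convolution semigroups with $\mu_1,\nu_1\in\ID_2(\mathbb{R}^d)$.

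\textbf{Construction.} By Proposition~\ref{prop:minimizer} there is an ID-optimal coupling $\gamma_1^*\in\Gamma^{\mathrm{ID}}(\mu_1,\nu_1)$ with finite second moment. Being infinitely divisible on $\mathbb{R}^{2d}$, it generates a unique convolution semigroup $\{\gamma_t^*\}_{t\ge0}$, and we let $\mathbf{P}_*^{(0,0)}$ be the law on the product path space of the corresponding L\'evy process on $\mathbb{R}^{2d}$.

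\textbf{Marginals.} We check that each $\gamma_t^*$ couples $\mu_t$ and $\nu_t$. By Lemma~\ref{lem:marg-char}, $\Psi_{\gamma_1^*}(\xi,0)=\Psi_{\mu_1}(\xi)$. Since $\Psi_{\gamma_t^*}=t\Psi_{\gamma_1^*}$ and $\Psi_{\mu_t}=t\Psi_{\mu_1}$ (L\'evy exponents scale linearly in time along a convolution semigroup), we get $\Psi_{\gamma_t^*}(\xi,0)=\Psi_{\mu_t}(\xi)$, and similarly for the second marginal. Hence the one-dimensional time marginals agree. The projection of the $\mathbb{R}^{2d}$-L\'evy process onto its first $d$ coordinates is again a L\'evy process, since independence and stationarity of increments pass to projections. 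A L\'evy process is determined in law by its time-$1$ marginal, so the path-space marginals of $\mathbf{P}_*^{(0,0)}$ are exactly $\mathbb{P}^0$ and $\tilde{\mathbb{P}}^0$. Thus $\mathbf{P}_*^{(0,0)}$ is a L\'evy coupling process.

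\textbf{Optimality at $(x,y)=(0,0)$.} Let $\mathbf{P}^{(0,0)}$ be any L\'evy coupling process. Its time-$t$ law $\gamma_t$ is infinitely divisible and couples $\mu_t,\nu_t$, so $\gamma_t\in\Gamma^{\mathrm{ID}}(\mu_t,\nu_t)$. Proposition~\ref{prop:stab} gives that $\gamma_t^*$ is ID-optimal for every $t$. Therefore
\[
\mathbf{E}_*^{(0,0)}[c(X_t,Y_t)]=\langle\gamma_t^*,c\rangle\le\langle\gamma_t,c\rangle=\mathbf{E}^{(0,0)}[c(X_t,Y_t)].
\]

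\textbf{General starting point.} Under $\mathbf{P}^{(x,y)}$ the pair $(X_t,Y_t)$ has the law of $(X_t+x,Y_t+y)$ under $\mathbf{P}^{(0,0)}$, so
\[
\mathbf{E}^{(x,y)}[c(X_t,Y_t)]=\tfrac12\,\mathbf{E}^{(0,0)}\bigl[|X_t-Y_t+(x-y)|^2\bigr].
\]
Expanding the square gives three terms. The first is $\mathbf{E}^{(0,0)}[c(X_t,Y_t)]$. The cross term is $(x-y)^\top\,\mathbf{E}^{(0,0)}[X_t-Y_t]=t\,(x-y)^\top(m_\mu-m_\nu)$, which depends only on the marginals. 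The last is the constant $\tfrac12|x-y|^2$. Only the first term depends on the coupling, so the $(0,0)$ inequality transfers to every $(x,y)$.

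The only point needing care is the marginal identification at the path level. Matching one-dimensional laws is not enough by itself, and it is the L\'evy property of the projected processes that upgrades it to equality of the full path laws.
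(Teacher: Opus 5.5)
Your proposal is correct and follows the paper's proof. Like the paper, you take the ID-optimal $\gamma_1^*$ from Proposition~\ref{prop:minimizer}, lift it to its convolution semigroup, invoke Proposition~\ref{prop:stab} for optimality at every $t$, and pass to general $(x,y)$ by expanding the quadratic cost; your explicit marginal check via $\Psi_{\gamma_t^*}=t\Psi_{\gamma_1^*}$ and the path-level identification fills in what the paper only asserts. Your cross term $t\,(x-y)^\top(m_\mu-m_\nu)$ also correctly includes the factor $t$ that the paper's display omits, though this does not affect the argument since the term does not depend on the coupling.
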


\begin{remark}
	In the language of Definition \ref{def:optimal-immersion}, Proposition \ref{levyspecialcase} implies that $\mathbf{P}_*^{(x,y)}$ is dynamically $c$-optimal among all L\'evy coupling processes. This optimality will be further extended to the class of all immersion couplings in the next section.
\end{remark}

\begin{proof}
	For $t \ge 0$, let $\mu_t = \operatorname{law}(X_t)$ and $\nu_t = \operatorname{law}(Y_t)$ under $\mathbb{P}^0$ and $\tilde{\mathbb{P}}^0$, respectively. 
	Then $\{\mu_t\}_{t\ge 0}$ and $\{\nu_t\}_{t\ge 0}$ are convolution semigroups of measures with finite second moments. 
	By Propositions \ref{prop:minimizer} and \ref{prop:stab}, there exists a convolution semigroup $\{\gamma_t^*\}_{t\ge 0}$ that provides an ID-optimal coupling of $\mu_t$ and $\nu_t$ for all $t \ge 0$. 
	Let $\mathbf{P}_*^{(0,0)}$ be the L\'evy process measure on the product path space uniquely associated with $\{\gamma_t^*\}_{t\ge 0}$; under $\mathbf{P}_*^{(0,0)}$, the coupled process $\{(X_t, Y_t)\}_{t\ge 0}$ is a L\'evy coupling process of $\{X_t\}$ and $\{Y_t\}$. 
	
	Now consider any competing L\'evy coupling process $\mathbf{P}^{(0,0)}$ of $\mbp^0,\tilde\mbp^0$. 
	For $t \ge 0$, set $\gamma_t' = \operatorname{law}_{\mathbf{P}^{(0,0)}}((X_t, Y_t)) \in \mathrm{ID}_2(\mathbb{R}^{2d})$. 
	Denote by $m_X = \mathbb{E}^0[X_1]$, $m_Y = \tilde{\mathbb{E}}^0[Y_1]$ the mean vectors of the two processes. 
	For any $t \ge 0$ and $x, y \in \mathbb{R}^d$, we compute:
	\begin{align*}
		\mathbf{E}^{(x,y)}\!\left[c(X_t, Y_t)\right] 
		&= \int_{\mathbb{R}^{2d}} c(x+u, y+v) \, d\gamma_t'(u,v) \\
		&= \int_{\mathbb{R}^{2d}} c(u,v) \, d\gamma_t'(u,v) + (x-y)^\top (m_X - m_Y) + c(x,y) \\
		&\ge \int_{\mathbb{R}^{2d}} c(u,v) \, d\gamma_t^*(u,v) + (x-y)^\top (m_X - m_Y) + c(x,y) \\
		&= \mathbf{E}_*^{(x,y)}\!\left[c(X_t, Y_t)\right].
	\end{align*}
	The inequality follows from the optimality of $\gamma_t^*$, which minimizes the quadratic cost among all ID couplings of $\mu_t$ and $\nu_t$. This completes the proof.
\end{proof}

\subsection{Optimality at the generator level}\label{sec:2-opt-gen}

We now demonstrate that the corresponding results extend to the level of generators, thereby providing the analytical framework required for the developments in Sections \ref{sec:4} and \ref{sec:3}.

\begin{definition}[Coupling generators, L\'evy transport derivatives, optimality]\label{def:levy-deriv}
    Let $\mathcal{A}, \mathcal{B} \in \mathcal{G}_2^\Lambda(\mathbb{R}^d)$ be two L\'evy generators with finite second moment. 
    
    \begin{enumerate}[label=(\alph*)]
        \item A \emph{L\'evy coupling generator} is a L\'evy generator $\mathcal{J} \in \mathcal{G}_2^\Lambda(\mathbb{R}^{2d})$ such that for all test functions $\varphi, \psi \in C_b^2(\mathbb{R}^d)$,
        \[
        \mathcal{J}[\varphi \otimes 1] = (\mathcal{A}\varphi) \otimes 1 \quad \text{and} \quad \mathcal{J}[1 \otimes \psi] = 1 \otimes (\mathcal{B}\psi).
        \]
        The space of all such L\'evy coupling generators is denoted by $\Gamma^\Lambda(\mathcal{A}, \mathcal{B})$.
        
        \item The \emph{L\'evy transport derivative} between $\mathcal{A}$ and $\mathcal{B}$ with respect to the quadratic cost $c(x,y) = \frac12|x-y|^2$ is the function $\theta^\Lambda: \mathbb{R}^d \times \mathbb{R}^d \to \mathbb{R}$ defined by
        \[
        \theta^\Lambda(x,y) := \inf_{\mathcal{J} \in \Gamma^\Lambda(\mathcal{A}, \mathcal{B})} (\mathcal{J} c)(x,y).
        \]
        
        \item A L\'evy coupling generator $\mathcal{J}_* \in \Ga^\La(\mA,\mB)$ is called \emph{L\'evy $2$-optimal} if \TS{$c$ or $2$-?}
        \[
        (\mathcal{J}_* c)(x,y) = \theta^\Lambda(x,y) \qquad \text{for all } (x,y) \in \mathbb{R}^d \times \mathbb{R}^d.
        \]
    \end{enumerate}
\end{definition}

We now establish the equivalence between the optimality of L\'evy processes (in the sense of Proposition \ref{levyspecialcase}) and the optimality of their generators as in above.

\begin{proposition}\label{prop:generator_optimality}
	Let $\{X_t\}_{t\ge 0}$ and $\{Y_t\}_{t\ge 0}$ be two L\'evy processes with finite second moments, and let $\mathcal{A}, \mathcal{B} \in \mathcal{G}_2^\Lambda(\mathbb{R}^d)$ be their respective generators. Let $\mathcal{J}_* \in \mathcal{G}_2^\Lambda(\mathbb{R}^{2d})$ be the generator of an optimal L\'evy coupling process $\mathbf{P}_*^{(0,0)}$ given in Proposition \ref{levyspecialcase}. Then $\mathcal{J}_*$ is L\'evy $c$-optimal.
\end{proposition}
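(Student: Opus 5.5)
We must show that $(\mathcal{J}_* c)(x,y)\le(\mathcal{J}c)(x,y)$ for every $\mathcal{J}\in\Gamma^\Lambda(\mathcal{A},\mathcal{B})$ and every $(x,y)$. The plan is to differentiate the expected cost at $t=0$ and read off the comparison from Proposition \ref{levyspecialcase}, using Lemma \ref{lem:quad-linear} to obtain an exact formula rather than a limit.

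First, I identify L\'evy coupling generators with L\'evy coupling processes. Given $\mathcal{J}\in\Gamma^\Lambda(\mathcal{A},\mathcal{B})$, let $\{\gamma_t\}$ be its convolution semigroup on $\mathbb{R}^{2d}$ and $\mathbf{P}^{(0,0)}$ the associated L\'evy process. The marginal condition $\mathcal{J}[\varphi\otimes1]=(\mathcal{A}\varphi)\otimes1$ with $\varphi(x)=e^{i\xi^\top x}$ (real and imaginary parts lie in $C_b^2$) yields $\Psi_\gamma(\xi,0)=\Psi_\mu(\xi)$, since the symbol of a L\'evy generator is read off from its action on exponentials. The same holds in $\eta$, so Lemma \ref{lem:marg-char} shows that $\gamma_t$ couples $\mu_t,\nu_t$ for all $t$. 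Because the law of a L\'evy process is determined by its one-dimensional marginal semigroup, the marginal laws of $\mathbf{P}^{(0,0)}$ are $\mathbb{P}^0$ and $\tilde{\mathbb{P}}^0$. Hence $\mathbf{P}^{(0,0)}$ is a L\'evy coupling process, and $\mathcal{J}_*$ belongs to $\Gamma^\Lambda(\mathcal{A},\mathcal{B})$ by the converse direction.

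Second, I compute $\mathcal{J}c$. Since $c\notin C_b^2$, I interpret $(\mathcal{J}c)$ through formula \eqref{eq:LK-rep-op}, which is well defined thanks to finite second moments. I then justify
\[
(\mathcal{J}c)(x,y)=\lim_{t\searrow0}\frac{\mathbf{E}^{(x,y)}[c(X_t,Y_t)]-c(x,y)}{t}.
\]
By the computation in the proof of Proposition \ref{levyspecialcase} together with Lemma \ref{lem:quad-linear},
\[
\mathbf{E}^{(x,y)}[c(X_t,Y_t)]=c(x,y)+t\,(x-y)^\top(m_X-m_Y)+t\langle\gamma_1,c\rangle+(t^2-t)\,c(m_X,m_Y).
\]
This expression is an explicit quadratic in $t$, so its derivative at $0$ equals $(x-y)^\top(m_X-m_Y)+\langle\gamma_1,c\rangle-c(m_X,m_Y)$. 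To confirm that this coincides with the L\'evy--Khintchine expression for $\mathcal{J}c$, I use the fact that $\langle\gamma_1,c\rangle-c(m_X,m_Y)=\frac12\operatorname{tr}$ of the covariance of $X_1-Y_1$, which equals the diffusion plus jump contributions $\frac12\int|z_1-z_2|^2d\Theta_{\mathcal{J}}$. I expect this identification, namely that the generator applied to the unbounded quadratic $c$ equals the time derivative, to be the main technical point. It can be handled either by this direct moment computation or by applying Dynkin's formula to $c$ after truncating it and passing to the limit with dominated convergence using second moments.

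Third, I conclude. By Proposition \ref{levyspecialcase}, $\mathbf{E}_*^{(x,y)}[c(X_t,Y_t)]\le\mathbf{E}^{(x,y)}[c(X_t,Y_t)]$ for all $t$, and both sides equal $c(x,y)$ at $t=0$. Dividing by $t>0$ and letting $t\searrow0$ gives $(\mathcal{J}_*c)(x,y)\le(\mathcal{J}c)(x,y)$. Taking the infimum over $\mathcal{J}$ gives $\mathcal{J}_*c=\theta^\Lambda$ pointwise. Equivalently, the comparison follows from $\langle\gamma_1^*,c\rangle\le\langle\gamma_1,c\rangle$, since the mean terms are identical for every coupling.
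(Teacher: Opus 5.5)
Your proposal is correct and follows essentially the same route as the paper: realize an arbitrary $\mathcal{J}\in\Gamma^\Lambda(\mathcal{A},\mathcal{B})$ as a L\'evy coupling process, apply Proposition~\ref{levyspecialcase}, subtract $c(x,y)$, divide by $t$, and let $t\searrow 0$. The paper leaves several points implicit in its appeal to ``the definition of infinitesimal generators,'' and you supply them. You check that $\mathcal{J}_*$ itself lies in $\Gamma^\Lambda(\mathcal{A},\mathcal{B})$, and that $\mathcal{J}c$ for the unbounded cost $c$ equals the $t$-derivative of the exact quadratic-in-$t$ formula from Lemma~\ref{lem:quad-linear}. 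That formula also correctly carries the factor $t$ on $(x-y)^\top(m_X-m_Y)$.
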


\begin{proof}
	Let $\mathcal{J} \in \Gamma^\Lambda(\mathcal{A}, \mathcal{B})$ be any L\'evy coupling generator of $\mathcal{A}, \mathcal{B}$, and let $(\mathbf{P}^{(0,0)},\mathbf{E}^{(0,0)})$ be the L\'evy process generated by $\mathcal{J}$. 
	We note $\mathbf{P}^{(0,0)}$ is a L\'evy coupling process of $\{X_t\},\{Y_t\}$. 
	By Proposition \ref{levyspecialcase}, since $(\mathbf{P}_*^{(0,0)},\mathbf{E}_*^{(0,0)})$ is optimal, it holds for all $t \ge 0$, $x, y \in \mathbb{R}^d$ that
	\[
	\mathbf{E}_*^{(x,y)}[c(X_t, Y_t)] \le \mathbf{E}^{(x,y)}[c(X_t, Y_t)].
	\]
	Subtracting the initial cost $c(x,y) = \frac12|x-y|^2$ from both sides and dividing by $t > 0$, then sending $t\searrow 0$, it follows from the definition of infinitesimal generators that:
	\[
	(\mathcal{J}_* c)(x,y) \le (\mathcal{J} c)(x,y).
	\]
	Since $\mathcal{J}$ was an arbitrary element of $\Gamma^\Lambda(\mathcal{A}, \mathcal{B})$, the generator $\mathcal{J}_*$ attains the infimum, i.e., $\mathcal{J}_* c = \theta^\Lambda$. Hence $\mathcal{J}_*$ is L\'evy $c$-optimal.
\end{proof}

\begin{proposition}\label{prop:ta-affine}
	Let $\mathcal{A}, \mathcal{B} \in \mathcal{G}_2^\Lambda(\mathbb{R}^d)$. 
	Let $\{\mu_t\}_{t\ge 0}, \{\nu_t\}_{t\ge 0}$ be the associated convolution semigroups, and let $m_\mathcal{A}, m_\mathcal{B} \in \mathbb{R}^d$ be their mean vectors (see \eqref{def:mean-vector}).
	Let $\theta^\Lambda$ be the L\'evy transport derivative and $\mathcal{J}_*$ a L\'evy $2$-optimal coupling generator between $\mathcal{A}$ and $\mathcal{B}$. Then the following identities hold for all $t \ge 0$, $x, y \in \mathbb{R}^d$:
	\begin{align}
		\theta^\Lambda(x,y) &= \theta^\Lambda(0,0) + (x - y)^\top (m_\mathcal{A} - m_\mathcal{B}), \label{eq:theta-affine}\\
		\mathcal{C}_2^{\mathrm{ID}}(\mu_1, \nu_1) &= \theta^\Lambda(0,0) + c(m_\mathcal{A}, m_\mathcal{B}), \label{eq:cost-identity}\\
		e^{t\mathcal{J}_*}c(x,y) &= c(x,y) + t\,\theta^\Lambda(x,y) + t^2 c(m_\mathcal{A}, m_\mathcal{B}). \label{eq:semigroup-exp}
	\end{align}
\end{proposition}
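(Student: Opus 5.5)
The plan is to derive all three identities from the explicit quadratic-moment formula of Lemma~\ref{lem:quad-linear}, applied on $\mathbb{R}^{2d}$ to the convolution semigroup of a coupling generator, together with the optimality statements of Propositions~\ref{levyspecialcase} and~\ref{prop:generator_optimality}.

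First, fix any $\mathcal{J} \in \Gamma^\Lambda(\mathcal{A},\mathcal{B})$ with convolution semigroup $\{\gamma_t\}$ on $\mathbb{R}^{2d}$. By Lemma~\ref{lem:marg-char} the marginals of $\gamma_t$ are $\mu_t,\nu_t$, so the mean of $\gamma_1$ is $(m_\mathcal{A},m_\mathcal{B})$. Expanding $c(x+u,y+v) = c(u,v) + (x-y)^\top(u-v) + c(x,y)$ and applying Lemma~\ref{lem:quad-linear} with the quadratic form $c$ gives
\[
e^{t\mathcal{J}}c(x,y) = c(x,y) + t\,(x-y)^\top(m_\mathcal{A}-m_\mathcal{B}) + t\Bigl(\int c\,d\gamma_1 - c(m_\mathcal{A},m_\mathcal{B})\Bigr) + t^2 c(m_\mathcal{A},m_\mathcal{B}).
\]
Differentiating at $t=0$ yields $(\mathcal{J}c)(x,y) = (\mathcal{J}c)(0,0) + (x-y)^\top(m_\mathcal{A}-m_\mathcal{B})$ with $(\mathcal{J}c)(0,0) = \int c\,d\gamma_1 - c(m_\mathcal{A},m_\mathcal{B})$. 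Here $c$ is not bounded, so I must justify applying the generator to $c\notin C_b^2$. I will either use the finite second moments (Dynkin/L\'evy--Khintchine formula extended to quadratic test functions), or simply \emph{define} $\mathcal{J}c$ through the explicit formula \eqref{eq:LK-rep-op} and check that it agrees with the $t$-derivative of the display above. The latter is a routine computation with the triplet, since the drift, diffusion and jump contributions give exactly the mean and covariance terms. This justification is the main technical point.

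Taking the infimum over $\mathcal{J}$ in the affine identity, the term $(x-y)^\top(m_\mathcal{A}-m_\mathcal{B})$ does not depend on $\mathcal{J}$. This gives \eqref{eq:theta-affine} and shows that $\theta^\Lambda(0,0) = \inf_{\gamma}\int c\,d\gamma_1 - c(m_\mathcal{A},m_\mathcal{B})$, where the infimum runs over time-$1$ laws of L\'evy couplings. These laws are exactly $\Gamma^{\mathrm{ID}}(\mu_1,\nu_1)$: any ID coupling embeds in a convolution semigroup whose marginals are the semigroups of $\mu_1,\nu_1$, by uniqueness of roots via the exponents, Lemma~\ref{lem:marg-char}. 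Hence \eqref{eq:cost-identity} holds.

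Finally, for $\mathcal{J}_*$ L\'evy $2$-optimal we have $\mathcal{J}_*c = \theta^\Lambda$, so $(\mathcal{J}_*c)(0,0) = \theta^\Lambda(0,0)$. Substituting this into the semigroup expansion above gives \eqref{eq:semigroup-exp}.
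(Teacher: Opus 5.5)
Your proof is correct, but it is organized differently from the paper's, although both ultimately rest on the translation expansion of $c$ and Lemma~\ref{lem:quad-linear}.

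The paper works only with the particular generator $\mathcal{J}_*$ built in Proposition~\ref{prop:generator_optimality}:
\begin{itemize}
\item it gets \eqref{eq:theta-affine} from the translation identity $e^{t\mathcal{J}_*}c(x,y)=(e^{t\mathcal{J}_*}c)(0,0)+t(x-y)^\top(m_\mathcal{A}-m_\mathcal{B})+c(x,y)$ combined with $\mathcal{J}_*c=\theta^\Lambda$;
\item it gets \eqref{eq:cost-identity} by differentiating at $t=0$ the scaling law $\mathcal{C}_2^{\mathrm{ID}}(\mu_t,\nu_t)=t\,\mathcal{C}_2^{\mathrm{ID}}(\mu_1,\nu_1)+(t^2-t)\,c(m_\mathcal{A},m_\mathcal{B})$ of Proposition~\ref{prop:stab}. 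This applies because the semigroup of that particular $\mathcal{J}_*$ is ID-optimal at every time by construction.
\end{itemize}

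You instead derive the exact formula for $e^{t\mathcal{J}}c$ for every $\mathcal{J}\in\Gamma^\Lambda(\mathcal{A},\mathcal{B})$, take infima, and identify the time-one laws with $\Gamma^{\mathrm{ID}}(\mu_1,\nu_1)$. This buys three things:
\begin{itemize}
\item \eqref{eq:theta-affine} and \eqref{eq:cost-identity} no longer depend on the existence result of Proposition~\ref{prop:minimizer} or on Proposition~\ref{prop:stab}.
\item Your identity $(\mathcal{J}c)(0,0)=\int c\,d\gamma_1-c(m_\mathcal{A},m_\mathcal{B})$ shows that a coupling generator is L\'evy $2$-optimal if and only if its time-one law is ID-optimal. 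This is the converse of Proposition~\ref{prop:generator_optimality}.
\item Consequently \eqref{eq:semigroup-exp} is established for an arbitrary L\'evy $2$-optimal $\mathcal{J}_*$, as the statement asserts. The paper's argument is written for the constructed generator and relies on its semigroup being ID-optimal.
\end{itemize}

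Your treatment of $\mathcal{J}c$ for the unbounded cost is also more careful than the paper's, which simply reads $\mathcal{J}_*c$ as the $t$-derivative of $e^{t\mathcal{J}_*}c$. The consistency check you propose does go through. With the untruncated triplet, the drift term gives $(x-y)^\top(m_\mathcal{A}-m_\mathcal{B})$, and the diffusion and jump terms together give $\int c\,d\gamma_1-c(m_\mathcal{A},m_\mathcal{B})$. The paper's route is shorter only because it reuses machinery already in place.

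One minor citation point: Lemma~\ref{lem:marg-char} is phrased in terms of L\'evy exponents. To pass from the generator marginal condition to the marginals $\mu_t,\nu_t$, first test $\mathcal{J}$ on $e^{i\xi^\top x}\otimes 1$ (its real and imaginary parts lie in $C_b^2$) to obtain $\Psi_{\gamma_1}(\xi,0)=\Psi_{\mu_1}(\xi)$. Then use $\Psi_{\gamma_t}=t\Psi_{\gamma_1}$.
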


\begin{proof}
	Let $\mathcal{J}_*$ be the optimal coupling generator from Proposition \ref{prop:generator_optimality}. For any $(x,y) \in \mathbb{R}^{2d}$,
	\begin{align}
		(e^{t\mathcal{J}_*}c)(x,y) &= \mathbf{E}_*^{(x,y)}[c(X_t, Y_t)] \nonumber \\
		&= \mathbf{E}_*^{(0,0)}[c(X_t + x, Y_t + y)] \nonumber \\
		&= \mathbf{E}_*^{(0,0)}\bigl[ c(X_t, Y_t) + (x - y)^\top (X_t - Y_t) + c(x,y) \bigr] \nonumber \\
		&= (e^{t\mathcal{J}_*}c)(0,0) + t (x - y)^\top (m_\mathcal{A} - m_\mathcal{B}) + c(x,y). \label{eq:calc21}
	\end{align}
	By definition of $\theta^\Lambda$ and the L\'evy optimality of $\mathcal{J}_*$,
	\[
	\theta^\Lambda(x,y) = (\mathcal{J}_* c)(x,y) = \lim_{t \searrow 0} \frac{(e^{t\mathcal{J}_*}c)(x,y) - c(x,y)}{t} = \theta^\Lambda(0,0) + (x - y)^\top (m_\mathcal{A} - m_\mathcal{B}),
	\]
	which establishes \eqref{eq:theta-affine}.

	Since $\mathcal{J}_*$ is an optimal coupling generator, the associated convolution semigroup $\{\gamma_t^*\}_{t\ge 0}$ is ID-optimal for the pair $\mu_t, \nu_t$ for every $t \ge 0$. By Proposition \ref{prop:stab},
	\[
	(e^{t\mathcal{J}_*}c)(0,0) = \int_{\mathbb{R}^{2d}} c(x,y) \, d\gamma_t^*(x,y) = \mathcal{C}_2^{\mathrm{ID}}(\mu_t, \nu_t) = t \, \mathcal{C}_2^{\mathrm{ID}}(\mu_1, \nu_1) + (t^2 - t) c(m_\mathcal{A}, m_\mathcal{B}).
	\]
	Differentiating at $t = 0$ and using $c(0,0) = 0$ gives
	\(
	\theta^\Lambda(0,0) = \mathcal{C}_2^{\mathrm{ID}}(\mu_1, \nu_1) - c(m_\mathcal{A}, m_\mathcal{B}),
	\)
	which is equivalent to \eqref{eq:cost-identity}. Substituting this into \eqref{eq:calc21} yields \eqref{eq:semigroup-exp}, completing the proof.
\end{proof}

\subsection{Additivity of optimal coupling w.r.t. L\'evy--Khintchine decomposition}

We now establish an auxiliary result that is of independent interest: the infinitely divisible optimal quadratic transport cost $\mathcal{C}_2^{\mathrm{ID}}$ is additive with respect to the global L\'evy--Khintchine decomposition \eqref{eq:LK-decomp}. Although this additivity property is not directly needed for the proof of our main theorem, it sheds light on the structure of optimal ID couplings and may find applications in other contexts. The main result of this subsection is stated as follows.

\begin{theorem}\label{thm:additivity}
	Let $\mu,\nu \in \ID_2(\mathbb{R}^d)$, and let $\mu_\bullet,\nu_\bullet$ for $\bullet \in \{\nabla, \Delta, J\}$ be the drift, diffusion, and jump parts of $\mu,\nu$ under the global L\'evy--Khintchine decomposition. Then the infinitely divisible optimal quadratic transport cost satisfies
	\begin{equation}\label{eq:additivity}
		\mathcal{C}_2^{\mathrm{ID}}(\mu, \nu) = \mathcal{C}_2^{\mathrm{ID}}(\mu_{\nabla}, \nu_{\nabla}) + \mathcal{C}_2^{\mathrm{ID}}(\mu_{\Delta}, \nu_{\Delta}) + \mathcal{C}_2^{\mathrm{ID}}(\mu_{J}, \nu_{J}).
	\end{equation}
\end{theorem}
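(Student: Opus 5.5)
The plan is to prove the two inequalities in \eqref{eq:additivity} separately, working throughout with L\'evy exponents and characteristic triplets. By Lemma \ref{lem:quad-linear} applied on $\mathbb{R}^{2d}$, the cost $\langle\gamma,c\rangle$ of an ID coupling $\gamma$ depends only on its mean and covariance. Writing $\gamma$ with untruncated triplet $(\kappa_\gamma,\alpha_\gamma,\Theta_\gamma)$, we have $m_\gamma=\kappa_\gamma$ and $\Sigma_\gamma=\alpha_\gamma+\int w w^\top d\Theta_\gamma(w)$. Hence
\[
\langle \gamma, c\rangle = c(\kappa_\gamma^1,\kappa_\gamma^2) + \tfrac12\operatorname{tr}\bigl(\alpha_\gamma P\bigr) + \int_{\mathbb{R}^{2d}\setminus\{0\}} c(u,v)\,d\Theta_\gamma(u,v),\qquad P:=\begin{pmatrix} I & -I\\ -I & I\end{pmatrix}.
\]
Each of the three terms is the cost of the corresponding global L\'evy--Khintchine component $\gamma_\nabla,\gamma_\Delta,\gamma_J$, again by Lemma \ref{lem:quad-linear}, since the components are centered except for the drift part. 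So $\langle\gamma,c\rangle=\langle\gamma_\nabla,c\rangle+\langle\gamma_\Delta,c\rangle+\langle\gamma_J,c\rangle$.

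\textbf{Lower bound ($\ge$).} Take any $\gamma\in\Gamma^{\mathrm{ID}}(\mu,\nu)$ and decompose it as $\gamma=\gamma_\nabla*\gamma_\Delta*\gamma_J$. By Lemma \ref{lem:marg-char}, $\Psi_\gamma(\xi,0)=\Psi_\mu(\xi)$. Restricting the L\'evy--Khintchine formula to $(\xi,0)$ yields a L\'evy--Khintchine representation of $\mu$ with the following data:
\begin{itemize}
\item drift equal to the first block of $\kappa_\gamma$;
\item Gaussian covariance equal to the upper-left block of $\alpha_\gamma$;
\item L\'evy measure equal to the pushforward of $\Theta_\gamma$ under $(u,v)\mapsto u$, restricted to $\mathbb{R}^d\setminus\{0\}$. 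Mass of $\Theta_\gamma$ on $\{u=0\}$ contributes $e^{0}-1-0=0$, so it can be dropped.
\end{itemize}
No compensator correction is needed, because we use the untruncated convention $b\equiv1$. By uniqueness of the untruncated triplet, $\Psi_{\gamma_\bullet}(\xi,0)=\Psi_{\mu_\bullet}(\xi)$ for each $\bullet$, and similarly for $\nu$. Lemma \ref{lem:marg-char} then gives $\gamma_\bullet\in\Gamma^{\mathrm{ID}}(\mu_\bullet,\nu_\bullet)$. Therefore $\langle\gamma,c\rangle\ge\sum_\bullet\mathcal{C}_2^{\mathrm{ID}}(\mu_\bullet,\nu_\bullet)$, and taking the infimum over $\gamma$ gives the claim.

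\textbf{Upper bound ($\le$).} Take optimal $\gamma^\bullet\in\Gamma^{\mathrm{ID}}(\mu_\bullet,\nu_\bullet)$, which exist by Proposition \ref{prop:minimizer}, and set $\gamma:=\gamma^\nabla*\gamma^\Delta*\gamma^J$. This is ID. Its exponent is the sum of the three exponents, so by Lemma \ref{lem:marg-char} its marginals have exponents $\sum_\bullet\Psi_{\mu_\bullet}=\Psi_\mu$ and likewise for $\nu$. Hence $\gamma\in\Gamma^{\mathrm{ID}}(\mu,\nu)$.

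The main obstacle is the cost of $\gamma$: it is not automatically the sum of the costs of the $\gamma^\bullet$, because the cross terms between means do not vanish automatically. Since $c$ is a quadratic form and the factors are independent, the cost of the convolution equals the sum of the individual costs plus cross terms of the form $(m^1_{\bullet}-m^2_{\bullet})^\top(m^1_{\circ}-m^2_{\circ})$.

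I would handle this by showing that each $\gamma^\bullet$ can be taken with the expected mean:
\begin{itemize}
\item The marginal means are forced: $\gamma^\nabla$ has mean $(\tilde\kappa_\mu,\tilde\kappa_\nu)$, since its marginals are Dirac masses.
\item $\gamma^\Delta$ and $\gamma^J$ have centered marginals, so their means are $0$.
\end{itemize}
Then every cross term contains at least one zero mean and vanishes. The total cost is therefore $\sum_\bullet\langle\gamma^\bullet,c\rangle$, which proves $\le$. Combined with the lower bound, this gives \eqref{eq:additivity}. The same computation also justifies the additive cost formula displayed at the start, which the lower bound relies on.
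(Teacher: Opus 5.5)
Your proof is correct. Your lower bound is the same as the paper's, and your upper bound supplies a step the paper's written proof is missing.

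\textbf{Lower bound.} $\mathcal{C}_2^{\mathrm{ID}}(\mu,\nu)\ge\sum_\bullet\mathcal{C}_2^{\mathrm{ID}}(\mu_\bullet,\nu_\bullet)$ is the paper's argument. You decompose an arbitrary $\gamma\in\Gamma^{\mathrm{ID}}(\mu,\nu)$ into its global L\'evy--Khintchine components. The ``only if'' half of Lemma \ref{lem:ga-decomp} then gives $\gamma_\bullet\in\Gamma^{\mathrm{ID}}(\mu_\bullet,\nu_\bullet)$, and the quadratic cost is additive over these components. You justify the termwise matching by identifying the restricted triplet explicitly (first block of $\kappa_\gamma$, upper-left block of $\alpha_\gamma$, projected L\'evy measure) and invoking uniqueness of the untruncated triplet. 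This is cleaner than the paper's ``distinct types'' remark.

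\textbf{Upper bound.} The paper's proof never establishes $\mathcal{C}_2^{\mathrm{ID}}(\mu,\nu)\le\sum_\bullet\mathcal{C}_2^{\mathrm{ID}}(\mu_\bullet,\nu_\bullet)$. Its paragraph labelled ``$\le$'' actually yields $\ge$ after taking the infimum. Its ``reverse inequality'' paragraph decomposes an ID-optimal $\gamma^*$ and again yields only $\sum_\bullet\mathcal{C}_2^{\mathrm{ID}}(\mu_\bullet,\nu_\bullet)\le\mathcal{C}_2^{\mathrm{ID}}(\mu,\nu)$.

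Your construction is exactly the missing step:
\begin{enumerate}
\item Convolve component-wise optimal couplings $\gamma^\bullet$.
\item Use Lemma \ref{lem:marg-char} (the ``if'' direction of Lemma \ref{lem:ga-decomp}) to see that $\gamma^\nabla*\gamma^\Delta*\gamma^J\in\Gamma^{\mathrm{ID}}(\mu,\nu)$.
\item Note that the cross terms vanish, because $\gamma^\Delta$ and $\gamma^J$ have centered marginals and hence zero mean.
\end{enumerate}

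Computing the cost of the convolution through independence and means is also the efficient route. It spares you from checking that the global L\'evy--Khintchine components of $\gamma^\nabla*\gamma^\Delta*\gamma^J$ are the $\gamma^\bullet$ themselves, which applying the paper's decomposition identity directly would require. Existence of exact minimizers from Proposition \ref{prop:minimizer} is convenient but not needed; $\varepsilon$-optimal component couplings give the same inequality.
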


To prove the additivity theorem, we need the following lemma. 

\begin{lemma}\label{lem:ga-decomp}
	Let $\mu,\nu\in \ID_2(\mathbb{R}^d)$ and $\ga\in \ID_2(\mbr^{2d})$. With $\gamma = \gamma_\nabla * \gamma_\Delta * \gamma_J$, we have $\gamma \in \Gamma^{\mathrm{ID}}(\mu,\nu)$ if and only if $\gamma_{\bullet} \in \Gamma^{\mathrm{ID}}(\mu_{\bullet},\nu_{\bullet})$ for $\bullet = \nabla, \Delta, J$.
\end{lemma}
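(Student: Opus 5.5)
The plan is to work entirely at the level of L\'evy exponents and combine Lemma~\ref{lem:marg-char} with the uniqueness of the L\'evy--Khintchine triplet. Write the untruncated representation \eqref{eq:ULK-rep} on $\mathbb{R}^{2d}$ for $\gamma$ as
\[
\Psi_\gamma = \Psi_{\gamma,\nabla} + \Psi_{\gamma,\Delta} + \Psi_{\gamma,J}.
\]
Here $\gamma$ has drift $\tilde\kappa_\gamma \in \mathbb{R}^{2d}$, covariance $\alpha_\gamma \in \mathcal{S}_{\ge 0}(\mathbb{R}^{2d})$ and L\'evy measure $\Theta_\gamma$ on $\mathbb{R}^{2d}\setminus\{0\}$ with finite second moment. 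By Lemma~\ref{lem:marg-char}, $\gamma\in\Gamma^{\mathrm{ID}}(\mu,\nu)$ if and only if $\Psi_\gamma(\xi,0)=\Psi_\mu(\xi)$ and $\Psi_\gamma(0,\eta)=\Psi_\nu(\eta)$. The same criterion characterizes $\gamma_\bullet\in\Gamma^{\mathrm{ID}}(\mu_\bullet,\nu_\bullet)$ via $\Psi_{\gamma,\bullet}(\xi,0)=\Psi_{\mu,\bullet}(\xi)$, and analogously in $\eta$. So the claim reduces to showing that restricting to $\eta=0$ commutes with the decomposition.

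The key step is to compute each restricted component. For the drift part, $\Psi_{\gamma,\nabla}(\xi,0)=i(\pi_1\tilde\kappa_\gamma)^\top\xi$, where $\pi_1(x,y)=x$. For the diffusion part, $\Psi_{\gamma,\Delta}(\xi,0)=-\tfrac12\xi^\top \alpha_{11}\xi$, with $\alpha_{11}$ the upper-left block of $\alpha_\gamma$. For the jump part,
\[
\Psi_{\gamma,J}(\xi,0)=\int_{\mathbb{R}^{2d}\setminus\{0\}}\bigl(e^{i\xi^\top u}-1-i\xi^\top u\bigr)\,d\Theta_\gamma(u,v) =\int_{\mathbb{R}^{d}\setminus\{0\}}\bigl(e^{i\xi^\top u}-1-i\xi^\top u\bigr)\,d\bigl((\pi_1)_\sharp\Theta_\gamma\bigr)(u).
\]
In the last integral the points with $u=0$, $v\neq0$ contribute zero, so we may restrict $(\pi_1)_\sharp\Theta_\gamma$ to $\mathbb{R}^d\setminus\{0\}$. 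The restricted measure satisfies $\int|u|^2\,d(\pi_1)_\sharp\Theta_\gamma\le\int|(u,v)|^2\,d\Theta_\gamma<\infty$, so it is a L\'evy measure with finite second moment. Consequently, $\Psi_\gamma(\cdot,0)$ is the untruncated L\'evy--Khintchine exponent with triplet $(\pi_1\tilde\kappa_\gamma,\alpha_{11},(\pi_1)_\sharp\Theta_\gamma|_{\mathbb{R}^d\setminus\{0\}})$, and its three pieces are exactly $\Psi_{\gamma,\bullet}(\cdot,0)$.

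For the forward direction, suppose $\gamma\in\Gamma^{\mathrm{ID}}(\mu,\nu)$. Then $\Psi_\gamma(\cdot,0)=\Psi_\mu$. By uniqueness of the untruncated triplet for measures in $\ID_2(\mathbb{R}^d)$, each component matches: $\pi_1\tilde\kappa_\gamma=\tilde\kappa_\mu$, $\alpha_{11}=\alpha_\mu$, and $(\pi_1)_\sharp\Theta_\gamma|_{\mathbb{R}^d\setminus\{0\}}=\Theta_\mu$. This says precisely that $\Psi_{\gamma,\bullet}(\xi,0)=\Psi_{\mu,\bullet}(\xi)$ for each $\bullet$. The same argument in the second coordinate, with $\pi_2$ and the lower-right block $\alpha_{22}$, gives $\Psi_{\gamma,\bullet}(0,\eta)=\Psi_{\nu,\bullet}(\eta)$. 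Each $\gamma_\bullet$ is ID, being a Dirac mass, a centered Gaussian, or a centered compensated Poisson-type ID measure, and each lies in $\ID_2$. Lemma~\ref{lem:marg-char} then yields $\gamma_\bullet\in\Gamma^{\mathrm{ID}}(\mu_\bullet,\nu_\bullet)$. For the converse, sum the three marginal identities: $\Psi_\gamma(\xi,0)=\sum_\bullet\Psi_{\gamma,\bullet}(\xi,0)=\sum_\bullet\Psi_{\mu,\bullet}(\xi)=\Psi_\mu(\xi)$, and likewise for $\nu$. Lemma~\ref{lem:marg-char} then gives $\gamma\in\Gamma^{\mathrm{ID}}(\mu,\nu)$.

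The main obstacle is the careful justification of the jump-part projection. One must check that mass of $\Theta_\gamma$ on the axis $\{u=0\}$ disappears harmlessly. One must also invoke the uniqueness of the triplet in the untruncated convention; this follows from the standard truncated uniqueness, since the untruncated drift is an explicit shift of the truncated one. The drift and Gaussian cases are pure linear algebra.
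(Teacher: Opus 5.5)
Your proof is correct and follows essentially the same route as the paper. The converse is obtained by summing the component exponents and applying Lemma~\ref{lem:marg-char}, and the forward direction by uniqueness of the untruncated L\'evy--Khintchine triplet applied to $\Psi_\gamma(\cdot,0)$ and $\Psi_\gamma(0,\cdot)$. Your explicit identification of the restricted triplet fills in a step that the paper only asserts when it says the three restricted terms are ``of a distinct type'': namely, the check that $(\pi_1)_\sharp\Theta_\gamma$ restricted to $\mathbb{R}^d\setminus\{0\}$ is a L\'evy measure with finite second moment, and that mass on $\{u=0\}$ contributes nothing.
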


\begin{remark}
	We emphasize that the ``only if" part of the lemma holds only when all the L\'evy--Khintchine decompositions of $\mu$, $\nu$, and $\gamma$ are in the global form \eqref{eq:ULK-rep}.
\end{remark}

\begin{proof}
	$\leftarrow$. Assume $\gamma_\bullet \in \Gamma^{\mathrm{ID}}(\mu_\bullet,\nu_\bullet)$ for $\bullet = \nabla, \Delta, J$. Then $\gamma = \gamma_\nabla * \gamma_\Delta * \gamma_J$ has L\'evy exponent
	\[
	\Psi_\gamma(\xi,\eta) = \Psi_{\gamma_\nabla}(\xi,\eta) + \Psi_{\gamma_\Delta}(\xi,\eta) + \Psi_{\gamma_J}(\xi,\eta),
	\]
	and similarly $\Psi_\mu = \Psi_{\mu_\nabla} + \Psi_{\mu_\Delta} + \Psi_{\mu_J}$. Evaluating at $\eta = 0$ gives
	\[
	\Psi_\gamma(\xi,0) = \sum_{\bullet} \Psi_{\gamma_\bullet}(\xi,0) = \sum_{\bullet} \Psi_{\mu_\bullet}(\xi) = \Psi_\mu(\xi),
	\]
	where the middle equality uses $\gamma_\bullet \in \Gamma^{\mathrm{ID}}(\mu_\bullet,\nu_\bullet)$ and Lemma \ref{lem:marg-char}. Likewise $\Psi_\gamma(0,\eta) = \Psi_\nu(\eta)$. By Lemma \ref{lem:marg-char}, we conclude $\gamma \in \Gamma^{\mathrm{ID}}(\mu,\nu)$.
	
	$\rightarrow.$ Assume $\gamma \in \Gamma^{\mathrm{ID}}(\mu,\nu)$. Let $(\theta, \sigma, \Sigma)$ be the L\'evy triplet in the untruncated form \eqref{eq:ULK-rep} associated to $\gamma$. We then have 
	\begin{align*}
		\Psi_\gamma(\xi,\eta) &= i\theta^\top(\xi,\eta) - \frac12 (\xi,\eta)^\top \sigma (\xi,\eta) 
		+ \int_{\mathbb{R}^{2d}\setminus\{0\}} \bigl( e^{i(\xi,\eta)^\top(z,z')} - 1 - i(\xi,\eta)^\top(z,z') \bigr) \, d\Sigma(z,z') \\
		&= \Psi_{\gamma_\nabla}(\xi,\eta) + \Psi_{\gamma_\Delta}(\xi,\eta) + \Psi_{\gamma_J}(\xi,\eta),
	\end{align*}
	where $\Psi_{\gamma_\nabla}$, $\Psi_{\gamma_\Delta}$, $\Psi_{\gamma_J}$ correspond to the drift, diffusion, and jump components respectively. In particular, evaluating at $\eta=0$, the jump part satisfies
	\begin{align*}
		\Psi_{\gamma_J}(\xi,0) = \int_{\mathbb{R}^{2d}\setminus\{0\}} \bigl( e^{i\xi^\top z} - 1 - i\xi^\top z \bigr) \, d\Sigma(z,z').
	\end{align*}
	
	By Lemma \ref{lem:marg-char}, $\Psi_\gamma(\xi,0) = \Psi_\mu(\xi)$. Using the additivity of the L\'evy exponent,
	\[
	\Psi_{\gamma_\nabla}(\xi,0) + \Psi_{\gamma_\Delta}(\xi,0) + \Psi_{\gamma_J}(\xi,0) = \Psi_{\mu_\nabla}(\xi) + \Psi_{\mu_\Delta}(\xi) + \Psi_{\mu_J}(\xi).
	\]
	The right-hand side is the unique decomposition of L\'evy exponent into drift, diffusion and jump part in the global form. Since each term on the left-hand side is of a distinct type — $\Psi_{\gamma_\nabla}(\xi,0)$ is purely imaginary and linear in $\xi$, $\Psi_{\gamma_\Delta}(\xi,0)$ is real and quadratic, and $\Psi_{\gamma_J}(\xi,0)$, as shown above, is given by the L\'evy integral — the uniqueness of the L\'evy--Khintchine decomposition under the global form forces termwise equality: $\Psi_{\ga_\bullet}(\xi,0)=\Psi_{\mu_\bullet}$ for $\bullet\in\{\nabla,\De,J\}$. 
	The same argument with $\xi = 0$ yields $\Psi_{\gamma_\bullet}(0,\eta) = \Psi_{\nu_\bullet}(\eta)$ for $\bullet = \nabla, \Delta, J$. By Lemma \ref{lem:marg-char}, this implies $\gamma_\bullet \in \Gamma^{\mathrm{ID}}(\mu_\bullet,\nu_\bullet)$ for each $\bullet$.
\end{proof}

We now return to the proof of Theorem \ref{thm:additivity}.

\begin{proof}[Proof of Theorem \ref{thm:additivity}]
	The proof makes use of the following fact: if $\gamma \in \ID_2(\mathbb{R}^{2d})$ admits the L\'evy--Khintchine decomposition $\gamma = \gamma_{\nabla} * \gamma_{\Delta} * \gamma_{J}$, and $Q: \mathbb{R}^{2d} \to \mathbb{R}$ is a quadratic form (e.g., $Q(x,y) = c(x,y) = \frac12 |x-y|^2$), then
	\[
	\langle \gamma, Q \rangle = \langle \gamma_{\nabla}, Q \rangle + \langle \gamma_{\Delta}, Q \rangle + \langle \gamma_{J}, Q \rangle.
	\]
	This identity follows directly from the fact that the L\'evy process associated with $\gamma$ is the independent sum of its drift, diffusion, and jump components, and both the diffusion and jump parts are centered.
	
	To prove the inequality $\le$ in \eqref{eq:additivity}, pick any $\gamma \in \Gamma^{\mathrm{ID}}(\mu,\nu)$. By Lemma \ref{lem:ga-decomp}, $\gamma_{\bullet} \in \Gamma^{\mathrm{ID}}(\mu_{\bullet},\nu_{\bullet})$ for each $\bullet \in \{\nabla, \Delta, J\}$. Using the identity above,
	\[
	\langle \gamma, c \rangle = \langle \gamma_{\nabla}, c \rangle + \langle \gamma_{\Delta}, c \rangle + \langle \gamma_{J}, c \rangle
	\ge \mathcal{C}_2^{\mathrm{ID}}(\mu_{\nabla}, \nu_{\nabla}) + \mathcal{C}_2^{\mathrm{ID}}(\mu_{\Delta}, \nu_{\Delta}) + \mathcal{C}_2^{\mathrm{ID}}(\mu_{J}, \nu_{J}).
	\]
	Taking the infimum over all $\gamma \in \Gamma^{\mathrm{ID}}(\mu,\nu)$ yields the desired inequality.
	
	For the reverse inequality, let $\gamma^* \in \Gamma^{\mathrm{ID}}(\mu,\nu)$ be an ID-optimal coupling. Applying the decomposition identity again,
	\[
	\langle \gamma^*_{\nabla}, c \rangle + \langle \gamma^*_{\Delta}, c \rangle + \langle \gamma^*_{J}, c \rangle = \langle \gamma^*, c \rangle = \mathcal{C}_2^{\mathrm{ID}}(\mu,\nu).
	\]
	By Lemma \ref{lem:ga-decomp}, $\gamma^*_{\bullet} \in \Gamma^{\mathrm{ID}}(\mu_{\bullet},\nu_{\bullet})$ for each $\bullet \in \{\nabla, \Delta, J\}$. Hence,
	\(
	\mathcal{C}_2^{\mathrm{ID}}(\mu_{\bullet}, \nu_{\bullet}) \le \langle \gamma^*_{\bullet}, c \rangle.
	\)
	Summing over $\bullet$ gives
	\[
	\mathcal{C}_2^{\mathrm{ID}}(\mu_{\nabla}, \nu_{\nabla}) + \mathcal{C}_2^{\mathrm{ID}}(\mu_{\Delta}, \nu_{\Delta}) + \mathcal{C}_2^{\mathrm{ID}}(\mu_{J}, \nu_{J}) \le \mathcal{C}_2^{\mathrm{ID}}(\mu,\nu).
	\]
	Combining the two inequalities establishes the equality \eqref{eq:additivity}.
\end{proof}
\section{Optimal Immersion Couplings of L\'evy Processes}\label{sec:4}

In this section we prove the two main results of this work, Theorems \ref{thm:opt-immersion} and \ref{main}. Throughout this section, we assume the setting of theorems. 
Specifically, let $(\Omega, \mathcal{F}, \mathfrak{F})$ and $(\tilde\Omega, \tilde{\mathcal{F}}, \tilde{\mathfrak{F}})$ be two filtered measurable spaces, on which the $\Pi$-valued process $\{X_t\}_{t\ge 0}$ and $\tilde \Pi$-valued process $\{Y_t\}_{t\ge 0}$, together with the two families of Markov measures $\{(\mathbb{P}^x, \mathbb{E}^x)\}_{x\in\Pi}$ and $\{(\tilde{\mathbb{P}}^y, \tilde{\mathbb{E}}^y)\}_{y\in\tilde\Pi}$ are defined. Fix two initial distributions $\mu \in \mathcal{P}(\Pi)$ and $\nu \in \mathcal{P}(\tilde\Pi)$ and consider an immersion coupling of the Markov measures $\mathbb{P}^\mu$ and $\tilde{\mathbb{P}}^{\nu}$ given by
\(
(\boldsymbol{\Omega}, \boldsymbol{\mathcal{F}}, \boldsymbol{\mathfrak{F}}, \mathbf{P}, \mathbf{E}),
\)
where $(\boldsymbol{\Omega}, \boldsymbol{\mathcal{F}}, \boldsymbol{\mathfrak{F}}=\cb{\bs{\mcl{F}}_t}_{t\ge 0})$ is the product filtered measurable space. 

Recall the laws of the two Markov processes:
\[
\mu_t^x := \operatorname{law}^x(X_t) = (X_t)_{\sharp}(\mathbb{P}^x)\in\mcp(\Pi) , \qquad 
\nu_t^y := \operatorname{law}^y(Y_t) = (Y_t)_\sharp (\tilde{\mathbb{P}}^y)\in\mcl{P}(\tilde\Pi).
\]
Introduce the semigroup operators $\{T_t\}_{t\ge 0}$ and $\{\tilde T_t\}_{t\ge 0}$ associated with the two Markov processes; that is, for any bounded continuous functions $\varphi \in C_b(\Pi)$ and $\psi \in C_b(\tilde\Pi)$,
\begin{align}\label{def:semigroup}
	T_t \varphi(x) := \mathbb{E}^x[\varphi(X_t)] = \int_{\Pi} \varphi \, d\mu_t^x, \qquad 
	\tilde T_t \tilde\varphi(y) := \tilde{\mathbb{E}}^y[\psi(Y_t)] = \int_{\tilde\Pi} \psi \, d\nu_t^y.	
\end{align}

\subsection{Proof of Theorem \ref{thm:opt-immersion}}
Let us begin with proving Theorem \ref{thm:opt-immersion}. As in the setting of the theorem, let $c: \Pi \times \tilde\Pi \to \mathbb{R}_+$ be a lower semicontinuous cost, and $\mcc_c:\mcl{P}(\Pi)\times\mcp(\tilde\Pi)\to [0,\infty]$ be the associated $c$-optimal transport cost. Denote $\Xi:\mbr_+\times \Pi\times \tilde \Pi\to[0,\infty]$ the $c$-transport cost between the laws of the two Markov processes:
\begin{align}\label{def:Xi}
	\Xi(t, x, y) := \mathcal{C}_c(\mu_t^x, \nu_t^y).
\end{align}
By \emph{Kantorovich duality} \cite{MR2459454} and the notion of semigroups from \eqref{def:semigroup}, it holds
\begin{align}\label{eq:Kantorovich}
	\Xi(t, x, y) = \sup_{(\varphi, \psi)} \left\{ \int_{\Pi} \varphi \, d\mu_t^x + \int_{\tilde\Pi} \psi \, d\nu_t^y \right\} = \sup_{(\varphi, \psi)} \sqb{T_t\vphi(x)+\tilde T_t\psi(y)},	
\end{align}
where the supremum is taken over all such pairs of bounded continuous functions $(\varphi, \psi) \in C_b(\Pi) \times C_b(\tilde\Pi)$ with $\varphi \oplus \psi \le c$. We note $\Xi(0,x,y)=c(x,y)$. Subsequently, define the finite quotient: for $t>0,(x,y)\in\Pi\times\tilde\Pi$:
\begin{align}\label{def:Ta}
	\Ta(t,x,y):= \frac{\Xi(t,x,y)-c(x,y)}{t}
\end{align}

We begin by proving the following lemma.

\begin{lemma}\label{lem:Xi-lem}
	Assume the setting of Theorem \ref{thm:opt-immersion}, and let $(\mathbf{P}, \mathbf{E})$ be an immersion coupling of the two Markov measures $\mathbb{P}^\mu$ and $\tilde{\mathbb{P}}^{\nu}$. 
	Then for any $t, h \ge 0$,
	\[
	\mathbf{E}\bigl[ c(X_{t+h}, Y_{t+h}) \mid \boldsymbol{\mathcal{F}}_t \bigr] \ge \Xi(h, X_t, Y_t)\qquad\mathbf{P\text{-}}a.s.
	\]
\end{lemma}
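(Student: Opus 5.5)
The idea is to use the Kantorovich dual representation \eqref{eq:Kantorovich} together with the immersion property in Definition \ref{def:immersion} and the Markov property. Fix $t,h\ge 0$ and an admissible pair $(\varphi,\psi)\in C_b(\Pi)\times C_b(\tilde\Pi)$ with $\varphi\oplus\psi\le c$.

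First, since $c\ge \varphi\oplus\psi$ pointwise and conditional expectation is monotone, we have $\mathbf{P}$-a.s.
\[
\mathbf{E}\bigl[c(X_{t+h},Y_{t+h})\mid\boldsymbol{\mathcal F}_t\bigr]\ge \mathbf{E}\bigl[\varphi(X_{t+h})\otimes 1\mid\boldsymbol{\mathcal F}_t\bigr]+\mathbf{E}\bigl[1\otimes\psi(Y_{t+h})\mid\boldsymbol{\mathcal F}_t\bigr].
\]
Here $c\ge 0$, so the left side is well defined, possibly infinite. Both terms on the right are bounded, hence in $L^1$.

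Next, I apply \eqref{eq:marg-condexp} with $M=\varphi(X_{t+h})$, which is $\mathcal F_\infty$-measurable and bounded. This gives $\mathbf{E}[\varphi(X_{t+h})\otimes1\mid\boldsymbol{\mathcal F}_t]=\mathbb{E}^\mu[\varphi(X_{t+h})\mid\mathcal F_t]\otimes1$. By the Markov property of $\mathbb{P}^\mu$, obtained by integrating the Markov property of $\mathbb{P}^x$ against $\mu$ with $f=\varphi(X_h)$ and $f\circ\Theta_t=\varphi(X_{t+h})$, this equals $T_h\varphi(X_t)$ a.s. The second term similarly equals $\tilde T_h\psi(Y_t)$. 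Note that immersion only gives equalities $\mathbb P^\mu$-a.s. on the marginal, and these transfer to $\mathbf P$-a.s. statements because the marginal of $\mathbf P$ is $\mathbb P^\mu$. Combining, we get $\mathbf P$-a.s.
\[
\mathbf{E}\bigl[c(X_{t+h},Y_{t+h})\mid\boldsymbol{\mathcal F}_t\bigr]\ge T_h\varphi(X_t)+\tilde T_h\psi(Y_t).
\]

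The main obstacle is taking the supremum over $(\varphi,\psi)$. The family of admissible pairs is uncountable, and each inequality holds only off a null set that depends on the pair. I would handle this by selecting a countable family $\{(\varphi_n,\psi_n)\}$ whose supremum still recovers $\Xi(h,x,y)$ for every $(x,y)$. One route is to note that Kantorovich duality for lsc $c\ge0$ on Polish spaces can be taken over pairs with $\varphi\oplus\psi\le c$ drawn from a countable family. For instance, write $c$ as an increasing limit of bounded Lipschitz functions $c_k$, which are $c_k$-dualizable with Lipschitz potentials, and use separability of bounded Lipschitz functions on compacts.

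If that is too delicate, an alternative avoids countability altogether. Note that $(x,y)\mapsto T_h\varphi(x)+\tilde T_h\psi(y)$ is measurable for each pair. It then suffices to show that the conditional expectation dominates the essential supremum of this family, and that this essential supremum equals $\Xi(h,X_t,Y_t)$ a.s. The inequality follows from the previous step applied to a countable subfamily attaining the essential supremum. The identification with the pointwise supremum is exactly the place where a countable dense family is needed. Either way, once the supremum over a countable sufficient family is taken on a single full-measure set, \eqref{eq:Kantorovich} yields $\mathbf{E}[c(X_{t+h},Y_{t+h})\mid\boldsymbol{\mathcal F}_t]\ge\Xi(h,X_t,Y_t)$ $\mathbf{P}$-a.s.
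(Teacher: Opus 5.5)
Your proposal is correct and follows the same route as the paper: monotonicity of conditional expectation, then the immersion identity \eqref{eq:marg-condexp} with the Markov property to obtain $T_h\varphi(X_t)+\tilde T_h\psi(Y_t)$, then Kantorovich duality \eqref{eq:Kantorovich}. The paper takes the supremum over all admissible pairs directly and does not address the pair-dependent null sets, so your countable reduction is a genuine refinement; a clean way to justify it is that $\mathcal{P}(\Pi)\times\mathcal{P}(\tilde\Pi)$ is Polish, hence hereditarily Lindel\"of, so for each rational $r$ the open set $\{\mathcal{C}_c>r\}$ is covered by countably many of the open sets $\{\int\varphi\,d\mu'+\int\psi\,d\nu'>r\}$, and this also yields measurability of $\Xi(h,\cdot,\cdot)$.
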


\begin{proof}
	Fix a pair of test functions $\varphi \in C_b(\Pi)$ and $\psi \in C_b(\tilde\Pi)$ such that $\varphi \oplus \psi \le c$. By the monotonicity of conditional expectation, for any $t, h \ge 0$,
	\[
	\mathbf{E}\bigl[ c(X_{t+h}, Y_{t+h}) \mid \boldsymbol{\mathcal{F}}_t \bigr] \ge \mathbf{E}\bigl[ \varphi(X_{t+h}) + \psi(Y_{t+h}) \mid \boldsymbol{\mathcal{F}}_t \bigr] \quad \mathbf{P}\text{-a.s.}
	\]
	By Proposition \ref{prop:immersion}, the Markov property, and the definition of the semigroup \eqref{def:semigroup}, we have
	\begin{align*}
		\mathbf{E}\bigl[ \varphi(X_{t+h}) \mid \boldsymbol{\mathcal{F}}_t \bigr]
		&= \mathbf{E}\bigl[ \varphi(X_{t+h}) \otimes 1 \mid \boldsymbol{\mathcal{F}}_t \bigr]\\
		&= \mathbb{E}^\mu\bigl[ \varphi(X_{t+h}) \mid \mathcal{F}_t \bigr] \otimes 1
		= \mathbb{E}^{X_t}\bigl[ \varphi(X_h) \bigr] \otimes 1
		= (T_h \varphi)(X_t).
	\end{align*}
	Similarly,
	\(
	\mathbf{E}\bigl[ \psi(Y_{t+h}) \mid \boldsymbol{\mathcal{F}}_t \bigr] = (\tilde T_h \psi)(Y_t).
	\)
	
	Inserting these identities into the inequality above yields
	\[
	\mathbf{E}\bigl[ c(X_{t+h}, Y_{t+h}) \mid \boldsymbol{\mathcal{F}}_t \bigr] \ge (T_h \varphi)(X_t) + (\tilde T_h \psi)(Y_t).
	\]
	This holds for every pair $(\varphi, \psi)$ with $\varphi \oplus \psi \le c$. Taking the supremum over all such pairs and applying the Kantorovich duality \eqref{eq:Kantorovich} gives the desired bound.
\end{proof}

We may now prove Theorem \ref{thm:opt-immersion}.

\begin{proof}[Proof of Theorem \ref{thm:opt-immersion}]
	Consider the $\mathbb{R}$-valued process $\{S_t\}_{t\ge 0}$ defined on $(\boldsymbol{\Omega}, \boldsymbol{\mathcal{F}}, \boldsymbol{\mathfrak{F}})$ given as in the theorem.
	To establish that $\{S_t\}_{t\ge 0}$ is a $(\boldsymbol{\mathfrak{F}}, \mathbf{P})$-submartingale, we need to show for any fixed $0 \le s \le t < \infty$ that
	\begin{align}\label{eq:goal4.1}
		\mathbf{E}[S_t - S_s \mid \boldsymbol{\mathcal{F}}_s] = \mathbf{E}\left[ c(X_t, Y_t) - c(X_s, Y_s) - \int_s^t \omega_c^-(X_u, Y_u) \, du \;\middle|\; \boldsymbol{\mathcal{F}}_s \right] \ge 0.
	\end{align}
	
	Fix $n \ge 1$ and let $h_n := \frac{t-s}{n}$. Define $\tau_k = \tau_k^n := s + k h_n$ for $k = 0, \dots, n$, so that $\tau_0 = s$ and $\tau_n = t$. Writing the increment as a telescoping sum and using the tower property of conditional expectations, we obtain
	\begin{align}\label{eq:4.1}
		\mathbf{E}[c(X_t, Y_t) - c(X_s, Y_s) \mid \boldsymbol{\mathcal{F}}_s] = \mathbf{E}\bigl[ \Psi_n \mid \boldsymbol{\mathcal{F}}_s \bigr],
	\end{align}
	where
	\[
	\Psi_n := \sum_{k=0}^{n-1} \mathbf{E}\left[ c(X_{\tau_{k+1}}, Y_{\tau_{k+1}}) - c(X_{\tau_k}, Y_{\tau_k}) \;\middle|\; \boldsymbol{\mathcal{F}}_{\tau_k} \right].
	\]
	By Lemma \ref{lem:Xi-lem} and \eqref{def:Ta}, it holds $\mathbf{P}$-a.s. for each $k$, 
	\[
	\mathbf{E}\left[ c(X_{\tau_{k+1}}, Y_{\tau_{k+1}}) - c(X_{\tau_k}, Y_{\tau_k}) \mid \boldsymbol{\mathcal{F}}_{\tau_k} \right] \ge \Theta(h_n, X_{\tau_k}, Y_{\tau_k})\cdot h_n
	\]
	Inserting these bound into the definition of $\Psi_n$, we have
	\[
	\Psi_n \ge \sum_{k=0}^{n-1} \Theta(h_n, X_{\tau_k}, Y_{\tau_k}) \cdot h_n \quad \mathbf{P}\text{-a.s.}
	\]
	
	Define $\tau(u) = \tau_n(u) := \max\{ \tau_k : \tau_k \le u,\; 0 \le k \le n-1 \}$. Then the sum can be rewritten as an integral:
	\begin{align}\label{eq:4.2}
		\Psi_n \ge \int_s^t \Theta(h_n, X_{\tau(u)}, Y_{\tau(u)}) \, du.
	\end{align}
	Since the two processes $\{X_t\}$ and $\{Y_t\}$ are cadlag almost surely under $\mathbb{P}^\mu$ and $\tilde{\mathbb{P}}^{\nu}$, respectively, the coupled process $\{(X_t, Y_t)\}$ is also cadlag under the coupling measure $\mathbf{P}$. Moreover, as $n \to \infty$, we have $\tau(u) \nearrow u$, and thus
	\[
	\lim_{n \to \infty} (X_{\tau(u)}, Y_{\tau(u)}) = (X_{u-}, Y_{u-})=(X_u, Y_u) \quad \mathbf{P}\text{-a.s.}
	\]
	By the definition of the half-relaxed transport derivative $\omega_c^-$ from \eqref{def:half-relaxed}, it follows that
	\[
	\liminf_{n \to \infty} \Theta(h_n, X_{\tau(u)}, Y_{\tau(u)}) \ge \omega_c^-(X_u, Y_u) \quad \text{for a.e. } u \in [s, t],\; \mathbf{P}\text{-a.s.}
	\]
	
	Now combining \eqref{eq:4.1} and \eqref{eq:4.2}, for every $n \ge 1$ we have
	\[
	\mathbf{E}[c(X_t, Y_t) - c(X_s, Y_s) \mid \boldsymbol{\mathcal{F}}_s] \ge \mathbf{E}\left[ \int_s^t \Theta(h_n, X_{\tau(u)}, Y_{\tau(u)}) \, du \;\middle|\; \boldsymbol{\mathcal{F}}_s \right].
	\]
	Taking $\liminf$ as $n \to \infty$ on both sides and applying Fatou's lemma for conditional expectations, we obtain
	\begin{align*}
		\mathbf{E}[c(X_t, Y_t) - c(X_s, Y_s) \mid \boldsymbol{\mathcal{F}}_s]
		&\ge \liminf_{n \to \infty} \mathbf{E}\left[ \int_s^t \Theta(h_n, X_{\tau(u)}, Y_{\tau(u)}) \, du \;\middle|\; \boldsymbol{\mathcal{F}}_s \right] \\
		&\ge \mathbf{E}\left[ \liminf_{n \to \infty} \int_s^t \Theta(h_n, X_{\tau(u)}, Y_{\tau(u)}) \, du \;\middle|\; \boldsymbol{\mathcal{F}}_s \right] \\
		&\ge \mathbf{E}\left[ \int_s^t \liminf_{n \to \infty} \Theta(h_n, X_{\tau(u)}, Y_{\tau(u)}) \, du \;\middle|\; \boldsymbol{\mathcal{F}}_s \right]\\
		&\ge \mathbf{E}\left[ \int_s^t \omega_c^-(X_u, Y_u) \, du \;\middle|\; \boldsymbol{\mathcal{F}}_s \right].
	\end{align*}
	This is precisely \eqref{eq:goal4.1}, and hence the proof is complete.
	
	The two applications of Fatou's lemma above (the second and third inequalities) are legitimate provided the integrand is bounded below by a $\mathbf{P}$-integrable function. 
	By Assumption \ref{asp:C}, $\Theta(h, x, y) \ge f(x) + g(y)$, so
	\[
	\int_s^t \Theta(h_n, X_{\tau(u)}, Y_{\tau(u)}) \, du \ge \int_s^t \bigl[ f(X_{\tau(u)}) + g(Y_{\tau(u)}) \bigr] \, du.
	\]
	Since $f(X_{\tau(u)}) \ge -\sup_{r \in [s, t]} |f(X_r)|$ and similarly for $g$, we obtain the uniform lower bound
	\[
	\int_s^t \Theta(h_n, X_{\tau(u)}, Y_{\tau(u)}) \, du \ge -(t-s) \left( \sup_{r \in [s, t]} |f(X_r)| + \sup_{r \in [s, t]} |g(Y_r)| \right).
	\]
	The integrability condition from \ref{C2} guarantees that the right-hand side is $\mathbf{P}$-integrable, and hence the first application of Fatou's lemma (conditional expectation) is valid. The second application (inside the integral) follows immediately from the same pointwise lower bound, which is independent of $n$ and integrable in $u$.
\end{proof}

\subsection{Proof of Theorem \ref{main}}
We now turn our attention to Theorem \ref{main}. 
We assume the same setting from the previous subsection, except now the cost $c: \mathbb{R}^d \times \mathbb{R}^d \to \mathbb{R}_+$ is the quadratic cost $c(x,y) = \frac12 |x-y|^2$, and $\{X_t\}, \{Y_t\}$ are L\'evy processes on $\mathbb{R}^d$ with finite second moments under measures $\mbp^0,\tilde\mbp^0$, with generators $\mathcal{A}, \mathcal{B} \in \mathcal{G}_2^\Lambda(\mathbb{R}^d)$.

Before proceeding to the proof, we state the \emph{strong duality} for transport derivatives, which will be a key ingredient. Recall the setting from Section \ref{sec:2-opt-gen}. Let $\mathcal{A}, \mathcal{B} \in \mathcal{G}_2^\Lambda(\mathbb{R}^d)$ be two L\'evy generators with finite second moments. Recall the notion of L\'evy transport derivative $\ta^\La:\mbr^d\times\mbr^d\to\mbr$ from Definition \ref{def:levy-deriv}. We now introduce the corresponding dual quantity. In what follows, denote by $C_2(\mathbb{R}^d)$ the space of all continuous functions $f: \mathbb{R}^d \to \mathbb{R}$ satisfying a quadratic growth bound $|f(x)| \le C(1+|x|^2)$ for some $C \ge 0$, and by $C_2^2(\mathbb{R}^d) \subset C_2(\mathbb{R}^d)$ the subspace of twice differentiable functions $f$ with $\|D^2 f\|_\infty < \infty$.

\begin{definition}[Dual transport derivatives]\label{def:dual-deriv}
	Let $\mathcal{A}, \mathcal{B} \in \mathcal{G}_2^\Lambda(\mathbb{R}^d)$ be two L\'evy generators with finite second moments. The \emph{dual transport derivative} of $\mathcal{A}, \mathcal{B}$ is defined as 
	\[
	\omega'(x,y) := \sup_{(\varphi,\psi)} \bigl[ \mathcal{A}\varphi(x) + \mathcal{B}\psi(y) \bigr], \qquad (x,y) \in \mathbb{R}^d,
	\]
	where the supremum is taken over all pairs $(\varphi,\psi) \in C_2^2(\mathbb{R}^d)^2$ such that $c - (\varphi \oplus \psi)$ attains a global minimum at $(x,y)$.
\end{definition}

The strong duality is stated as follows.
We remark that the identity above was proved in \cite{KangLim2025}. A new but elementary proof will be provided in Section \ref{sec:3}.

\begin{proposition}[Strong duality]\label{thm:strongdual}
	Let $\mathcal{A}, \mathcal{B} \in \mathcal{G}_2^\Lambda(\mathbb{R}^d)$ be two L\'evy generators with finite second moments, and let $\theta^\Lambda, \omega'$ be their L\'evy and dual transport derivatives, respectively. Then
	\[
	\theta^\Lambda(x,y) = \omega'(x,y) \qquad \text{for all } (x,y) \in \mathbb{R}^d.\]
\end{proposition}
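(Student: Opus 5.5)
The plan is to prove the two inequalities $\theta^\Lambda\ge\omega'$ (weak duality) and $\theta^\Lambda\le\omega'$ separately. Weak duality comes from a maximum-principle argument. The reverse inequality comes from writing $\theta^\Lambda(x,y)$ as an inf–sup of a Lagrangian that is bilinear in the generator and the test pair, and then interchanging inf and sup with the generalized minimax principle from the appendix.

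\emph{Weak duality.} Fix $(x,y)$, a coupling generator $\mathcal{J}\in\Gamma^\Lambda(\mathcal{A},\mathcal{B})$, and a pair $(\varphi,\psi)\in C_2^2(\mathbb{R}^d)^2$ such that $f:=c-\varphi\oplus\psi$ attains its global minimum at $(x,y)$.
\begin{enumerate}
\item Since $\mathcal{J}\in\mathcal{G}_2^\Lambda(\mathbb{R}^{2d})$, the untruncated representation \eqref{eq:LK-rep-op} makes sense for all $C^2$ functions with bounded Hessian and quadratic growth. In particular $\mathcal{J}f(x,y)$ is well defined, because the L\'evy measure has a finite second moment.
\item The marginal identities $\mathcal{J}(\varphi\otimes1)=(\mathcal{A}\varphi)\otimes1$ and $\mathcal{J}(1\otimes\psi)=1\otimes\mathcal{B}\psi$ pass from $C_b^2$ to $C_2^2$. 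I would justify this by truncating $\varphi\chi_R$ with a cutoff $\chi_R$ and using dominated convergence in the jump integral, where the dominating function is of order $|z|^2$.
\item At a global minimum of $f$ we have $\nabla f(x,y)=0$, $\nabla^2 f(x,y)\ge0$, and $f(\cdot+w)-f(x,y)\ge0$ for every $w$. Inserting these into \eqref{eq:LK-rep-op} gives $\mathcal{J}f(x,y)\ge0$.
\item Combining the two previous steps, $\mathcal{J}c(x,y)\ge \mathcal{A}\varphi(x)+\mathcal{B}\psi(y)$. Taking the infimum over $\mathcal{J}$ and the supremum over $(\varphi,\psi)$ gives $\theta^\Lambda\ge\omega'$.
\end{enumerate}

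\emph{Reverse inequality via minimax.} Let $X$ be the convex cone of all L\'evy generators in $\mathcal{G}_2^\Lambda(\mathbb{R}^{2d})$, parametrized by triplets $(\kappa,\alpha,\Theta)$, and let $Y=C_2^2(\mathbb{R}^d)^2$ with no constraints. Define
\[
I(\mathcal{J},(\varphi,\psi)):=\mathcal{J}\bigl(c-\varphi\oplus\psi\bigr)(x,y)+\mathcal{A}\varphi(x)+\mathcal{B}\psi(y),
\]
which is affine in each variable.
\begin{enumerate}
\item \emph{The primal side is $\theta^\Lambda$.} If $\mathcal{J}$ has the correct marginals, then $I=\mathcal{J}c(x,y)$ for every $(\varphi,\psi)$. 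If it does not, I expect the marginal discrepancy to be detectable by a single $C_2^2$ function. Scaling that function $\lambda\varphi$ with $\lambda\to\pm\infty$ then drives the supremum to $+\infty$. This should follow from Lemma \ref{lem:marg-char}, testing with exponentials and quadratics, and using translation invariance to move the evaluation point. Hence $\inf_X\sup_Y I=\theta^\Lambda(x,y)$.
\item \emph{The dual side is $\omega'$.} For fixed $(\varphi,\psi)$, compute $\inf_{\mathcal{J}\in X}\mathcal{J}f(x,y)$ with $f=c-\varphi\oplus\psi$. If $f$ does not attain its global minimum at $(x,y)$, the infimum is $-\infty$: either some $w$ has $f(x,y)+\dots$ with $f((x,y)+w)<f(x,y)$, and a jump measure $\lambda\delta_w$ with $\lambda\to\infty$ does it, or $\nabla f(x,y)\ne0$, and a drift pointing against the gradient does it. If the minimum is attained there, the infimum is $0$, achieved by $\mathcal{J}=0$ and bounded below by the weak-duality argument. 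Therefore $\sup_Y\inf_X I=\omega'(x,y)$.
\end{enumerate}

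\emph{Main obstacle: justifying $\inf_X\sup_Y I=\sup_Y\inf_X I$.} Neither $X$ nor $Y$ is compact, so classical Sion does not apply directly; this is where the generalized minimax principle of the appendix is needed.
\begin{enumerate}
\item I would first reduce to $(x,y)=(0,0)$ using the affine structure \eqref{eq:theta-affine}. For this I need to check that $\omega'$ transforms in the same way, by translating the test functions and adding linear terms.
\item On the $\mathcal{J}$ side I would restrict to a sublevel set $\{\mathcal{J}:\mathcal{J}c(0,0)\le M\}$ intersected with generators whose marginals have second moments bounded by those of $\mathcal{A},\mathcal{B}$.
\item On this set I would use the topology of weak convergence of the finite measures $|z|^2\,d\Theta$, with the Gaussian part absorbed at $z=0$, together with convergence of the drifts. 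Tightness should follow as in Proposition \ref{prop:minimizer}.
\item The map $\mathcal{J}\mapsto I(\mathcal{J},\cdot)$ should be lower semicontinuous by Portmanteau. The uncertain point is whether the minimizing generators stay in this compact region while $\sup_Y$ is taken; I would control this with a coercivity estimate showing that $\sup_Y I$ blows up outside the region.
\end{enumerate}
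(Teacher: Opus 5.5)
Your overall architecture matches the paper: reduction to $(0,0)$, the bilinear Lagrangian, and its two indicator identities. Your direct weak-duality argument is also fine. The gap is the step you leave as ``uncertain'', which is where the proof actually lives, and your set-up cannot be completed there.

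\textbf{Unconstrained $Y$ blocks the minimax hypotheses.} With $Y=C_2^2(\mathbb{R}^d)^2$ unconstrained, \emph{no} topology on $X=\mathcal{G}_2^\Lambda(\mathbb{R}^{2d})$ makes every $I(\cdot,y)$ l.s.c.\ while some $\max_{y\in K_0}I(\cdot,y)$, with $K_0$ finite, is inf-compact. Let $\mathcal{J}_n$ be the pure-jump generator with L\'evy measure $n^{-2}\delta_{nw}$, $w=(w_1,w_2)$, $w_1\neq0$, so $\mathcal{J}_ng(0)=n^{-2}[g(nw)-g(0)-nw^\top\nabla g(0)]$.
\begin{enumerate}
\item For each $y$, $I(\mathcal{J}_n,y)$ is bounded in $n$, by $\tfrac12\|D^2(c-\varphi\oplus\psi)\|_\infty|w|^2$ plus a constant. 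So $\{\mathcal{J}_n\}$ lies in every such sublevel set and has a cluster point $\mathcal{J}^*$.
\item Take $y=(\lambda\varphi,0)$. The inequality $I(\mathcal{J}^*,y)\le\lim_nI(\mathcal{J}_n,y)$ is affine in $\lambda$ and must hold for all $\lambda\in\mathbb{R}$. This forces $\mathcal{J}^*(\varphi\otimes1)(0)=\lim_n\mathcal{J}_n(\varphi\otimes1)(0)$.
\item That limit is $0$ for every $\varphi\in C_b^2$, so by translation invariance the first marginal of $\mathcal{J}^*$ is the zero generator. Yet the limit equals $\tfrac12|w_1|^2\neq0$ for $\varphi=\tfrac12|x|^2$, a contradiction.
\end{enumerate}

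\textbf{Your two proposed remedies do not work.}
\begin{itemize}
\item Your topology (weak convergence of $|z|^2\,d\Theta$ plus drifts) aims at the l.s.c.\ half, but then compactness fails. Here $|z|^2\,d\Theta_n=|w|^2\delta_{nw}$ escapes to infinity. Finitely many test functions of quadratic growth only bound second moments, never tightness of $|z|^2\,d\Theta$. The tightness in Proposition~\ref{prop:minimizer} concerns the probability measures, not these.
\item Restricting $X$ to a bounded region breaks your dual-side identity. The claim $\inf_X I=-\infty$ off $\mathcal{D}(0,0)$ uses $\lambda\mathcal{J}$ with $\lambda\to\infty$. On a restricted $X'$ you only get $\sup_Y\inf_{X'}I\ge\omega'$, the wrong direction.
\item ``$\sup_Y I$ blows up outside the region'' is vacuous, since $\sup_YI=+\infty$ off $\Gamma^\Lambda(\mathcal{A},\mathcal{B})$ anyway. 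The minimax principle needs coercivity of finitely many $I(\cdot,y)$, not of their supremum.
\end{itemize}

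\textbf{The missing idea is to shrink $Y$, not $X$.} The paper takes $Y$ to be the pairs for which $c-\varphi\oplus\psi$ is bounded below.
\begin{itemize}
\item This $Y$ still contains $\mathcal{D}(0,0)$ and $C_b^2\times C_b^2$, so both indicator identities survive. For the primal one, $C_b^2$ test functions suffice by the definition of $\Gamma^\Lambda$, so exponentials and quadratics are unnecessary.
\item $Y$ is no longer symmetric under $y\mapsto-y$, which removes the obstruction above.
\item The paper topologizes $X$ by weak convergence of $\delta_0e^{\mathcal{J}}$. L.s.c.\ of $\mathcal{J}\mapsto\mathcal{J}f(0)$ for lower-bounded $f\in C_2^2$ follows from Portmanteau (Proposition~\ref{prop:liminf-gen}). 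Continuity in $(\varphi,\psi)$ is Lemma~\ref{lem:upper-semi}.
\item Inf-compactness comes from the $4d$ pairs $(\varphi_e,\varphi),(\varphi,\varphi_e)$, with $\varphi=-\tfrac12|x|^2$, $\varphi_e=\varphi-e^\top x$, $e\in\{\pm e_k\}$. Their sublevel sets bound the mean and second moment of $\delta_0e^{\mathcal{J}}$, hence give tightness in this weaker topology (Lemma~\ref{lem:levy-compact}).
\end{itemize}
In that topology your $\mathcal{J}_n$ converges to $0$. This is consistent with l.s.c., since $\liminf_n\mathcal{J}_nf(0)\ge0$ whenever $f$ is bounded below.
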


To apply Theorem \ref{thm:opt-immersion}, we need to establish the conditions from Assumption \ref{asp:C} for the two L\'evy processes $\{X_t\}, \{Y_t\}$. We recall the notion of the finite difference $\Theta: \mathbb{R}_+ \times (\mathbb{R}^d)^2 \to \mathbb{R}$ of the transport cost from \eqref{def:Ta} between two L\'evy processes. Denote $m, \tilde m \in \mathbb{R}^d$ the mean vectors of the two L\'evy processes (see \eqref{def:mean-vector}:
\[
m = m_\mA= \mathbb{E}^0[X_1], \qquad \tilde m = m_\mB = \tilde{\mathbb{E}}^0[Y_1].
\]

\begin{lemma}\label{lem:Ta-prelim}
	For all $t \ge 0$ and $(x,y) \in \mathbb{R}^{2d}$, the following lower bound holds:
	\begin{align}\label{ineq:Ta}
		\Theta(t,x,y) \ge (x-y)^\top (m - \tilde m) + t\,c(m, \tilde m). 
	\end{align}
	Moreover, $\Theta(t,\cdot,\cdot) \to \theta^\Lambda$ as $t \searrow 0$ in the following sense: for any sequences $\{(t_n,x_n,y_n)\}_n\subset \mbr_+\times \mbr^{2d}$ such that $(t_n,x_n,y_n)\to(0,x,y)$, it holds
	\begin{align*}
		\lim_{n\to\infty} \Ta(t_n,x_n,y_n)= \ta^\La(x,y). 
	\end{align*}
	In particular, the half-relaxed transport derivative satisfies $\omega_c^- = \theta^\Lambda$. 
\end{lemma}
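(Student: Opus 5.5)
The plan is to exploit translation invariance of L\'evy processes and the special structure of the quadratic cost, so that everything reduces to the point $(0,0)$. The basic identity I will use is
\[
c(u+a,v+b) = c(u,v) + (u-v)^\top(a-b) + c(a,b).
\]
Integrating it against any coupling $\gamma\in\Gamma(\mu_t^0,\nu_t^0)$, with $(a,b)=(x,y)$, gives
\[
\int c\,d\big((u,v)\mapsto(u+x,v+y)\big)_\sharp\gamma = \int c\,d\gamma + t(x-y)^\top(m-\tilde m) + c(x,y).
\]
Here I used that the marginal means are $tm$ and $t\tilde m$. The correction term does not depend on $\gamma$, and translation is a bijection between $\Gamma(\mu_t^0,\nu_t^0)$ and $\Gamma(\mu_t^x,\nu_t^y)$. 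Taking the infimum therefore yields
\[
\Xi(t,x,y)=\Xi(t,0,0)+t(x-y)^\top(m-\tilde m)+c(x,y),
\]
that is,
\begin{align}\label{eq:plan-shift}
\Theta(t,x,y)=\Theta(t,0,0)+(x-y)^\top(m-\tilde m).
\end{align}
Comparing with \eqref{eq:theta-affine}, it then suffices to prove $\Theta(t,0,0)\ge t\,c(m,\tilde m)$ and $\Theta(t,0,0)\to\theta^\Lambda(0,0)$ as $t\searrow0$. The statement about sequences $(t_n,x_n,y_n)\to(0,x,y)$ follows because the affine term in \eqref{eq:plan-shift} is continuous in $(x,y)$. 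The identity $\omega_c^-=\theta^\Lambda$ then follows directly from definition \eqref{def:half-relaxed}, since every admissible sequence has the same limit.

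\textbf{Lower bound \eqref{ineq:Ta}.} The cost $c$ is convex, so Jensen's inequality applied to any $\gamma\in\Gamma(\mu_t^0,\nu_t^0)$ gives
\[
\int c\,d\gamma\ge c(tm,t\tilde m)=t^2c(m,\tilde m).
\]
Hence $\Xi(t,0,0)\ge t^2c(m,\tilde m)$, and dividing by $t$ gives $\Theta(t,0,0)\ge t\,c(m,\tilde m)$. Combined with \eqref{eq:plan-shift}, this is exactly \eqref{ineq:Ta}.

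\textbf{Upper limit.} Since the optimal L\'evy coupling $\gamma^*_t$ belongs to $\Gamma(\mu_t^0,\nu_t^0)$, we have $\Xi(t,0,0)\le\int c\,d\gamma^*_t$. By \eqref{eq:semigroup-exp} this equals $t\theta^\Lambda(0,0)+t^2c(m,\tilde m)$. Thus $\Theta(t,0,0)\le \theta^\Lambda(0,0)+t\,c(m,\tilde m)$, and so $\limsup_{t\searrow0}\Theta(t,0,0)\le\theta^\Lambda(0,0)$.

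\textbf{Lower limit.} Here I use duality. Fix $(\varphi,\psi)\in C^2_2(\mathbb{R}^d)^2$ such that $c-(\varphi\oplus\psi)$ attains its global minimum at $(0,0)$. Subtracting a constant, I may assume $\varphi\oplus\psi\le c$ with equality at $(0,0)$. Integrating $\varphi\oplus\psi\le c$ against any coupling gives $\Xi(t,0,0)\ge T_t\varphi(0)+\tilde T_t\psi(0)$. This is valid because quadratic growth and finite second moments make everything integrable, so only the trivial direction of Kantorovich is needed. Using $\varphi(0)+\psi(0)=c(0,0)=0$, I get
\[
\Theta(t,0,0)\ge \frac{T_t\varphi(0)-\varphi(0)}{t}+\frac{\tilde T_t\psi(0)-\psi(0)}{t}\ \longrightarrow\ \mathcal{A}\varphi(0)+\mathcal{B}\psi(0).
\]
Taking the supremum over such pairs and invoking Proposition \ref{thm:strongdual} gives $\liminf_{t\searrow0}\Theta(t,0,0)\ge\omega'(0,0)=\theta^\Lambda(0,0)$.

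The main obstacle is justifying the convergence $(T_t\varphi(0)-\varphi(0))/t\to\mathcal{A}\varphi(0)$ for $\varphi\in C^2_2$, which is unbounded with quadratic growth, whereas \eqref{eq:generator} is stated only on $C_b^2$. I would handle this via Dynkin's formula $T_t\varphi(0)-\varphi(0)=\int_0^t T_s\mathcal{A}\varphi(0)\,ds$. This formula holds because $\varphi\in C_2^2$ and finite second moments make $\mathcal{A}\varphi$ of at most quadratic growth, and the local martingale is a true martingale by the $L^2$ bounds. It then remains to show $T_s\mathcal{A}\varphi(0)\to\mathcal{A}\varphi(0)$, which follows from continuity of $\mathcal{A}\varphi$ and uniform integrability of $|X_s|^2$ as $s\to0$. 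Alternatively, one can Taylor-expand and split $\varphi$ into a quadratic part, computed exactly by Lemma \ref{lem:quad-linear}, and a remainder with bounded Hessian, to which a truncation argument applies.
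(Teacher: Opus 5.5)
Your proof is correct. It uses the same three ingredients as the paper: Jensen for \eqref{ineq:Ta}, the optimal L\'evy coupling via \eqref{eq:semigroup-exp} for the upper limit, and a touching pair combined with Proposition~\ref{thm:strongdual} for the lower limit. The organization is genuinely different, though, and the difference matters.

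\emph{How the paper argues.} The paper never reduces to the origin. It proves $c(x+tm,y+t\tilde m)\le\Xi(t,x,y)\le(e^{t\mathcal{J}_*}c)(x,y)$ at every point, getting the upper bound through dual pairs rather than by inserting $\gamma^*_t$ directly. This yields \eqref{ineq:Ta} and the $\limsup$ along $(t_n,x_n,y_n)$. For the $\liminf$, it fixes a pair touching $c$ at the limit point $(x,y)$. It then applies
\[
\Theta(t,x',y')\ge t^{-1}\bigl(e^{t\mathcal{A}}\varphi(x')-\varphi(x')\bigr)+t^{-1}\bigl(e^{t\mathcal{B}}\psi(y')-\psi(y')\bigr)
\]
at $(x',y')=(x_n,y_n)$. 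That inequality requires $\varphi(x')+\psi(y')=c(x',y')$, which holds only at the touching point. At $(x_n,y_n)$ one must subtract the nonnegative term $t_n^{-1}(c-\varphi\oplus\psi)(x_n,y_n)$. Since $t_n$ may tend to $0$ much faster than $(x_n,y_n)\to(x,y)$, this term need not vanish and can even diverge. So the paper's $\liminf$ step, as written, is not justified.

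\emph{What your reduction buys.} Your exact identity $\Theta(t,x,y)=\Theta(t,0,0)+(x-y)^\top(m-\tilde m)$ removes this problem. The paper records such translation identities for $e^{t\mathcal{J}_*}c$, $\theta^\Lambda$ and $\omega'$, but not for $\Xi$ itself. With your identity, the joint limit collapses to the one-variable limit of $\Theta(t,0,0)$. There the touching pair is only evaluated at its touching point, and \eqref{eq:theta-affine} carries the result back to $(x,y)$.

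Two smaller points also favour your version. First, you use only weak duality, integrating $\varphi\oplus\psi\le c$ against couplings. This is what is actually valid for unbounded $C_2^2$ test functions, whereas the paper invokes the $C_b$-formulation \eqref{eq:Kantorovich}. Second, you justify $t^{-1}(T_t\varphi(0)-\varphi(0))\to\mathcal{A}\varphi(0)$ for $\varphi\in C_2^2$ via Dynkin's formula and $\mathbb{E}|X_s|^2\to 0$. The paper merely asserts this as local uniform convergence.
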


\begin{proof}
	We first establish bounds for $\Xi$:
	\begin{align}\label{ineq:Xi1}
		c(x + tm, y + t\tilde m) \le \Xi(t,x,y) \le (e^{t\mathcal{J}_*}c)(x,y),
	\end{align}
	where $\mathcal{J}_*$ is an optimal coupling generator of the pair $\mathcal{A}, \mathcal{B}$ given by Proposition \ref{prop:generator_optimality}.
	To prove the upper bound, note that for any $(\varphi,\psi) \in C_2(\mathbb{R}^d)^2$ satisfying $\varphi \oplus \psi \le c$, and for any L\'evy coupling generator $\mathcal{J} \in \Gamma^\Lambda(\mathcal{A}, \mathcal{B})$,
	\begin{align*}
		(e^{t\mathcal{A}}\varphi)(x) + (e^{t\mathcal{B}}\psi)(y)
		= (e^{t\mathcal{J}}(\varphi \oplus \psi))(x,y)
		\le (e^{t\mathcal{J}}c)(x,y).
	\end{align*}
	Taking the supremum over all such pairs $(\varphi,\psi)$, Kantorovich duality \eqref{eq:Kantorovich} implies $\Xi(t,x,y) \le (e^{t\mathcal{J}}c)(x,y)$. 
	Choosing $\mathcal{J} = \mathcal{J}_*$ yields the desired inequality.
	
	For the lower bound, we use a Jensen-type argument. Fix $a \in \mathbb{R}^d$ and $b \in \mathbb{R}$ such that the affine function $\ell_{a,b}: \mathbb{R}^{2d} \to \mathbb{R}$ given by
	\[
	\ell_{a,b}(u,v) := a^\top (u - v) + b
	\]
	satisfies $\ell_{a,b} \le c$ on $\mathbb{R}^{2d}$. Writing $\ell_{a,b} = \varphi \oplus \psi$ for suitable affine functions $\varphi, \psi$, Kantorovich duality yields
	\begin{align*}
		\Xi(t,x,y)
		&\ge (e^{t\mathcal{A}}\varphi)(x) + (e^{t\mathcal{B}}\psi)(y)\\
		&= a^\top[(x + tm) - (y + t\tilde m)] + b
		= \ell_{a,b}(x + tm, y + t\tilde m).
	\end{align*}
	Taking the supremum over all $(a,b)$ such that $\ell_{a,b} \le c$ and using the convexity of $z \mapsto \frac12|z|^2$, we obtain the lower bound in \eqref{ineq:Xi1}. This completes the proof of \eqref{ineq:Xi1}.
	
	Now from \eqref{ineq:Xi1}, subtracting $c(x,y)$ and dividing by $t > 0$ gives
	\begin{align*}
		\frac{c(x + tm, y + t\tilde m) - c(x,y)}{t} \le \Theta(t,x,y) \le \frac{e^{t\mathcal{J}_*}c(x,y) - c(x,y)}{t}.
	\end{align*}
	Using the identity
	\(
	c(x + x', y + y') = c(x,y) + (x - y)^\top (x' - y') + c(x', y'),
	\)
	and the fact that $e^{t\mathcal{J}_*}c = c + t\theta^\Lambda + t^2 c(m,\tilde m)$ (see \eqref{eq:semigroup-exp}) we arrive at
	\begin{align}\label{ineq:Ta2}
		(x - y)^\top (m - \tilde m) + t\,c(m, \tilde m)
		\le \Theta(t,x,y)
		\le \theta^\Lambda(x,y) + t\,c(m, \tilde m).
	\end{align}
	The first inequality is the desired lower bound \eqref{ineq:Ta}.
	
	We now prove the continuous convergence $\Theta(t,\cdot,\cdot) \to \theta^\Lambda$ as $t \searrow 0$. Consider a sequence $\{(t_n, x_n, y_n)\}_n \subset \mathbb{R}_+ \times \mathbb{R}^{2d}$ such that $(t_n, x_n, y_n) \to (0, x, y)$. From the upper bound in \eqref{ineq:Ta2} and the continuity of $\theta^\Lambda$,
	\begin{align}\label{ineq:limsup}
		\limsup_{n \to \infty} \Theta(t_n, x_n, y_n) \le \theta^\Lambda(x,y).
	\end{align}
	To obtain the corresponding lower bound, fix $\varepsilon > 0$. By the definition of the dual transport derivative $\omega'(x,y)$ (Definition \ref{def:dual-deriv}) and the strong duality $\theta^\Lambda = \omega'$ (Proposition \ref{prop:strongdual}), choose $(\varphi,\psi) \in C_2^2(\mathbb{R}^d)^2$ such that $\varphi \oplus \psi \le c$, $\varphi(x) + \psi(y) = c(x,y)$, and
	\[
	(\mathcal{A}\varphi)(x) + (\mathcal{B}\psi)(y) > \theta^\Lambda(x,y) - \varepsilon.
	\]
	By Kantorovich duality \eqref{eq:Kantorovich}, $\Xi(t,x,y) \ge e^{t\mathcal{A}}\varphi(x) + e^{t\mathcal{B}}\psi(y)$, and hence for all $t \ge 0$, $x, y \in \mathbb{R}^d$,
	\[
	\Theta(t,x,y) \ge \frac{e^{t\mathcal{A}}\varphi(x) - \varphi(x)}{t} + \frac{e^{t\mathcal{B}}\psi(y) - \psi(y)}{t}.
	\]
	Since $\varphi, \psi \in C_2^2(\mathbb{R}^d)$, both difference quotients converge locally uniformly to $\mathcal{A}\varphi$ and $\mathcal{B}\psi$, respectively, as $t \searrow 0$. Therefore,
	\begin{align*}
		\liminf_{n \to \infty} \Theta(t_n, x_n, y_n) \ge (\mathcal{A}\varphi)(x) + (\mathcal{B}\psi)(y) > \theta^\Lambda(x,y) - \varepsilon.
	\end{align*}
	As $\varepsilon > 0$ is arbitrary, this yields
	\[
	\liminf_{n \to \infty} \Theta(t_n, x_n, y_n) \ge \theta^\Lambda(x,y).
	\]
	Together with \eqref{ineq:limsup}, this proves the desired convergence. It follows from the definition of half-relaxed transport derivative \eqref{def:half-relaxed} that $\om^-=\ta^\La$.
\end{proof}

We may now complete the proof of Theorem \ref{main}.

\begin{proof}[Proof of Theorem \ref{main}]
	As given in the theorem, let $\{X_t\}_{t\ge 0}$ and $\{Y_t\}_{t\ge 0}$ be two L\'evy processes defined on the filtered probability spaces $(\Omega, \mathcal{F}, \mathfrak{F}, \mathbb{P}^0)$ and $(\tilde\Omega, \tilde{\mathcal{F}}, \tilde{\mathfrak{F}}, \tilde{\mathbb{P}}^0)$, respectively. By Proposition \ref{levyspecialcase}, let $\mathbf{P}_*^{(0,0)}$ be the measure on the product filtered space such that $\{(X_t, Y_t)\}_{t\ge 0}$ is $c$-optimal among all L\'evy couplings, in the sense of that proposition, where $c(x,y) = \frac12 |x-y|^2$. Note that $\mathbf{P}_*^{(0,0)}$ is an immersion coupling of $\mathbb{P}^0$ and $\tilde{\mathbb{P}}^0$. 
	We will show that $\mathbf{P}_*^{(0,0)}$ is both infinitesimally and dynamically $c$-optimal. 
	
	\medskip
	
	\emph{$\mathbf{P}_*^{(0,0)}$ is infinitesimally optimal.} 
	Let $\mathcal{J}_*$ be the L\'evy generator associated with the coupled process $\{(X_t, Y_t)\}_{t\ge 0}$ under $\mathbf{P}_*^{(0,0)}$. By the martingale characterization of the generator, the following process is a $(\boldsymbol{\mathfrak{F}}, \mathbf{P}_*^{(0,0)})$-martingale:
	\[
	c(X_t, Y_t) - \int_0^t (\mathcal{J}_* c)(X_u, Y_u) \, du.
	\]
	By Proposition \ref{prop:generator_optimality} and Theorem \ref{thm:strongdual}, we have $\mathcal{J}_* c = \theta^\Lambda = \omega_c^-$. Hence we conclude that
	\begin{align}\label{def:cost-process}
		c(X_t, Y_t) - \int_0^t \omega_c^-(X_u, Y_u) \, du
	\end{align}
	is a $(\boldsymbol{\mathfrak{F}}, \mathbf{P}_*^{(0,0)})$-martingale. 
	
	To complete the proof of infinitesimal optimality, it remains to show that for any coupling $\mathbf{P}$ of $\mathbb{P}^0$ and $\tilde{\mathbb{P}}^0$, the process \eqref{def:cost-process} is a $(\boldsymbol{\mathfrak{F}}, \mathbf{P})$-submartingale. We will invoke Theorem \ref{thm:opt-immersion}. First, by Lemma \ref{lem:Ta-prelim}, the finite difference quotient of the transport cost satisfies the following lower bound for all $t \in [0,1]$ and $(x,y) \in \mathbb{R}^d$:
	\begin{align*}
		\frac{\Xi(t,x,y) - c(x,y)}{t} \ge f(x) + g(y),
	\end{align*}
	with 
	\[
	f(x) = x^\top (m - \tilde m) + c(m, \tilde m),\qquad
	g(y) = -y^\top (m - \tilde m) + c(m, \tilde m),
	\]
	where $m = \mathbb{E}^0[X_1]$ and $\tilde m = \tilde{\mathbb{E}}^0[Y_1]$ are the mean vectors. We verify that $f$ and $g$ satisfy condition (C2). Indeed, for any finite $s \le t$, using the basic inequality $2|x^\top y| \le |x|^2 + |y|^2$,
	\begin{align*}
		\mathbb{E}^0\left[ \sup_{u \in [s,t]} |f(X_u)| \right] 
		&\le c(m, \tilde m) + \mathbb{E}^0\left[ \sup_{u \in [s,t]} |X_u^\top (m - \tilde m)| \right] \\
		&\le 2c(m, \tilde m) + \frac12 \mathbb{E}^0\left[ \sup_{u \in [s,t]} |X_u|^2 \right] < \infty,
	\end{align*}
	since L\'evy processes with finite second moments have integrable running maximum (by Doob's inequality). The same estimate holds for $g$ with $Y_u$. Thus assumptions (C1)-(C2) are satisfied, and Theorem \ref{thm:opt-immersion} implies that for any coupling $\mathbf{P}$, the process \eqref{def:cost-process} is a submartingale. Hence $\mathbf{P}_*^{(0,0)}$ is infinitesimally optimal.

	\emph{$\mathbf{P}_*^{(0,0)}$ is dynamically optimal.}
	Fix any immersion coupling $(\mathbf{P}, \mathbf{E})$ of $\mathbb{P}^0$ and $\tilde{\mathbb{P}}^0$. As shown above, $\mathbf{P}_*^{(0,0)}$ is infinitesimally optimal, and hence \eqref{def:cost-process} is a $(\boldsymbol{\mathfrak{F}}, \mathbf{P}_*^{(0,0)})$-martingale. Moreover, by Theorem \ref{thm:opt-immersion}(i), \eqref{def:cost-process} is a $(\boldsymbol{\mathfrak{F}}, \mathbf{P})$-submartingale for any coupling $\mathbf{P}$. Consequently, for all $0 \le s \le t$,
	\begin{align}\label{eq:conditional-sub}
		\mathbf{E}[c(X_t, Y_t) \mid \boldsymbol{\mathcal{F}}_s] \ge c(X_s, Y_s) + \mathbf{E}\left[ \int_s^t \omega_c^-(X_u, Y_u) \, du \;\middle|\; \boldsymbol{\mathcal{F}}_s \right].
	\end{align}
	
	By Lemma \ref{lem:Ta-prelim} and Proposition \ref{prop:om-affine}, the half-relaxed transport derivative admits the affine decomposition $\omega_c^-(x, y) = \tilde f(x) + \tilde g(y)$, where 
	\[
	\tilde f(x) := \frac12 \omega_c^-(0,0) + x^\top (m - \tilde m), \qquad 
	\tilde g(y) := \frac12 \omega_c^-(0,0) - y^\top (m - \tilde m).
	\]
	Since $\mathbf{P}$ is an immersion coupling of $\mathbb{P}^0$ and $\tilde{\mathbb{P}}^0$, Proposition \ref{prop:immersion} together with the Markov property yields
	\begin{align*}
		\mathbf{E}\left[ \int_s^t \omega_c^-(X_u, Y_u) \, du \;\middle|\; \boldsymbol{\mathcal{F}}_s \right]
		&= \mathbf{E}\left[ \int_s^t \bigl( \tilde f(X_u) + \tilde g(Y_u) \bigr) \, du \;\middle|\; \boldsymbol{\mathcal{F}}_s \right] \\
		&= \mathbb{E}^0\left[ \int_s^t \tilde f(X_u) \, du \;\middle|\; \mathcal{F}_s \right] + \tilde{\mathbb{E}}^0\left[ \int_s^t \tilde g(Y_u) \, du \;\middle|\; \tilde{\mathcal{F}}_s \right] \\
		&= \mathbb{E}^{X_s}\left[ \int_s^t \tilde f(X_u) \, du \right] + \tilde{\mathbb{E}}^{Y_s}\left[ \int_s^t \tilde g(Y_u) \, du \right].
	\end{align*}
	
	Substituting this into \eqref{eq:conditional-sub}, we obtain
	\[
	\mathbf{E}[c(X_t, Y_t) \mid \boldsymbol{\mathcal{F}}_s] \ge c(X_s, Y_s) + \mathbb{E}^{X_s}\left[ \int_s^t \tilde f(X_u) \, du \right] + \tilde{\mathbb{E}}^{Y_s}\left[ \int_s^t \tilde g(Y_u) \, du \right].
	\]
	Since $\sigma(X_s, Y_s) \subset \boldsymbol{\mathcal{F}}_s$, taking conditional expectation with respect to $\sigma(X_s, Y_s)$ and using monotonicity gives
	\[
	\mathbf{E}[c(X_t, Y_t) \mid X_s, Y_s] \ge c(X_s, Y_s) + \mathbb{E}^{X_s}\left[ \int_s^t \tilde f(X_u) \, du \right] + \tilde{\mathbb{E}}^{Y_s}\left[ \int_s^t \tilde g(Y_u) \, du \right].
	\]
	
	For the optimal coupling $\mathbf{P}_*^{(0,0)}$, equality holds in \eqref{eq:conditional-sub} (since $S_t$ is a martingale), and consequently the inequality above becomes an equality as well. Therefore,
	\[
	\mathbf{E}_*^{(0,0)}[c(X_t, Y_t) \mid X_s, Y_s] \le \mathbf{E}[c(X_t, Y_t) \mid X_s, Y_s]
	\]
	for every immersion coupling $\mathbf{P}$. Hence $\mathbf{P}_*^{(0,0)}$ is dynamically optimal.
\end{proof}

\section{Strong Duality (Proof of Theorem \ref{thm:strongdual})}\label{sec:3}

\subsection{Strong duality and a formal proof}
In this final section, we establish the \emph{strong duality} (Theorem \ref{thm:strongdual}) between the L\'evy transport derivative $\theta^\Lambda$ (Definition \ref{def:levy-deriv}) and its dual quantity $\omega'$ (Definition \ref{def:dual-deriv}).

Throughout, we denote by $C_2^2(\mbr^d)$ the space of twice continuously differentiable functions $\vphi:\mbr^d\to\mbr$ for which there exists a constant $C\ge 0$ such that
\begin{align}\label{cond:C2growth}
	|\vphi(x)|\le C(1+|x|^2),\qquad 
	|\nabla \vphi(x)|\le C(1+|x|),\qquad 
	|D^2\vphi(x)|\le C,
\end{align}
for all $x\in\mbr^d$.
In particular, $C_2^2(\mbr^d)$ contains all quadratic functions of the form
\begin{align*}
	q(x)= a + b^\top x + x^\top Q x,
\end{align*}
where $a\in \mbr$, $b\in \mbr^d$, and $Q\in \mcl{M}(\mbr^d)$.
For $(x,y)\in \mbr^d\times \mbr^d$, we denote by $\mcl{D}(x,y)$ the family of all pairs $(\vphi,\psi)\in C_2^2(\mbr^d)^2$ such that
\begin{align*}
	c_2 - \vphi\oplus\psi \ \text{achieves a global minimum at } (x,y),
\end{align*}
where we recall $c_2(x,y)=\f 12 |x-y|^2$. 
Equivalently, $\mcl{D}(x,y)$ consists of all pairs $(\vphi,\psi)$ for which $\vphi\oplus\psi$, up to an additive constant, touches $c_2$ from below at $(x,y)$.
This family of test functions appears in the definition of dual transport derivatives between L\'evy generators $\mA,\mB\in\mcg_2^\La(\mbr^d)$:
\begin{align*}
	\om'(x,y) =\sup_{(\vphi,\psi)\in \mcl{D}(x,y)} [(\mA\vphi)(x)+(\mB\psi)(y)].
\end{align*}

We first establish a structural identity for the dual transport derivative, mirroring the affine form of the L\'evy transport derivative given in Proposition \ref{prop:ta-affine}.

\begin{proposition}\label{prop:om-affine}
	Let $\mA,\mB\in \levyG_2(\mbr^d)$, and let $\om$ be the dual transport derivative associated with the pair $(\mA,\mB)$. Then $\om$ is affine and satisfies
	\begin{align*}
		\om(x,y)=\om(0,0)+ (m_\mA-m_\mB)^\top (x-y),\quad\mbox{ for all }(x,y)\in\mbr^d\times\mbr^d.
	\end{align*}
\end{proposition}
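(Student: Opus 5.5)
The plan is to prove the identity directly from the definition of $\omega'$, without using strong duality. I will transport admissible test pairs from $(0,0)$ to a general point $(x,y)$ and track how $\mA\varphi$ and $\mB\psi$ change. The tool is the cocycle identity
\[
c_2(u+x,v+y)=c_2(u,v)+(x-y)^\top(u-v)+c_2(x,y).
\]

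\emph{Step 1: transporting test pairs.} Fix $(x,y)$ and let $(\varphi,\psi)\in\mathcal D(0,0)$. Define
\[
\hat\varphi(u):=\varphi(u-x)+(x-y)^\top(u-x),\qquad \hat\psi(v):=\psi(v-y)-(x-y)^\top(v-y).
\]
By the cocycle identity,
\[
c_2(u,v)-\hat\varphi(u)-\hat\psi(v)=\bigl[c_2-\varphi\oplus\psi\bigr](u-x,v-y)+c_2(x,y).
\]
Hence $c_2-\hat\varphi\oplus\hat\psi$ attains its global minimum at $(x,y)$. The growth bounds \eqref{cond:C2growth} are preserved under translation and adding affine functions, so $(\hat\varphi,\hat\psi)\in\mathcal D(x,y)$. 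The map is a bijection, since the inverse map has the same form. So $\mathcal D(x,y)$ is exactly the image of $\mathcal D(0,0)$.

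\emph{Step 2: evaluating the generators.} A Lévy generator $\mA$ commutes with translations, so $\mA[\varphi(\cdot-x)](x)=(\mA\varphi)(0)$. For a linear function $\ell_a(u)=a^\top u$, the untruncated representation \eqref{eq:LK-rep-op} gives $\mA\ell_a\equiv a^\top\tilde\kappa$. The jump integrand vanishes identically and the Hessian is zero. Here $\tilde\kappa=m_\mA$, because the untruncated drift is the mean of $\mu_1$ (the jump and Gaussian parts are centred). One should check that $\mA$ acts linearly on $C_2^2$ functions, which holds because the finite second moment of $\Theta$ makes the integral absolutely convergent. This gives
\[
(\mA\hat\varphi)(x)+(\mB\hat\psi)(y)=(\mA\varphi)(0)+(\mB\psi)(0)+(x-y)^\top(m_\mA-m_\mB).
\]
The constants coming from $-(x-y)^\top x$ and $+(x-y)^\top y$ are annihilated, since $\mA 1=0$.

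\emph{Step 3: conclusion.} Take the supremum over $\mathcal D(0,0)$ and use the bijection of Step 1. This yields $\omega'(x,y)=\omega'(0,0)+(m_\mA-m_\mB)^\top(x-y)$, including the case where the value is $\pm\infty$. The routine parts are the algebra and the translation invariance. The only point needing care is justifying the identity $\mA\ell_a=a^\top m_\mA$ for unbounded test functions, which rests on extending \eqref{eq:LK-rep-op} to $C_2^2$ and identifying $\tilde\kappa$ with the mean. I would check this via $\mathbb E^0[X_1]=\tilde\kappa$ from the Lévy–Itô decomposition in Section \ref{sec:2.1.3}.
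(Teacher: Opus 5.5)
Your proof is correct and is essentially the paper's argument. The paper maps $\mathcal{D}(x_0,y_0)$ into $\mathcal{D}(0,0)$ via the first-order Taylor remainder $\tilde\varphi(u)=\varphi(u+x_0)-\varphi(x_0)-\nabla\varphi(x_0)^\top u$, uses the touching condition $\nabla\varphi(x_0)=x_0-y_0=-\nabla\psi(y_0)$, and then relies on the same two facts you use (translation invariance and $\mathcal{A}\ell_a=a^\top m_{\mathcal{A}}$); your map $\varphi\mapsto\hat\varphi$ is exactly its inverse up to an additive constant. Your explicit bijection is slightly cleaner: the paper's Taylor-remainder map discards affine parts, so its stated ``if and only if'' holds only in the direction $\mathcal{D}(x_0,y_0)\to\mathcal{D}(0,0)$, and the reverse inclusion needed for the supremum is precisely what your $\hat\varphi,\hat\psi$ supply.
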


\begin{proof}
    Given $\varphi\in C_2^2(\mathbb{R}^d)$ and $x_0\in\mathbb{R}^d$, denote $\tilde\varphi = \tilde\varphi_{x_0}\in C_2^2(\mbr^d)$ the first-order Taylor remainder:
	\[
	\tilde\varphi(x) := \varphi(x+x_0) - \varphi(x_0) - \nabla\varphi(x_0)^\top x,
	\]
	Note that for any $(x_0,y_0)$, 
	$(\varphi,\psi) \in D(x_0,y_0)$ if and only if $(\tilde\varphi,\tilde\psi)=(\tilde\varphi_{x_0},\tilde\psi_{y_0}) \in D(0,0)$, 
	which follows directly from the properties of the quadratic cost $c_2(x,y)=\frac12 |x-y|^2$.

    Let $x_0,y_0,\varphi,\psi,\tilde\varphi,\tilde\psi$ be as above. Then
	\[
	\mA\varphi(x_0) + \mB\psi(y_0) = \mA\tilde\varphi(0) + \mB\tilde\psi(0) + \nabla\varphi(x_0)^\top m_\mA + \nabla\psi(y_0)^\top m_\mB.
	\]
	Since $(\varphi,\psi)\in D(x_0,y_0)$, the touch condition gives $\nabla\varphi(x_0) = x_0 - y_0 = -\nabla\psi(y_0)$. Hence
	\[
	\mA\varphi(x_0) + \mB\psi(y_0) = \mA\tilde\varphi(0) + \mB\tilde\psi(0) + (m_\mA - m_\mB)^\top (x_0 - y_0).
	\]
    The desired identity follows directly from this equality by taking the supremum over $(\varphi,\psi)\in D(x_0,y_0)$.
\end{proof}

Before turning to the rigorous proof of Theorem \ref{thm:strongdual}, we first present a guiding heuristic, commonly used in optimization theory, for deriving duality results.
Comparing the affine structures for $\theta^\Lambda$ and $\omega'$ established in Propositions \ref{prop:ta-affine} and \ref{prop:om-affine}, it suffices to establish the identity at the single point $(x,y) = (0,0)$.
Consider the functional
\(
F:\levyG_2(\mbr^{2d}) \times [C_2^2(\mbr^d)]^2 \to \mbr
\)
defined by
\begin{align}\label{def:F.3}
	F(\mJ,\vphi,\psi)
	:= (\mJ c_2)(0,0)-\mJ(\vphi\oplus \psi)(0,0)
	+ \mA\vphi(0)+\mB\psi(0).
\end{align}
For fixed $(\vphi,\psi)$, the map $\mJ\mapsto F(\mJ,\vphi,\psi)$ is linear, and for fixed $\mJ$, the map $(\vphi,\psi)\mapsto F(\mJ,\vphi,\psi)$ is linear as well. Formally applying a minimax argument, one obtains
\begin{align}
	\ta(0,0)&=\inf_{\mJ\in\Ga^\La(\mA,\mB)} (\mJ c_2)(0,0) \nonumber
	\\
	&\stackrel{(1)}{=} \inf_{\mJ \in \levyG_2(\mbr^{2d})}
	\sup_{(\vphi,\psi)\in Y} F(\mJ,\vphi,\psi) \nonumber\\
	&\stackrel{(2)}{=} \sup_{(\vphi,\psi)\in Y}
	\inf_{\mJ \in \levyG_2(\mbr^{2d})} F(\mJ,\vphi,\psi) \nonumber\\
	&\stackrel{(3)}{=} \sup_{(\vphi,\psi)\in \mcl{D}(0,0)}
	\bigl[\mA\vphi(0)+\mB\psi(0)\bigr]=\om(0,0), \label{eq:heuristic}
\end{align}
where $Y\subset [C_2(\mbr^d)]^2$ is a certain convex subset. 

This heuristic argument closely parallels the classical derivation of Kantorovich duality in optimal transport \cite{MR3050280}. In that setting, the functional
\(
F:\mcl{M}(\Pi^2)\times [C_b(\Pi)]^2\to\mbr
\)
is defined by
\begin{align*}
	F(\ga,\vphi,\psi)
	:= \int_{\Pi^2} (c-\vphi\oplus \psi)\,d\ga
	+ \int_\Pi \vphi\,d\mu + \int_\Pi \psi\,d\nu,
\end{align*}
and the interchange of the infimum and supremum is rigorously justified by the Fenchel--Rockafellar duality theorem.

Our proof proceeds by making the heuristic argument above fully rigorous. To this end, we shall establish the following points:
\begin{enumerate}[label=(\roman*)]
	\item a generalized minimax principle applicable in the present setting;
	\item for each fixed $\mJ\in \mcg_2^\La(\mbr^{2d})$,
	\begin{align}
		\sup_{(\vphi,\psi)\in Y}
		\sqb{\mA\vphi(0)+\mB\psi(0)-\mJ(\vphi\oplus\psi)(0,0)}
		= \begin{cases}
			0, & \mJ\in \Ga^\La(\mA,\mB),\\
			+\infty, & \mJ\notin \Ga^\La(\mA,\mB);
		\end{cases}\label{ind:sup}
	\end{align}
	\item for each fixed $(\vphi,\psi)\in Y$,
	\begin{align}
		\inf_{\mJ\in \mcg_2^\La(\mbr^{2d})}
		[\mJ(c_2-\vphi\oplus\psi)](0,0)
		= \begin{cases}
			0, & (\vphi,\psi)\in \mcl{D}(0,0),\\
			-\infty, & (\vphi,\psi)\notin \mcl{D}(0,0).
		\end{cases}\label{ind:inf}
	\end{align}
\end{enumerate}
In the following subsections, we address each of these points in turn.

\subsection{A generalized minimax principle}

In this subsection, we formulate an abstract minimax principle adapted to the present setting. We begin by recalling the classical \emph{Sion minimax theorem}.
Let $X$ be a compact convex subset of a topological vector space, and let $Y$ be a convex subset of a topological vector space. Let $F:X\times Y\to\mbr$ be a function such that
\begin{itemize}
	\item for each fixed $y\in Y$, the map $x\mapsto F(x,y)$ is upper semicontinuous and quasi-concave on $X$;
	\item for each fixed $x\in X$, the map $y\mapsto F(x,y)$ is lower semicontinuous and quasi-convex on $Y$.
\end{itemize}
Then
\[
\min_{x\in X}\sup_{y\in Y} F(x,y)
=
\sup_{y\in Y}\min_{x\in X} F(x,y).
\]

Unfortunately, Sion’s minimax theorem is not applicable in our framework for two fundamental reasons. The first is the lack of compactness in any natural topology on the space
\(
X=\levyG_2(\mbr^{2d}),
\)
which precludes the direct use of compactness-based minimax arguments.
The second obstruction is more structural. Although $\levyG_2(\mbr^{2d})$ is a convex set in the algebraic sense—indeed, it can be naturally viewed as a subset of a vector space of linear operators acting on suitable test functions—it is not equipped with a topology inherited from this ambient vector space. Rather, the topology relevant to our analysis is defined intrinsically on $\levyG_2(\mbr^{2d})$ and is not induced by any locally convex topology on the surrounding operator space. As a consequence, the usual continuity and convexity assumptions required by Sion’s theorem cannot be verified within an ambient topological vector space.

For this reason, we require a slightly more general version of the minimax principle. We begin by specifying the underlying setting.
Let $\mX$ be a vector space. A \emph{convex set} $X\subset \mX$ is a subset that is closed under convex combinations, that is,
\begin{align*}
	\lambda x + (1-\lambda)x' \in X
	\quad \text{for all } x,x'\in X \text{ and } \lambda\in[0,1].
\end{align*}
A function $f:X\to\mbr$ is called \emph{quasi-convex} if, for all $x,y\in X$ and all $\lambda\in[0,1]$,
\[
f(\lambda x + (1-\lambda)y) \le \max\{f(x),f(y)\}.
\]
Similarly, $f$ is called \emph{quasi-concave} if, for all $x,y\in X$ and all $\lambda\in[0,1]$,
\[
f(\lambda x + (1-\lambda)y) \ge \min\{f(x),f(y)\}.
\]

We now introduce the topological framework. Let $X\subset \mX$ be a convex set equipped with a \emph{convex topology} $\tau$. 
Here, by convex topology it means a topology such that the convex combination
\begin{align*}
	[0,1]\times X\times X\ni(\la,x,x')\mapsto \la x + (1-\la)x
\end{align*}
is continuous.
In contrast to the classical setting, we do not assume that the ambient vector space $\mX$ itself carries a compatible topological structure. One special case is when $\tau$ is the subspace topology inherited from a topology on $\mX$, provided such a topology exists.
Let $f:X\to\mbr$ be a function. We say that $f$ has the following properties:
\begin{itemize}
	\item $f$ is \emph{lower semicontinuous} (l.s.c.) if, for every $r\in\mbr$, the sublevel set
	\[
	\{x\in X: f(x)\le r\}
	\]
	is closed in the topology $\tau$.
	
	\item $f$ is \emph{upper semicontinuous} (u.s.c.) if, for every $r\in\mbr$, the superlevel set
	\[
	\{x\in X: f(x)\ge r\}
	\]
	is closed in the topology $\tau$.
	
	\item $f$ is \emph{inf-compact} on $X$ if, for every $r\in\mbr$, the sublevel set
	\[
	\{x\in X: f(x)\le r\}
	\]
	is compact in the topology $\tau$.
\end{itemize}

Now we state the minimax principle.

\begin{theorem}[Minimax principle]\label{thm:minimax}
	Let $\mcl{X}$ and $\mcl{Y}$ be vector spaces, and let $X\subset \mcl{X}$ and $Y\subset \mcl{Y}$ be convex subsets, each equipped with a convex topology. 
	Let $F: X \times Y \to \mathbb{R} \cup \{+\infty\}$ be a function. 
	Assume that the following conditions hold:
	\begin{itemize}
		\item for every $y \in K$, the function $x \mapsto F(x,y)$ is quasi-convex and lower semicontinuous (l.s.c.) on $X$;
		\item for every $x \in X$, the function $y \mapsto F(x,y)$ is quasi-concave and upper semicontinuous (u.s.c.) on $Y$;
		\item there is a finite set $K_0\subset Y$ such that the map $x\mapsto \max_{y\in K_0} F(x,y)$ is inf-compact, that is, for every $r\in\mbr$, the following set is compact in $\tau$: 
		\begin{align*}
			\cb{x\in X: \max_{y\in K_0}F(x,y)\le r}= \bigcap_{y\in K_0} \cb{x\in X:F(x,y)\le r}. 
		\end{align*}
	\end{itemize}
	Then the following minimax equality holds:
	\[
	\inf_{x \in X} \sup_{y \in Y} F(x,y)
	=
	\sup_{y \in Y} \inf_{x \in X} F(x,y).
	\]
\end{theorem}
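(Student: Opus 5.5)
The inequality $\sup_{y}\inf_{x} F \le \inf_{x}\sup_{y} F$ is trivial, so the plan is to prove the reverse. Set $\alpha := \sup_{y\in Y}\inf_{x\in X}F(x,y)$. If $\alpha=+\infty$ there is nothing to show, so assume $\alpha<\infty$ and fix $r>\alpha$. For each $y\in Y$ put
\[
L_y := \cb{x\in X : F(x,y)\le r}.
\]
By lower semicontinuity in $x$, each $L_y$ is closed in $\tau$. The target is $\bigcap_{y\in Y}L_y\neq\emptyset$: a point $x_0$ in this intersection satisfies $\sup_y F(x_0,y)\le r$, and letting $r\searrow\alpha$ gives the reverse inequality. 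By the inf-compactness hypothesis, $C:=\bigcap_{y\in K_0}L_y$ is compact. Hence it suffices to prove the \emph{finite intersection property}: for every finite set $K\subset Y$, the set $\bigcap_{y\in K\cup K_0}L_y$ is nonempty. Then $\{L_y\cap C\}_{y\in Y}$ is a family of closed subsets of the compact set $C$ with the finite intersection property, so its total intersection is nonempty.

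The core step is therefore a finite-dimensional statement: for every finite $K\subset Y$,
\[
\inf_{x\in X}\max_{y\in K}F(x,y)\le\alpha,
\]
and moreover the infimum is attained, or at least $\bigcap_{y\in K\cup K_0}L_y\neq\emptyset$. I plan to reduce to the two-point case and then induct on $|K|$, as in Komiya's elementary proof of Sion's theorem.

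\textbf{Two-point lemma.} For $y_1,y_2\in Y$, if $r$ exceeds $\inf_x F(x,y)$ for every $y$ on the segment $[y_1,y_2]$, then $L_{y_1}\cap L_{y_2}\cap C'\neq\emptyset$, where $C'$ is a compact set obtained by adjoining $K_0$. I argue by contradiction. Suppose $L_{y_1}\cap L_{y_2}\cap C'=\emptyset$. By quasi-concavity in $y$, for $z_\lambda=\lambda y_1+(1-\lambda)y_2$ we have $F(x,z_\lambda)\ge\min\{F(x,y_1),F(x,y_2)\}$, so $L_{z_\lambda}\cap C'\subset L_{y_1}\cup L_{y_2}$. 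By quasi-convexity in $x$, $L_{z_\lambda}\cap C'$ is convex. The intent is that it is therefore "connected" and contained in the disjoint union of the two closed sets $L_{y_1}\cap C'$ and $L_{y_2}\cap C'$, so it lies entirely in one of them. Since only a convex topology is available, I intend to replace connectedness by continuity of $\lambda\mapsto$ convex combinations: the image of $[0,1]$ under $\lambda\mapsto\lambda x+(1-\lambda)x'$ is connected, because the map is continuous by assumption. Applying this to a segment joining a point of the first set to a point of the second yields the needed connectedness. Next, I split $[0,1]$ into $I_1=\{\lambda: L_{z_\lambda}\cap C'\subset L_{y_1}\}$ and $I_2$ analogously. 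Upper semicontinuity of $F$ in $y$, together with continuity of $\lambda\mapsto z_\lambda$, should show that $I_1$ and $I_2$ are both closed, or both open after working with strict level sets $\{F<r'\}$ for $\alpha<r'<r$ as Komiya does. Since $0\in I_2$ and $1\in I_1$, this contradicts the connectedness of $[0,1]$.

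\textbf{Induction.} Assume the claim for $n$ points. Given $y_1,\dots,y_{n+1}$, restrict $X$ to the convex set $X'=L_{y_{n+1}}\cap C$, which is quasi-convex-closed and compact, and apply the $n$-point statement on $X'$. If it failed, the two-point lemma applied on $X'$ with a suitable pair would give a contradiction. The main obstacle I expect is the two-point lemma, specifically the open/closed argument for $I_1,I_2$ without an ambient topological vector space, and making the compactness from $K_0$ available at every stage. My first attempt is to always intersect with $C$ from the start, so that all sets are compact, and to use the strict-sublevel trick so that u.s.c. in $y$ gives openness.
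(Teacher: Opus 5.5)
Your outer architecture matches the paper's: closed sublevel sets in $x$, compactness of $C=\bigcap_{y\in K_0}L_y$ to reduce to finitely many $y$'s, and a Komiya-style finite lemma (a two-point connectedness argument, then induction). The gap is your plan to bring the compact set $C$ (or $C'$) into the finite step.

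\textbf{The two-point lemma fails as you formulate it.} Its hypothesis only controls $\inf_{x\in X}F(x,z)$ for $z\in[y_1,y_2]$. That says nothing about whether $L_{z_\lambda}\cap C'$ is nonempty, and nonemptiness is exactly what makes $I_1,I_2$ disjoint: an empty level set puts $\lambda$ in both. For a counterexample, take $X=\mathbb{R}$, let $Y$ be the triangle with vertices $y_0,y_1,y_2$, and set $F(x,\lambda_0y_0+\lambda_1y_1+\lambda_2y_2)=\lambda_0x^2+(\lambda_1+\lambda_2)(x-10)^2$, $K_0=\{y_0\}$, $r=1$. On $[y_1,y_2]$ the infimum is $0<r$, yet $\{x^2\le 1\}\cap\{(x-10)^2\le 1\}=\emptyset$. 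In your actual setting the assumption $r>\sup_{y\in Y}\inf_X F$ rescues this (here that supremum is at least $25$). But it does so only through the multi-point statement for $K_0\cup\{y_1,y_2\}$ on $X$, which is what you are trying to prove. Replacing the hypothesis by $\inf_{x\in C'}F(x,z)<r$ is equally circular. Restricting $X$ raises $\sup_y\inf_x F$, and verifying $L_z\cap C\neq\emptyset$ is itself the finite statement for $K_0\cup\{z\}$. Your induction sketch also goes wrong: it applies the two-point lemma on $X'$, where no hypothesis of the form $\sup_y\inf_{X'}F\le\alpha$ is available.

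\textbf{The missing idea: the finite step needs no compactness.} Use compactness exactly once, in your FIP reduction, and then apply the finite statement on all of $X$ to $K\cup K_0$. Attainment is also unnecessary: $\inf_X\max_{y\in K\cup K_0}F\le\alpha<r$ already produces a point of $\bigcap_{y\in K\cup K_0}L_y$. Dropping compactness means infima need not be attained; the paper absorbs this with slack levels.

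\emph{Two-point case.} Assume $\inf_X F(\cdot,y)\le\alpha$ for all $y$, while $\alpha<\inf_X\max(F(\cdot,y_1),F(\cdot,y_2))$. Pick $\alpha<t<\beta'$ below that infimum. Then $\{F(\cdot,y_1)\le\beta'\}$ and $\{F(\cdot,y_2)\le\beta'\}$ are disjoint, and each $\{F(\cdot,z_\lambda)\le t\}$ is nonempty since $t>\alpha\ge\inf_X F(\cdot,z_\lambda)$. The set $I_1=\{\lambda:\{F(\cdot,z_\lambda)\le t\}\subseteq\{F(\cdot,y_1)\le\beta'\}\}$ and its counterpart $I_2$ are both \emph{closed}. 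This follows from u.s.c.\ in $y$ together with connectedness of the convex sets $\{F(\cdot,z_\mu)\le\beta'\}$; no strict sublevel sets are needed.

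\emph{Induction.} Take $\beta$ strictly between $\alpha$ and $\inf_X\max_{i\le n+1}F(\cdot,y_i)$, and restrict to $X'=\{F(\cdot,y_{n+1})\le\beta\}$. State the $n$-point claim relative to $X'$: if $\beta<\inf_{X'}\max_{i\le n}F$, then some $y'\in Y$ has $\beta<\inf_{X'}F(\cdot,y')$. This produces a \emph{new} point $y'$, and it follows that $\inf_X\max(F(\cdot,y'),F(\cdot,y_{n+1}))\ge\beta>\alpha$. Then apply the two-point case on all of $X$, not on $X'$, to the pair $(y',y_{n+1})$.
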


This formulation of the minimax principle is only a mild generalization of Sion's version and is well known in the optimization literature. In particular, the replacement of compactness of the underlying space by an inf-compactness condition on suitable level sets is a common device in variational analysis and convex optimization. However, we are not aware of a reference that provides a proof in precisely the form stated above, adapted to the present level of generality. For the sake of completeness, we therefore include a self-contained proof in Appendix \ref{sec:5}.

\subsection{Weak topology on the space $\mcg_2^\La(\mbr^{d})$}

To apply the generalized minimax principle established in the previous subsection, we let $\mX$ denote the space of linear operators mapping $C_2^2(\mbr^{2d})$ into measurable functions, and set
\(
X=\levyG_2(\mbr^{2d})\subset \mX.
\)
Clearly, $\levyG_2(\mbr^{2d})$ is a convex subset of $\mX$. A crucial step is therefore to endow the space of L\'evy generators with a suitable topological structure.

The standard topologies commonly used in analysis, such as norm topologies or strong operator topologies, are not well adapted to the present setting. In particular, L\'evy generators are typically unbounded operators, and their natural domains are not preserved under these topologies. As a consequence, such topologies do not provide a convenient framework for compactness or semicontinuity arguments.

To address this issue, we introduce a \emph{weak topology} on $\mcg_2^\La(\mbr^{2d})$, defined through the weak topology on the space $\ID_2(\mbr^{2d})$ of infinitely divisible probability measures with finite second moment. For notational simplicity, we present the construction and the main arguments in the case of L\'evy generators on $\mbr^{d}$. All results extend verbatim to $\mbr^{2d}$, which is the setting relevant for the applications considered in this work.

Recall that each L\'evy generator $\mA\in\levyG_2(\mbr^d)$ uniquely determines an infinitely divisible probability measure with finite second moment, namely the law at time $1$ of the associated L\'evy process. More precisely, we associate to $\mA$ the probability measure
\begin{align}\label{def:cano-map}
	\mu_1 := \delta_0 e^{\mA} =: \mathfrak{I}(\mA) \in \ID_2(\mbr^d),
\end{align}
where $\{e^{t\mA}\}_{t\ge 0}$ denotes the L\'evy semigroup generated by $\mA$. This defines a canonical mapping
\begin{align}\label{def:imap}
	\mathfrak{I}:\levyG_2(\mbr^d)\to \ID_2(\mbr^d)
\end{align}
Using this canonical map, we may induce a topology on $\levyG_2(\mbr^d)$ as the initial topology associated with $\mathfrak{I}$, provided that a topology is specified on $\ID_2(\mbr^d)$. One natural choice is the Wasserstein--$2$ metric topology; however, this topology does not possess sufficiently strong compactness properties for our purposes. Instead, we shall work with the weak topology on $\ID_2(\mbr^d)$.

\begin{definition}[Weak topology and weak convergence on $\levyG_2(\mbr^d)$]\label{def:weak-top}
	Let $\mathfrak{I}$
	be the canonical map defined in \eqref{def:imap}, \eqref{def:cano-map}, and equip $\ID_2(\mbr^d)$ with the weak topology, that is, the subspace topology inherited from the weak topology on $\mcp(\mbr^d)$. The \emph{weak topology on $\levyG_2(\mbr^d)$} is defined as the initial topology induced by $\mathfrak{I}$.
	
	In particular, a sequence $\{\mA_n\}_{n\ge 1}\subset \levyG_2(\mbr^d)$ is said to converge weakly to $\mA\in\levyG_2(\mbr^d)$, denoted by $\mA_n \xrightarrow[]{w} \mA$, if the associated infinitely divisible measures $\mu_n := \delta_0 e^{\mA_n}$ converge weakly to $\mu := \delta_0 e^{\mA}$.
\end{definition}

We note that the above weak topology on $\levyG_2(\mbr^d)$, together with the associated notion of convergence, is not new in substance. Although it is rarely stated explicitly at the level of generators, closely related modes of convergence have long been used in the study of L\'evy processes; see \cite{Sato1999,LiangSchillingWang2020}. 

\begin{remark}\label{rem:metrizable}
	The weak topology on $\levyG_2(\mbr^d)$ is metrizable. This follows from the fact that the weak topology on $\ID_2(\mbr^d)$ is induced by the \emph{bounded Lipschitz} metric:
	\begin{align*}
		\mcl{W}_{\mathrm{BL}}(\mu, \nu) := \sup_{\|\vphi\|_{\Lip} \le 1} \left| \int_{\mbr^d} \vphi \, d(\mu - \nu) \right|, 
		\quad \text{where } \|\vphi\|_{\Lip} = \|\vphi\|_{\infty} + [\vphi]_{\Lip}.
	\end{align*}
	Given that the map $\mathfrak{I}$ from \eqref{def:imap} is a bijection, we induce a metric on $\levyG_2(\mbr^d)$ by setting:
	\begin{align*}
		\mcl{W}(\mA, \mB) = \mcl{W}_{\mathrm{BL}}(\mathfrak{I}(\mA), \mathfrak{I}(\mB)).
	\end{align*}
	In particular, the weak topology on $\levyG_2(\mbr^d)$ is \emph{sequential}, meaning that a set $K \subseteq \levyG_2(\mbr^d)$ is closed if and only if for every sequence $\{\mA_n\}_{n \in \mathbb{N}} \subseteq K$ such that $\mA_n \xrightarrow{w} \mA$ in $\levyG_2(\mbr^d)$, the limit $\mA$ is contained in $K$. Likewise, a set $K\subset\mcg_2^\La(\mbr^d)$ is compact if and only if for every sequence $\{\mA_n\}_n\subset K$ there is a subseuqence $\{\mA_{n_k}\}_k$ and $\mA\in K$ such that $\mA_{n_k}\xrightarrow[]{w}\mA$ 
\end{remark}

\begin{proposition}
	The weak topology on $\levyG_2(\mbr^d)$ is a convex topology. That is, the map 
	\begin{align*}
		[0, 1] \times \levyG_2(\mbr^d) \times \levyG_2(\mbr^d) \ni (\lambda, \mA, \mB) \mapsto \lambda \mA + (1 - \lambda)\mB
	\end{align*}
	is jointly continuous.
\end{proposition}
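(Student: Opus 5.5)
The plan is to work entirely on the side of infinitely divisible measures via the canonical map $\mathfrak{I}$ and to use L\'evy exponents, which turn convex combinations of generators into linear combinations of exponents. By Remark \ref{rem:metrizable} the weak topology on $\levyG_2(\mbr^d)$ is metrizable, and $[0,1]\times\levyG_2\times\levyG_2$ is then metrizable as well. Joint continuity therefore reduces to sequential continuity. So I will fix sequences $\lambda_n\to\lambda$ in $[0,1]$, $\mA_n\xrightarrow{w}\mA$ and $\mB_n\xrightarrow{w}\mB$, and show that $\lambda_n\mA_n+(1-\lambda_n)\mB_n\xrightarrow{w}\lambda\mA+(1-\lambda)\mB$.

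\emph{Step 1: identify $\mathfrak{I}$ of a convex combination.} From the L\'evy--Khintchine form \eqref{eq:LK-rep-op}, the triplet of $\lambda\mA+(1-\lambda)\mB$ is the convex combination of the triplets: the drifts and diffusion matrices combine convexly, and the L\'evy measures combine as $\lambda\Ta_\mA+(1-\lambda)\Ta_\mB$, which still has a finite second moment. Hence $\lambda\mA+(1-\lambda)\mB\in\levyG_2(\mbr^d)$, and its L\'evy exponent is $\lambda\Psi_\mA+(1-\lambda)\Psi_\mB$. Writing $\{\mu_t\}$ and $\{\nu_t\}$ for the convolution semigroups of $\mA$ and $\mB$, this gives
\[
\mathfrak{I}\bigl(\lambda\mA+(1-\lambda)\mB\bigr)=\mu_\lambda*\nu_{1-\lambda},
\qquad
\widehat{\mu_\lambda*\nu_{1-\lambda}}(\xi)=\exp\bigl(\lambda\Psi_\mA(\xi)+(1-\lambda)\Psi_\mB(\xi)\bigr).
\]

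\emph{Step 2: convergence of exponents.} The hypothesis $\mA_n\xrightarrow{w}\mA$ means that $\mu^{(n)}_1\to\mu_1$ weakly. By L\'evy's continuity theorem, $\hat\mu^{(n)}_1\to\hat\mu_1$ uniformly on compacts. Since these characteristic functions never vanish, the exponents $\Psi_{\mA_n}$ and $\Psi_{\mA}$ are their distinguished logarithms, normalized by $\Psi(0)=0$. The key claim is that
\[
\Psi_{\mA_n}\to\Psi_\mA \quad\text{locally uniformly.}
\]
I expect this to be the main obstacle, because it needs a continuity property of the distinguished logarithm. I would first try to invoke the standard lemma on this (cf.\ \cite[Lemma~7.7]{Sato1999}). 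If I have to argue directly, I would proceed as follows. On a compact ball, $|\hat\mu_1|\ge\delta>0$, so for large $n$ the quotient $\hat\mu^{(n)}_1/\hat\mu_1$ is uniformly close to $1$. Its principal logarithm is then small and continuous with value $0$ at $\xi=0$, and by uniqueness of the continuous logarithm it equals $\Psi_{\mA_n}-\Psi_\mA$. The same argument gives $\Psi_{\mB_n}\to\Psi_\mB$ locally uniformly.

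\emph{Step 3: conclude.} Steps 1 and 2 together with $\lambda_n\to\lambda$ give, locally uniformly in $\xi$,
\[
\lambda_n\Psi_{\mA_n}+(1-\lambda_n)\Psi_{\mB_n}\;\longrightarrow\;\lambda\Psi_\mA+(1-\lambda)\Psi_\mB .
\]
Exponentiating, the characteristic functions of $\mathfrak{I}\bigl(\lambda_n\mA_n+(1-\lambda_n)\mB_n\bigr)$ converge pointwise to the characteristic function of $\mathfrak{I}\bigl(\lambda\mA+(1-\lambda)\mB\bigr)$, and the limit is continuous at $0$. L\'evy's continuity theorem then yields weak convergence of these measures, which is exactly weak convergence of the generators. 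The endpoint cases $\lambda\in\{0,1\}$ need no separate treatment: for example, $\mu^{(n)}_{\lambda_n}\to\delta_0$ when $\lambda_n\to0$, and this is already covered by the argument above.
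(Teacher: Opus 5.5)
Your proof is correct and follows the same route as the paper. Both arguments reduce to sequential continuity via Remark~\ref{rem:metrizable}, identify $\mathfrak{I}(\lambda\mA+(1-\lambda)\mB)$ with the convolution $\mu^{*\lambda}*\nu^{*(1-\lambda)}$ of fractional convolution powers, and pass to the limit. The paper only asserts that $\mu_n^{*\lambda_n}*\nu_n^{*(1-\lambda_n)}\to\mu^{*\lambda}*\nu^{*(1-\lambda)}$ ``by the properties of weak convergence and convolution.'' Your Step~2 supplies the missing justification: locally uniform convergence of the distinguished logarithms, as in \cite[Lemma~7.7]{Sato1999}, followed by L\'evy's continuity theorem, which also covers the dependence on the varying exponent $\lambda_n$.
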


\begin{proof}
	Since $\levyG_2(\mbr^d)$ is a sequential space, it suffices to show that for any sequences $\lambda_n \to \lambda$, $\mA_n \xrightarrow{w} \mA$, and $\mB_n \xrightarrow{w} \mB$, the convergence
	\begin{align}\label{goal:conv}
		\lambda_n \mA_n + (1 - \lambda_n)\mB_n \xrightarrow{w} \lambda \mA + (1 - \lambda)\mB
	\end{align}
	holds in $\levyG_2(\mbr^d)$. Let $\mu_n := \mathfrak{I}(\mA_n) = \delta_0 e^{\mA_n}$ and $\nu_n := \mathfrak{I}(\mB_n) = \delta_0 e^{\mB_n}$. Since $\mA_n\xrightarrow[]{w}\mA,\mB_n\xrightarrow[]{w}\mB$, we have $\mu_n \xrightarrow{w} \mu := \mathfrak{I}(\mA)$ and $\nu_n \xrightarrow{w} \nu := \mathfrak{I}(\mB)$. Noting that $\mathfrak{I}(\lambda \mA) = (\mathfrak{I}(\mA))^\lambda$ (the $\lambda$-th convolution power), we compute:
	\begin{align*}
		\mathfrak{I}(\lambda_n \mA_n + (1 - \lambda_n)\mB_n) &= \mathfrak{I}(\lambda_n \mA_n) * \mathfrak{I}((1 - \lambda_n)\mB_n) 
		= \mu_n^{*\lambda_n} * \nu_n^{*(1 - \lambda_n)}.
	\end{align*}
	As $\lambda_n \to \lambda$, $\mu_n \xrightarrow{w} \mu$, and $\nu_n \xrightarrow{w} \nu$, the properties of weak convergence and convolution ensure that $\mu_n^{*\lambda_n} * \nu_n^{*(1 - \lambda_n)} \xrightarrow{w} \mu^{*\lambda} * \nu^{*(1 - \lambda)}$, which is precisely $\mathfrak{I}(\lambda \mA + (1 - \lambda)\mB)$. This establishes \eqref{goal:conv}.
\end{proof}

Our next task is to establish the lower semicontinuity of the map
\(
\levyG_2(\mbr^{2d}) \ni \mJ \mapsto F(\mJ,\vphi,\psi)
\)
under the weak topology introduced above. From the definition \eqref{def:F.3}, this reduces to proving the lower semicontinuity, for any fixed $f \in C_2^2(\mbr^{2d})$, of the map
\[
\mJ \mapsto (\mJ f)(0).
\]

To this end, we recall a classical characterization of weak convergence for infinitely divisible measures in terms of their generating triplets (see \cite[Theorem 2.8.7]{Sato1999}). In what follows, let $b \in C_c(\mathbb{R}^d)$ be a compactly supported continuous cutoff function, and denote by $(\kappa, \alpha, \Theta)_b$ the L\'evy triplet of an ID measure with respect to the cutoff function $b$; see Section \ref{sec:3.1.2} for details.  

\begin{lemma}\label{lem:Sato}
	For $n \in \mathbb{N} \cup \{\infty\}$, let $\mu_n \in \ID(\mbr^d)$, and let $(\kappa_n,\alpha_n,\Theta_n)_b$ be the associated L\'evy triplet. Then $\mu_n \to \mu_\infty$ weakly if and only if the truncated triplets $(\kappa_n,\alpha_n,\Theta_n)_b$ converge to $(\kappa_\infty,\alpha_\infty,\Theta_\infty)_b$ in the following sense:
	\begin{itemize}
		\item For every $f \in C_c(\mbr^d)$ that vanishes on a neighborhood of the origin, 
		\begin{align}\label{eq:f-convergence}
			\lim_{n\to\infty} \int_{\mbr^d} f \, d\Theta_n = \int_{\mbr^d} f \, d\Theta_\infty.
		\end{align}
		
		\item For $\de>0$, define the nonnegative definite matrix
		\[
		x^\top \alpha_n^\de x := x^\top \alpha_n x + \int_{|y|\le \de} (x^\top y)^2 \, \Theta_n(dy), \quad x \in \mbr^d.
		\]
		Then, for all $x \in \mbr^d$,
		\begin{align}\label{conv:diff-mat}
			\lim_{\de \searrow 0} \limsup_{n\to\infty} \big| x^\top \alpha_n^\de x - x^\top \alpha_\infty^\de x \big| = 0.
		\end{align}
		
		\item $\kappa_n \to \kappa_\infty$ in $\mbr^d$.
	\end{itemize}
\end{lemma}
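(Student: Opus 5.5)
The plan is to pass through characteristic functions and reduce everything to pointwise convergence of the L\'evy exponents. Since an ID characteristic function never vanishes, L\'evy's continuity theorem shows that $\mu_n\to\mu_\infty$ weakly if and only if $\Psi_{\mu_n}\to\Psi_{\mu_\infty}$ pointwise, and then in fact locally uniformly in $\xi$. Here we use the continuous distinguished logarithm from \eqref{eq:levy-char}. Both directions then become statements about the representation \eqref{eq:LK-rep} with a fixed continuous cutoff $b$. I take $b\equiv 1$ near $0$, as in Sato.

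\textbf{Triplet convergence implies weak convergence.} Fix $\xi$ and write $g_\xi(z)=e^{i\xi^\top z}-1-i\xi^\top z\,b(z)$. Choose $\delta$ small enough that $b=1$ on $\{|z|\le\delta\}$ and $\Theta_\infty(\{|z|=\delta\})=0$. Split $\Psi_{\mu_n}(\xi)$ into three pieces:
\[
i\kappa_n^\top\xi-\tfrac12\xi^\top\alpha_n^\delta\xi
\;+\;\int_{|z|\le\delta}\bigl(g_\xi(z)+\tfrac12(\xi^\top z)^2\bigr)d\Theta_n
\;+\;\int_{|z|>\delta}g_\xi\,d\Theta_n .
\]
\begin{itemize}
\item The first piece converges, up to an error vanishing as $\delta\searrow0$, by the drift condition and \eqref{conv:diff-mat}.
\item The second piece is bounded by $C|\xi|^3\delta\int_{|z|\le\delta}|z|^2d\Theta_n$. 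This integral is bounded uniformly in $n$, because \eqref{conv:diff-mat} controls $\operatorname{tr}\alpha_n^\delta$.
\item The third piece converges by \eqref{eq:f-convergence}, after approximating $g_\xi\mathbf 1_{|z|>\delta}$ by continuous functions vanishing near $0$; the choice of $\delta$ handles the boundary $\{|z|=\delta\}$.
\end{itemize}
For the third piece I also need uniform control of $\Theta_n(|z|>R)$ as $R\to\infty$. This does not follow from test functions in $C_c$ alone. I would therefore read \eqref{eq:f-convergence} as Sato states it, for bounded continuous $f$ vanishing near the origin, which gives this tightness at infinity. Letting $n\to\infty$ and then $\delta\searrow0$ yields $\Psi_{\mu_n}\to\Psi_{\mu_\infty}$.

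\textbf{Weak convergence implies triplet convergence.} First I extract uniform bounds from the locally uniform convergence of $\Psi_{\mu_n}$. Averaging $-\operatorname{Re}\Psi_{\mu_n}$ over a small ball in $\xi$ gives a function of $z$ comparable to $1\wedge|z|^2$. This bounds $\operatorname{tr}\alpha_n+\int(1\wedge|z|^2)d\Theta_n$ uniformly in $n$, and the smallness of $\Psi_{\mu_n}$ near $\xi=0$, uniformly in $n$, gives tightness of $\Theta_n$ at infinity. Next, for any subsequence I extract a further subsequence along which $(1\wedge|z|^2)\Theta_n$ converges weakly on $\mathbb R^d\setminus\{0\}$ together with its possible mass concentrating at $0$, along which $\alpha_n^\delta$ converges along suitable $\delta$, and along which $\kappa_n$ converges; the bound on $\kappa_n$ comes from $\operatorname{Im}\Psi_{\mu_n}$. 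The computation from the first direction then identifies the limit exponent as a L\'evy--Khintchine exponent. Here the mass of $(\xi^\top z)^2\Theta_n$ concentrating at the origin reappears as an extra Gaussian part, which is exactly what the double limit in \eqref{conv:diff-mat} records. Uniqueness of the triplet for $\mu_\infty$ pins down every subsequential limit, so the whole sequence converges in the stated sense.

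The main obstacle is this converse direction: obtaining the uniform bounds and tightness of $(\alpha_n,\Theta_n)$ from weak convergence, and correctly accounting for small-jump mass migrating into the diffusion part. I would follow Sato's averaging argument for it.
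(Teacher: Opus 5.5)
Your proposal is correct and follows essentially the paper's route. The paper gives no argument of its own but cites Sato's Theorem~8.7, and what you sketch is that proof: splitting $\Psi_{\mu_n}$ at a continuity radius $\delta$ in one direction; averaging bounds, subsequence extraction and uniqueness of the triplet in the other. That includes the cutoff with $b\equiv1$ near $0$; a condition of the type $b(z)=1+o(|z|)$, as in Sato, is genuinely needed for the drift statement.

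Your reading of \eqref{eq:f-convergence} with bounded continuous $f$ vanishing near the origin is also necessary rather than cosmetic. With only $C_c$ test functions the ``if'' direction is false: the triplets $(0,0,\delta_{ne_1})_b$ satisfy all three conditions with limit $(0,0,0)$, yet $\mu_n$ is not tight. The paper itself relies on Sato's stronger form when it derives \eqref{eq:weak-conv} in Lemma~\ref{lem:weak-continuity}.
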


We first establish the continuity of the map $\levyG_2(\mbr^d)\ni\mA \mapsto (\mA f)(0)$, if $f\in C_b^2(\mbr^d)$. 
\begin{lemma}\label{lem:weak-continuity}
	Let $f \in C_b^2(\mbr^d)$, and let $\{\mA_n\}_{n\in\mbn \cup \{\infty\}} \subset \levyG_2(\mbr^d)$. 
	Suppose that $\mA_n \xrightarrow{w} \mA_\infty$. 
	Then, for every $x \in \mbr^d$,
	\(
	(\mA_n f)(x) \;\longrightarrow\; (\mA_\infty f)(x)
	\) 
	as $n\to\infty$.
\end{lemma}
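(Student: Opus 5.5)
Since the generator \eqref{eq:LK-rep-op1} acts on $f$ through a drift, a diffusion and a jump term, I would work with a fixed compactly supported continuous cutoff $b$ equal to $1$ near the origin and use Lemma \ref{lem:Sato}. Because $\mA_n\xrightarrow{w}\mA_\infty$ means $\mathfrak{I}(\mA_n)\to\mathfrak{I}(\mA_\infty)$ weakly, the lemma gives three facts: $\kappa_n\to\kappa_\infty$, the truncated covariances $\alpha_n^\delta$ converge in the double-limit sense \eqref{conv:diff-mat}, and $\Theta_n\to\Theta_\infty$ vaguely away from the origin as in \eqref{eq:f-convergence}.

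For fixed $x$ and small $\delta>0$, I would write
\[
(\mA_n f)(x)=\kappa_n^\top\nabla f(x)+\tfrac12\tr(\alpha_n^\delta \nabla^2 f(x))+\int_{|z|\le\delta}R_x(z)\,d\Theta_n(z)+\int_{|z|>\delta}g_x(z)\,d\Theta_n(z).
\]
Here $g_x(z)=f(x+z)-f(x)-z^\top\nabla f(x)b(z)$. The remainder $R_x(z)=f(x+z)-f(x)-z^\top\nabla f(x)-\frac12 z^\top\nabla^2f(x)z$ appears because the quadratic part $\frac12\int_{|z|\le\delta}z^\top\nabla^2 f(x)z\,d\Theta_n$ has been absorbed into $\alpha_n^\delta$; this is valid since $b=1$ on $\{|z|\le\delta\}$ for small $\delta$.

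The terms are handled as follows.

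The drift term converges because $\kappa_n\to\kappa_\infty$.

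For the diffusion term, $\nabla^2f(x)$ is a fixed symmetric matrix, so it is a finite linear combination of rank-one forms $vv^\top$. Applying \eqref{conv:diff-mat} to each such form (via polarization) gives $\lim_{\delta\to0}\limsup_n|\tr((\alpha_n^\delta-\alpha_\infty^\delta)\nabla^2f(x))|=0$.

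For the remainder term, $f\in C^2_b$ and $\nabla^2 f$ is uniformly continuous on the compact ball, so $|R_x(z)|\le \epsilon(\delta)|z|^2$ with $\epsilon(\delta)\to0$. Its integral is then bounded by $\epsilon(\delta)\int_{|z|\le\delta}|z|^2d\Theta_n\le \epsilon(\delta)\,\tr(\alpha_n^\delta)$. The quantity $\tr(\alpha_n^\delta)$ is bounded uniformly in $n$ for $n$ large and $\delta\le\delta_0$, again by \eqref{conv:diff-mat}.

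For the tail term on $\{|z|>\delta\}$, the integrand $g_x$ is bounded and continuous but not compactly supported, so \eqref{eq:f-convergence} does not apply directly. I expect this step to be the main obstacle. My plan is as follows.
\begin{enumerate}[label=(\alph*)]
\item Choose $\delta$ among the (all but countably many) radii with $\Theta_\infty(|z|=\delta)=0$, so that the indicator cutoff can be approximated by continuous cutoffs.
\item Establish tightness at infinity, $\lim_{R\to\infty}\sup_n\Theta_n(|z|>R)=0$, which is part of the standard proof of Sato's theorem and follows from the tightness of $\{\mathfrak{I}(\mA_n)\}$.
\item With both in hand, approximate $g_x\mathbf 1_{|z|>\delta}$ by functions in $C_c$ vanishing near $0$, uniformly up to errors controlled by $\sup_n\Theta_n(\delta'<|z|<\delta'')$ and $\sup_n\Theta_n(|z|>R)$, and deduce convergence.
\end{enumerate}
If tightness is not available verbatim from Lemma \ref{lem:Sato}, I would instead cite the version of Sato's theorem stated for all bounded continuous $f$ vanishing near the origin, which is the form Sato actually gives.

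Finally, I would first send $n\to\infty$ and then $\delta\to0$, combining the four estimates. This yields $\limsup_n|(\mA_nf)(x)-(\mA_\infty f)(x)|=0$.
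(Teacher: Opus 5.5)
Your proposal is correct and follows essentially the same route as the paper. Both arguments use Sato's triplet criterion, absorb the small-jump quadratic part into $\alpha_n^\delta$, bound the Taylor remainder by the modulus of continuity of $\nabla^2 f$, get convergence of the large-jump integral along radii $\delta$ with $\Theta_\infty(\{|z|=\delta\})=0$, and send $n\to\infty$ before $\delta\searrow 0$. Your bookkeeping is in fact slightly cleaner than the paper's: you compare against $\alpha_\infty^\delta$, exactly as in \eqref{conv:diff-mat}, and you explicitly justify the tail convergence for bounded, non-compactly supported integrands (via tightness at infinity, or via Sato's form of the criterion for bounded continuous functions vanishing near the origin), a step the paper asserts without comment.
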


\begin{proof}
	By translation invariance of Lévy generators, it suffices to prove the convergence at a single point, say $x=0$, i.e.,
	\[
	\mA_n f(0) \to \mA_\infty f(0).
	\]
	Since $\mA_n \to \mA_\infty$ weakly, by definition, the associated infinitely divisible measures converge weakly: $\mu_n \to \mu_\infty$. Invoking Lemma~\ref{lem:Sato}, the associated truncated Lévy triplets $(\kappa_n, \alpha_n, \Theta_n)_b$ converge to $(\kappa_\infty, \alpha_\infty, \Theta_\infty)_b$ (for a fixed cutoff function $b\in C_c(\mbr^d)$) in the sense specified there. In particular, weak convergence implies the uniform second-moment bound
	\begin{align}\label{def:M}
		M := \sup_{n\in\mbn\cup\{\infty\}} \int_{\mbr^d} \min\{1, |x|^2\} \, d\Theta_n(x) < \infty,
	\end{align}
	see \cite[Proof of Theorem 8.7]{Sato1999}.
	
	Define the set of positive reals:
	\[
	E := \{\delta>0 : \Theta_\infty(\{x \in \mbr^d : |x| = \delta\}) = 0\},
	\]
	i.e., the set of radii for which the limiting Lévy measure gives no mass to the corresponding sphere. Clearly, $0$ is a limit point of $E$. For any $\delta \in E$, the truncated measures $\Theta_n \mathbf{1}_{B_\delta(0)^c}$ converge weakly to $\Theta_\infty \mathbf{1}_{B_\delta(0)^c}$. By the Portmanteau lemma, for any bounded continuous $g \in C_b(\mbr^d)$,
	\begin{align}\label{eq:weak-conv}
		\lim_{n\to\infty} \int_{|x|>\delta} g(x) \, d\Theta_n(x) = \int_{|x|>\delta} g(x) \, d\Theta_\infty(x).
	\end{align}
	
	Next, fix $f \in C_b^2(\mbr^d)$ and define the modulus of continuity of $D^2 f$:
	\[
	\rho(\delta) := \sup_{|x|\le \delta} \|D^2 f(x) - D^2 f(0)\|.
	\]
	Since $f \in C_b^2(\mbr^d)$, we have $\rho(\delta) \to 0$ as $\delta \searrow 0$. By Taylor's theorem, for $|x| \le \delta$,
	\begin{align}\label{eq:taylor}
		|R_2 f(x)| := \bigl| f(x) - f(0) - x^\top \nabla f(0) - \tfrac{1}{2} x^\top D^2 f(0) x \bigr| \le \rho(\delta) |x|^2.
	\end{align}
	
	Now consider $\mA_n f(0)$ under the Lévy–Khintchine decomposition with respect to $(\kappa_n, \alpha_n, \Theta_n)_b$ with a fixed cutoff parameter $\de\in(0,1]$:
	\begin{align*}
		\mA_n f(0)&= \ka_n^\top \nabla f(0)+ \rb{\f 12\tr [\al_n D^2f(0)]+\int_{|x|\le\de} \Psi(x) d\Ta_n(x)}+\int_{|x|>\de}\Psi(x) d\Ta_n(x)\\
		&=: T_{n}^\nabla+ T_{n,\de}^\De+ T_{n,\de}^J,
	\end{align*}
	where $\Psi(x):=f(x)-f(0)-x^\top \nabla f(0)b(x)$.
	Similarly, define the decomposition for the limit:
	\[
	\mA_\infty f(0)= \ka_\infty^\top \nabla f(0)+ \f 12 \tr[\al_\infty D^2 f(0)]+ \int_{\mbr^d}\Psi(x)d\Ta_\infty(x)=: T_\infty^\nabla+ T_\infty^\De + T_\infty^J. 
	\]
	Then their difference is bounded by:
	\begin{align*}
		|\mA_n f(0) - \mA_\infty f(0)| \le |T_n^\nabla - T_\infty^\nabla| + |T_{n,\delta}^\Delta - T_\infty^\Delta| + |T_{n,\delta}^J - T_\infty^J|.
	\end{align*}
	
	Let us next treat separately the limit of the difference for each component as $n\to\infty$.  
	For the drift component, since $\kappa_n \to \kappa_\infty$, we immediately have
	\begin{align}\label{lim:drift}
		|T_n^\nabla - T_\infty^\nabla| \to 0 \quad \text{as } n \to \infty.
	\end{align}
	For the diffusion term, using the Taylor expansion $\Psi(x) = \frac12 x^\top D^2 f(0) x + R_2(x)$, we obtain
	\begin{align*}
		|T_{n,\delta}^\Delta - T_\infty^\Delta| 
		&= \Bigl| \frac12 \operatorname{tr}[\alpha_n D^2 f(0)] + \int_{|x|\le \delta} \Psi(x) \, d\Theta_n(x) - \frac12 \operatorname{tr}[\alpha_\infty D^2 f(0)] \Bigr| \\
		&\le \Bigl| \frac12 \operatorname{tr}[\alpha_n D^2 f(0)] + \frac12 \int_{|x|\le \delta} (x^\top D^2 f(0) x) b(x) \, d\Theta_n(x) - \frac12 \operatorname{tr}[\alpha_\infty D^2 f(0)] \Bigr| \\
		&\quad + \frac12 \Bigl| \int_{|x|\le \delta} R_2(x) b(x) \, d\Theta_n(x) \Bigr| \\
		&= \Bigl| \frac12 \operatorname{tr}[(\alpha_n^\delta - \alpha_\infty) D^2 f(0)] \Bigr| + \frac12 \Bigl| \int_{|x|\le \delta} R_2(x) b(x) \, d\Theta_n(x) \Bigr|,
	\end{align*}
	where $\alpha_n^\delta$ is defined as in Lemma~\ref{lem:Sato}.  \TS{check again!}
	By \eqref{eq:taylor}, $\de\le 1$, and the uniform bound \eqref{def:M}, the remainder term is controlled by
	\[
	\frac12 \Bigl| \int_{|x|\le \delta} R_2(x) b(x) \, d\Theta_n(x) \Bigr| \le \frac{\rho(\delta)}{2} \sup_n \int_{|x|\le \delta} |x|^2 \, d\Theta_n(x) =: \frac12 M \rho(\delta).
	\]
	Hence we obtain
	\begin{align}\label{item:diffusion-conv}
		\limsup_{n\to\infty} |T_{n,\delta}^\Delta - T_\infty^\Delta| \le \frac12 M \rho(\delta) + \limsup_{n\to\infty} \Bigl| \frac12 \operatorname{tr}[(\alpha_n^\delta - \alpha_\infty) D^2 f(0)] \Bigr|.
	\end{align}
	Finally, we consider the jump part. 
	Since $\Psi\in C_b(\mbr^d)$ (as the cutoff function $b$ is compactly supported), by \eqref{eq:weak-conv}, for any $\delta \in E$,
	\begin{align}
		\limsup_{n\to\infty} |T_{n,\delta}^J - T_\infty^J| 
		&\le \Bigl| \int_{|x|\le \delta} \Psi(x) \, d\Theta_\infty(x) \Bigr| + \limsup_{n\to\infty} \Bigl| \int_{|x|> \delta} \Psi(x) \, d(\Theta_n - \Theta_\infty)(x) \Bigr| \nonumber\\
		&= \Bigl| \int_{|x|\le \delta} \Psi(x) \, d\Theta_\infty(x) \Bigr| \le \frac12 M \rho(\delta). \label{item:jump}
	\end{align}
	
	Combining the estimates from \eqref{lim:drift}, \eqref{item:diffusion-conv}, and \eqref{item:jump}, we obtain that for all $\delta \in E$,
	\begin{align*}
		\limsup_{n\to\infty} |\mA_n f(0) - \mA f(0)| \le M \rho(\delta) + \limsup_{n\to\infty} \Bigl| \frac12 \operatorname{tr} \bigl[ (\alpha_n^\delta - \alpha_\infty) D^2 f(0) \bigr] \Bigr|.
	\end{align*}
	Since $0$ is a limit point of $E$, letting $\delta \searrow 0$ along a sequence in $E$, the convergence in \eqref{conv:diff-mat} together with $\rho(\delta) \to 0$ implies that
	\(
	\mA_n f(0) \to \mA_\infty f(0)
	\)
	as $n\to\infty$,
	thus establishing the desired pointwise convergence.
\end{proof}

We now extend the continuity of $\mA\mapsto (\mA f)(0)$ from $f\in C_b^2(\mbr^d)$ to $C_2^2(\mbr^d)$, except the notion of continuity is now weaken to lower semicontinuity.

\begin{proposition}\label{prop:liminf-gen}
	Let $f\in C_2^2(\mbr^d)$ satisfy $\inf_{x\in\mbr^d} f(x)>-\infty$. 
	Suppose $\{\mA_n\}_{n\in\mbn\cup\{\infty\}}\subset \levyG_2(\mbr^d)$ is a sequence of L\'evy generators such that $\mA_n\xrightarrow[]{w}\mA_\infty$. 
	Then
	\[
	\liminf_{n\to\infty} \mA_n f(0)\ge \mA_\infty f(0).
	\]
\end{proposition}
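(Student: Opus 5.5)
We reduce to Lemma~\ref{lem:weak-continuity} by truncation, using the lower bound on $f$ to control the large-jump tail by Fatou-type arguments. Adding a constant does not change $\mA f(0)$, so we may assume $f\ge 0$. Fix a smooth cutoff $\chi_R\in C_c^\infty(\mbr^d)$ with $0\le\chi_R\le 1$, $\chi_R\equiv 1$ on $B_R(0)$, and bounded derivatives uniformly in $R\ge 1$. We could try $f_R:=\chi_R f$, but this product need not lie below $f$ in a way compatible with $\mA$. Instead we write $\mA f(0)$ in the untruncated-in-space but truncated-in-jump form with a compactly supported cutoff $b$ (equal to $1$ near the origin), which is legitimate since $\mA\in\levyG_2$:
\[
\mA f(0)=\kappa^\top\nabla f(0)+\tfrac12\tr[\alpha D^2f(0)]+\int\bigl(f(z)-f(0)-z^\top\nabla f(0)b(z)\bigr)\,d\Theta(z).
\]

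Pick $g\in C_b^2(\mbr^d)$ with $g=f$ on $B_R(0)$ for some $R>1$ with $\operatorname{supp} b\subset B_R$, and with $g\le f$ everywhere. Such a $g$ exists, for example $g:=\min$-type smoothing of $f$ with a constant, or more simply $g=\chi_R f$ together with $f\ge0$, which gives $g\le f$. Then $f-g\ge 0$ vanishes on $B_R$, and in particular near the origin, and the drift and diffusion terms together with the compensator involving $b$ cancel in the difference. This yields
\[
\mA_n f(0)=\mA_n g(0)+\int_{\mbr^d}(f-g)(z)\,d\Theta_n(z).
\]
By Lemma~\ref{lem:weak-continuity}, $\mA_n g(0)\to\mA_\infty g(0)$. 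For the nonnegative remainder, the function $h:=f-g$ is continuous, nonnegative, and vanishes on a neighborhood of $0$. By \eqref{eq:f-convergence} applied to $h\phi_k$, where $\phi_k\in C_c$ with $0\le\phi_k\nearrow 1$, we obtain
\[
\liminf_n\int h\,d\Theta_n\ \ge\ \lim_n\int h\phi_k\,d\Theta_n=\int h\phi_k\,d\Theta_\infty .
\]
Letting $k\to\infty$ and using monotone convergence gives $\liminf_n\int h\,d\Theta_n\ge\int h\,d\Theta_\infty$. This integral is finite since $h$ has quadratic growth and $\Theta_\infty$ has a finite second moment. Summing the two parts gives $\liminf_n\mA_nf(0)\ge\mA_\infty g(0)+\int h\,d\Theta_\infty=\mA_\infty f(0)$.

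The main obstacle is the justification of the identity $\mA_n f(0)=\mA_n g(0)+\int h\,d\Theta_n$ for $f\in C_2^2$, which is not in $C_b^2$. This requires that the generator formula \eqref{eq:LK-rep-op1} with the cutoff $b$ is valid for quadratic-growth test functions when $\Theta_n$ has a finite second moment, so that all integrals converge absolutely. The operator is linear in the test function and the local terms depend only on the $2$-jet at $0$, where $f$ and $g$ agree. A minor point is ensuring that $g=\chi_R f\in C_b^2$, which holds since $\chi_R$ is compactly supported and $f\in C^2$.
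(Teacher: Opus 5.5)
Your proof is correct and follows essentially the paper's route. Like the paper, you split $f$ into a compactly supported $C^2$ piece that agrees with $f$ near the origin, handled by Lemma~\ref{lem:weak-continuity}, plus a remainder vanishing near the origin, on which $\mA_n$ reduces to a pure large-jump integral and lower semicontinuity applies. Your normalization $f\ge 0$ with $g=\chi_R f$ makes the remainder nonnegative, so \eqref{eq:f-convergence} plus monotone convergence is enough. This avoids the paper's appeal to the Portmanteau lemma for $\Theta_n\mathbf{1}_{B_1(0)^c}$, which tacitly needs $\Theta_\infty(\partial B_1(0))=0$, so it is a minor but slightly cleaner variant.
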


\begin{proof}
	We decompose $f=f_0+f_1$, where $f_0\in C_c^2(\mbr^d)$ is a compactly supported $C^2$-function such that $f_0=f$ on $B_1(0)$, and $f_1\in C_2^2(\mbr^d)$ vanishes on $B_1(0)$. In particular, $f_1(0)=0$ and $\nabla f_1(0)=0$.
	
	By Lemma~\ref{lem:Sato} and the continuity result established previously for compactly supported test functions, we have
	\(
	\lim_{n\to\infty} \mA_n f_0(0)=\mA_\infty f_0(0).
	\)
	Consequently,
	\[
	\liminf_{n\to\infty} \mA_n f(0)
	\ge \mA_\infty f_0(0)+\liminf_{n\to\infty} \mA_n f_1(0).
	\]
	
	We now analyze the term involving $f_1$. Since $f_1$ vanishes in a neighborhood of the origin, the drift and diffusion parts of the generators $\mA_n$ and $\mA_\infty$ do not contribute, and we may write for $n\in\mbn\cup\{\infty\}$:
	\[
	(\mA_n f_1)(0)=\int_{\mbr^d} [f(x)-f(0)-x^\top\nabla f(0)]d\Theta_n(x)=\int_{B_1(0)^c} f_1(x)\,d\Theta_n(x)
	\]
	By the weak convergence $\mA_n\xrightarrow[]{w}\mA_\infty$, the cutoff L\'evy measures $\Theta_n \mathbf{1}_{B_1(0)^c}$ converge weakly to $\Theta_\infty \mathbf{1}_{B_1(0)^c}$ (Lemma \ref{lem:Sato}). Since $f_1$ is continuous and bounded from below, the Portmanteau lemma yields
	\[
	\liminf_{n\to\infty} (\mA_nf_1)(0)=\liminf_{n\to\infty} \int_{B_1(0)^c} f_1(x)\,d\Theta_n(x)
	\ge \int_{B_1(0)^c} f_1(x)\,d\Theta_\infty(x)=(\mA_\infty f_1(0).
	\]
	Combining the above estimates, we conclude the lower semicontinuity, which completes the proof.
\end{proof}

\begin{lemma}\label{lem:levy-compact}
	Let 
	\(
	E = \{\pm e_k : 1 \le k \le d\} \subset \mathbb{R}^d
	\)
	be the set of all signed standard basis vectors. 
	For a strictly positive definite matrix $Q\in\mcl{S}_{>0}(\mbr^d)$, denote the finite set of quadratic functions 
	\begin{align}\label{def:quad-set}
		\mcl{Q}_Q&=\cb{x^\top Qx + e^\top x: e\in E}\subset C_2^2(\mbr^d).
	\end{align}
	Then, for any $a \in \mathbb{R}$, the set \TS{fix}
	\begin{align}\label{def:Gq-set}
		G_Q(a) := \bigl\{ \mathcal{A} \in \levyG_2(\mathbb{R}^d) : \mathcal{A} \vphi(0) \le a \text{ for all } \vphi \in \mcl{Q}_Q \bigr\}	
	\end{align}
	is compact in $\levyG_2(\mbr^d)$. In particular, for any sequence $\{\mathcal{A}_n\}_n \subset K_a$, there exists a subsequence $\{\mathcal{A}_{n_k}\}_k$ and $\mA\in\levyG_2(\mbr^d)$ such that $\mA_{n_k}\xrightarrow{w}\mA$. 
\end{lemma}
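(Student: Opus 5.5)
The plan is to show that $G_Q(a)$ is sequentially compact in the weak topology. By Remark \ref{rem:metrizable} that topology is metrizable, so sequential compactness is the same as compactness. Given a sequence $\{\mA_n\}_n\subset G_Q(a)$, I will first turn the defining inequalities into a uniform second-moment bound on $\mu_n:=\mathfrak{I}(\mA_n)$. I will then extract a weakly convergent subsequence by Prokhorov. Finally, I will check that the limit is an element of $\levyG_2(\mbr^d)$ lying in $G_Q(a)$.

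\emph{Step 1 (moment bound).} Let $(\tilde\kappa_n,\alpha_n,\Theta_n)$ be the untruncated triplet of $\mA_n$. For $\vphi(x)=x^\top Qx+e^\top x$, formula \eqref{eq:LK-rep-op} evaluated at $0$ gives
\[
\mA_n\vphi(0)=\tilde\kappa_n^\top e+\tr(\alpha_nQ)+\int z^\top Qz\,d\Theta_n(z).
\]
The nonlocal part is exact because $\vphi(z)-\vphi(0)-z^\top\nabla\vphi(0)=z^\top Qz$. Averaging the constraints for $e$ and $-e$ yields $\tr(\alpha_nQ)+\int z^\top Qz\,d\Theta_n\le a$. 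Since this quantity is nonnegative, each constraint also gives $\pm\tilde\kappa_n^\top e_k\le a$, hence $|\tilde\kappa_n|_\infty\le a$. By Lemma \ref{lem:quad-linear}, the mean of $\mu_n$ is $m_n=\tilde\kappa_n$ and its covariance is $\Sigma_n=\alpha_n+\int zz^\top d\Theta_n$. Using $Q\ge\lambda_{\min}(Q)I$, we get
\[
\int|x|^2\,d\mu_n=|m_n|^2+\tr\Sigma_n\le d\,a^2+a/\lambda_{\min}(Q),
\]
uniformly in $n$.

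\emph{Step 2 (extraction).} The uniform bound gives tightness, so Prokhorov yields a subsequence $\mu_{n_k}\to\mu$ weakly. The class of infinitely divisible laws is closed under weak limits \cite[Lemma~7.8]{Sato1999}, so $\mu$ is ID. Lower semicontinuity of $x\mapsto|x|^2$ (Portmanteau, as in the proof of Proposition \ref{prop:minimizer}) shows $\mu\in\ID_2(\mbr^d)$. Let $\mA:=\mathfrak{I}^{-1}(\mu)\in\levyG_2(\mbr^d)$; then $\mA_{n_k}\xrightarrow{w}\mA$ by Definition \ref{def:weak-top}.

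\emph{Step 3 (closedness).} Each $\vphi\in\mcl{Q}_Q$ lies in $C_2^2(\mbr^d)$ and is bounded below, because $Q>0$. Proposition \ref{prop:liminf-gen} therefore applies and gives
\[
\mA\vphi(0)\le\liminf_k\mA_{n_k}\vphi(0)\le a,
\]
so $\mA\in G_Q(a)$. I expect this closedness step to be the only delicate point. Continuity of $\mA\mapsto\mA\vphi(0)$ fails for unbounded $\vphi$, since mass of the L\'evy measures can escape to infinity. Lower semicontinuity is exactly what is needed here, and it holds because $\vphi$ is bounded below. The remaining subtlety is that weak convergence does not preserve second moments; this is why the bound in Step 1 is used only for tightness, and the membership $\mu\in\ID_2(\mbr^d)$ is recovered by Fatou.
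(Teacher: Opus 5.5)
Your proof is correct and follows the paper's route. The $\pm e$ constraints bound the mean and $\operatorname{tr}(Q\Sigma)$, hence the second moments; Prokhorov, closedness of the ID class under weak limits, and Fatou then give a limit in $\levyG_2(\mbr^d)$. You read the bound off the untruncated triplet, whereas the paper uses Lemma \ref{lem:quad-linear} and the positive maximum principle, but this is the same computation.

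Your Step 3, which shows via Proposition \ref{prop:liminf-gen} that the limit stays in $G_Q(a)$, is missing from the paper's proof. The paper only establishes relative compactness, so your argument is the one that actually delivers the compactness asserted in the statement.
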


\begin{proof}
	Let $\mathfrak{I}(\mathcal{A}) := \delta_0 e^{\mathcal{A}} \in \ID_2(\mathbb{R}^d)$ denote the canonical map. 
	Set $\varphi_Q := \varphi_{Q,0} = x^\top Q x$. Our first goal is to establish a uniform bound on the quadratic moment:
	\begin{align}\label{bdd:quad-gen}
		\sup_{\mathcal{A} \in K_a} (e^{\mathcal{A}} \varphi_Q)(0) 
		= \sup_{\mathcal{A} \in K_a} \int_{\mathbb{R}^d} x^\top Q x \, d(\delta_0 e^{\mathcal{A}})(x) < \infty.
	\end{align}
	
	Suppose for now that \eqref{bdd:quad-gen} holds. Since $Q$ is strictly positive definite, there exists $c>0$ such that $x^\top Q x \ge c |x|^2$. Hence \eqref{bdd:quad-gen} implies that the family of measures $\{\delta_0 e^{\mathcal{A}} : \mathcal{A} \in K_a\}$ has uniformly bounded second moments, and is therefore tight. By Prokhorov's theorem, for any sequence $\{\mathcal{A}_n\}_n \subset K_a$, there exists a subsequence $\{\mathcal{A}_{n_k}\}_k$ such that $\delta_0 e^{\mathcal{A}_{n_k}}$ converges weakly to some infinitely divisible measure $\mu$. The uniform bound on second moments ensures that $\mu \in \ID_2(\mathbb{R}^d)$, and hence $\mu = \delta_0 e^{\mathcal{A}}$ for some $\mathcal{A} \in \levyG_2(\mathbb{R}^d)$. This shows that $\mathcal{A}_{n_k} \xrightarrow{w} \mathcal{A}$ in $\levyG_2(\mathbb{R}^d)$, completing the proof once \eqref{bdd:quad-gen} is established.
	
	To prove \eqref{bdd:quad-gen}, note that by Lemma \ref{lem:quad-linear} for any $t \ge 0$, \TS{refine?}
	\begin{align}\label{eq:etA}
		e^{t\mathcal{A}} \varphi_Q(0) = t (\mathcal{A} \varphi_Q)(0) + t^2 m_\mathcal{A}^\top Q m_\mathcal{A},
	\end{align}
	where $m_\mathcal{A}\in\mbr^d$ denotes the drift vector of $\mathcal{A}$. 
	For any $e \in E$, we have
	\[
	\mathcal{A} \varphi_{Q,e}(0) = \mathcal{A} \varphi_Q(0) + e^\top m_\mathcal{A} \le a,
	\]
	which implies
	\[
	\mathcal{A} \varphi_Q(0) + | m_\mathcal{A} |_\infty \le a, \qquad | m_\mathcal{A} |_\infty := \max_{1 \le k \le d} |(m_\mathcal{A})_k|.
	\]
	By the positive maximum principle, $\mathcal{A} \varphi_Q(0) \ge 0$, and hence both $\mathcal{A} \varphi_Q(0)$ and $\| m_\mathcal{A} \|_\infty$ are uniformly bounded by $a$. 
	
	Inserting these bounds into \eqref{eq:etA} with $t=1$, and noting that $m_\mathcal{A}^\top Q m_\mathcal{A} \le \lambda_{\max}(Q) \| m_\mathcal{A} \|^2 \le \lambda_{\max}(Q) d a^2$, we obtain
	\[
	e^{\mathcal{A}} \varphi_Q(0) \le a + \lambda_{\max}(Q) d a^2,
	\]
	which proves \eqref{bdd:quad-gen} and concludes the argument.
\end{proof}

\subsection{Metric on the space $C_2^2(\mathbb{R}^d)$}
We endow the space $C_2^2(\mathbb{R}^d)$ of quadratically bounded functions with the weighted norm defined by
\begin{align}\label{def:weighted-norm}
	\|f\|_{C_2^2(\mathbb{R}^d)} := \sup_{x \in \mathbb{R}^d} \frac{|f(x)|}{1 + |x|^2} + \sup_{x \in \mathbb{R}^d} \frac{|\nabla f(x)|}{1 + |x|} + \sup_{x \in \mathbb{R}^d} \|D^2 f(x)\|.
\end{align}
This topology is specifically chosen to capture the local behavior of L\'evy generators while accommodating the at-most quadratic growth of functions in the domain. The weights $1+|x|^2$ and $1+|x|$ act as tempering factors, ensuring that the supremum remains finite for the class of functions under consideration. 

Critically, this choice of norm ensures that the action of a generator $\mathcal{A} \in \mathcal{G}_2(\mathbb{R}^d)$ on a function $f$ is stable under local perturbations. The following lemma formalizes this continuity property, which will be essential for subsequent convergence results.

\begin{lemma}\label{lem:upper-semi}
	Let $\mathcal{A} \in \levyG_2(\mathbb{R}^d)$ and let $\{f_n\}_{n \in \mathbb{N} \cup \{\infty\}} \subset C_2^2(\mathbb{R}^d)$. If $f_n \to f_\infty$ in $C_2^2(\mathbb{R}^d)$, then $\mathcal{A} f_n(0) \to \mathcal{A} f_\infty(0)$.
\end{lemma}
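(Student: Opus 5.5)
Since $\mathcal{A}$ is linear, it suffices to show that $|\mathcal{A} g(0)| \le C_{\mathcal{A}} \|g\|_{C_2^2(\mathbb{R}^d)}$ for every $g\in C_2^2(\mathbb{R}^d)$, with a constant $C_{\mathcal{A}}$ depending only on the L\'evy triplet of $\mathcal{A}$. Applying this to $g=f_n-f_\infty$ then gives the claim. The plan is to write $\mathcal{A}$ in the untruncated form \eqref{eq:LK-rep-op}, with triplet $(\tilde\kappa,\alpha,\Theta)$ and $\int|z|^2\,d\Theta<\infty$, which is the default convention for $\mathcal{G}_2^\Lambda(\mathbb{R}^d)$. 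We first check that $\mathcal{A} g(0)$ is well defined for $g\in C_2^2$. By Taylor's theorem with integral remainder,
\[
\bigl|g(z)-g(0)-z^\top\nabla g(0)\bigr| \le \tfrac12 \sup_{x}\|D^2 g(x)\|\,|z|^2,
\]
so the jump integrand is dominated by $\tfrac12\|g\|_{C_2^2}|z|^2$. This function is $\Theta$-integrable by \eqref{eq:second-moment}.

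Next we bound the three parts separately. For the drift, $|\tilde\kappa^\top\nabla g(0)| \le |\tilde\kappa|\,\|g\|_{C_2^2}$, because the weight $1+|x|$ equals $1$ at $x=0$. For the diffusion, $\bigl|\tfrac12\operatorname{tr}(\alpha D^2 g(0))\bigr| \le \tfrac12|\alpha|\,\|g\|_{C_2^2}$, using any matrix norm compatible with the trace pairing. For the jumps, the Taylor bound above gives
\[
\Bigl|\int \bigl(g(z)-g(0)-z^\top\nabla g(0)\bigr)\,d\Theta(z)\Bigr| \le \tfrac12\|g\|_{C_2^2}\int|z|^2\,d\Theta(z).
\]
Adding these yields $C_{\mathcal{A}} = |\tilde\kappa| + \tfrac12|\alpha| + \tfrac12\int|z|^2\,d\Theta$.

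The only delicate point is the jump term. In the untruncated representation the integrand over $\{|z|>1\}$ grows quadratically, so we need a global second-derivative bound rather than a local one. This is exactly what the third summand of the norm \eqref{def:weighted-norm} provides. Together with the finite second moment of $\Theta$, it makes the estimate uniform over all of $\mathbb{R}^d$ in a single step, so no splitting into small and large jumps is required.

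Finally, applying the bound to $g=f_n-f_\infty$ gives
\[
|\mathcal{A} f_n(0)-\mathcal{A} f_\infty(0)| \le C_{\mathcal{A}}\|f_n-f_\infty\|_{C_2^2}\to0.
\]
The argument in fact shows that $f\mapsto \mathcal{A} f(0)$ is a bounded linear functional on $C_2^2(\mathbb{R}^d)$, which is stronger than the upper semicontinuity needed later.
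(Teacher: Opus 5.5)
Your proof is correct and follows the same route as the paper. Both reduce by linearity to $g_n=f_n-f_\infty$, expand $\mathcal{A} g_n(0)$ in the L\'evy--Khintchine form, and dominate the jump integrand by $C|y|^2$ using a global Hessian bound together with $\int|y|^2\,d\Theta<\infty$. The only difference is minor: you use $\sup_x\|D^2 g_n(x)\|\le\|g_n\|_{C_2^2}\to 0$ to get the explicit estimate $|\mathcal{A} g(0)|\le C_{\mathcal{A}}\|g\|_{C_2^2}$, whereas the paper combines a uniform Hessian bound with pointwise convergence and applies dominated convergence — a slightly sharper version of the same argument.
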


\begin{proof}
	Let $\mA = \Lag(\ka,\al,\Ta)$.
	Let $g_n := f_n - f_\infty$. By the linearity of $\mathcal{A}$, we have $\mathcal{A} f_n(0) - \mathcal{A} f_\infty(0) = \mathcal{A} g_n(0)$. Expanding this using the infinitesimal generator representation:
	\begin{align*}
		\mathcal{A} g_n(0) &= \kappa^\top \nabla g_n(0) + \frac{1}{2} \text{tr}[\alpha D^2 g_n(0)] + \int_{\mathbb{R}^d} [g_n(y) - g_n(0) - y^\top \nabla g_n(0)] \, d\Theta(y).
	\end{align*}
	Since $g_n \to 0$ in $C_2^2(\mathbb{R}^d)$, it follows that $g_n \to 0$ pointwise, $\nabla g_n(0) \to 0$, and $D^2 g_n(0) \to 0$. Given that $\|D^2 f_n(x)\|$ is uniformly bounded in $n$, there exists a constant $C \ge 0$ such that 
	\begin{align*}
		|g_n(y) - g_n(0) - y^\top \nabla g_n(0)| \le C|y|^2.
	\end{align*}
	As the Lévy measure $\Theta$ satisfies $\int_{\mathbb{R}^d} |y|^2 \, d\Theta(y) < \infty$, the dominated convergence theorem implies that the integral term vanishes as $n \to \infty$. Thus, $\lim_{n \to \infty} |\mathcal{A} f_n(0) - \mathcal{A} f_\infty(0)| = 0$.
\end{proof}

\subsection{Supremum/Infimum as indicators of constraint sets (Proofs of \eqref{ind:sup}, \eqref{ind:inf})}

To apply the generalized minimax principle of Theorem~\ref{thm:strongdual}, we take the space of operators from $C_2^2(\mbr^d)$ into measurable functions as the ambient linear space $\mX$, while the space of L\'evy generators $\levyG_2(\mbr^d)$ plays the role of the convex set $X$.  
We further set $\mcl{Y} := [C_2^2(\mbr^d)]^2$, equipped with the product topology induced by the seminorm topology $\tau$ introduced in the previous section.
We define the set $Y \subset \mcl{Y}$ by
\begin{align}\label{space:Y}
	Y := \cb{(\vphi,\psi)\in [C_2^2(\mbr^d)]^2 : 
		\inf_{(x,y)\in\mbr^{2d}} \sqb{c_2 - \vphi \oplus \psi} > -\infty }.
\end{align}
In other words, $Y$ consists of all pairs $(\vphi,\psi)$ such that the function $c_2 - \vphi \oplus \psi$ admits a global lower bound.  We note in particular that $\mcl{D}(0,0) \subset Y$ and $[C_b^2(\mbr^d)]^2\subset Y$.  
Although $Y$ is not a vector subspace of $[C_2^2(\mbr^d)]^2$, it is a convex subset.

We begin by establishing the identity \eqref{ind:sup}, which shows that the supremum functional acts as an indicator of the constraint set $\Gamma^\Lambda(\mA,\mB)$.

\begin{lemma}\label{lem:sup-ind}
	Fix $\mJ \in \levyG_2(\mathbb{R}^{2d})$. Then the identity \eqref{ind:sup} holds:
	\[
	\sup_{(\varphi,\psi)\in Y } 
	\Big[ \mA\varphi(0) + \mB\psi(0) - \mJ(\varphi \oplus \psi)(0,0) \Big] 
	=
	\begin{cases}
		0, & \mJ \in \Gamma^\Lambda(\mA,\mB),\\
		+\infty, & \mJ \notin \Gamma^\Lambda(\mA,\mB).
	\end{cases}
	\]
\end{lemma}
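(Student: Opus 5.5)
The argument splits according to whether $\mJ$ satisfies the coupling constraints. Write $G(\vphi,\psi):=\mA\vphi(0)+\mB\psi(0)-\mJ(\vphi\oplus\psi)(0,0)$, noting that $G$ is linear in $(\vphi,\psi)$. Since $\mJ\in\levyG_2(\mbr^{2d})$ has finite second moments, $\mJ(\vphi\oplus\psi)(0,0)$ is well defined for $C_2^2$ functions. By linearity of $\mJ$ and $1\in\ker\mJ$, we have $\mJ(\vphi\oplus\psi)=\mJ(\vphi\otimes 1)+\mJ(1\otimes\psi)$, so
\[
G(\vphi,\psi)=\bigl[\mA\vphi(0)-\mJ(\vphi\otimes1)(0,0)\bigr]+\bigl[\mB\psi(0)-\mJ(1\otimes\psi)(0,0)\bigr].
\]

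\emph{Case $\mJ\in\Ga^\La(\mA,\mB)$.} By definition, both brackets vanish for $\vphi,\psi\in C_b^2(\mbr^d)$, so we must extend this to $C_2^2$. Take the triplet $(\ka,\al,\Ta)$ of $\mJ$ in untruncated form. The generator acting on $\vphi\otimes1$ at $(0,0)$ equals the generator with the projected triplet $(\ka_1,\al_{11},(\pi_1)_\sharp\Ta)$ acting on $\vphi$ at $0$; this is a direct computation from \eqref{eq:LK-rep-op}, since jumps $(z,z')$ with $z=0$ contribute zero. The marginal condition on $C_b^2$ identifies this projected generator with $\mA$: the functions $\vphi\otimes 1$ with $\vphi\in C_b^2$ determine the generator of the projected L\'evy process, equivalently via Lemma \ref{lem:marg-char}. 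Hence the identity holds for all $\vphi\in C_2^2$. An alternative route is to approximate $\vphi\in C_2^2$ by $C_b^2$ functions agreeing on $B_R$ with uniformly bounded Hessian and use dominated convergence as in Lemma \ref{lem:upper-semi}. Either way $G\equiv0$ on $Y$, and the supremum is $0$.

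\emph{Case $\mJ\notin\Ga^\La(\mA,\mB)$.} Then there exists some $\vphi_0\in C_b^2$ with $\mJ(\vphi_0\otimes1)\ne(\mA\vphi_0)\otimes1$ (or the analogous statement for $\psi$). By translation invariance of all three generators, the discrepancy can be placed at the origin: replacing $\vphi_0$ by a translate gives $\mA\vphi_0(0)-\mJ(\vphi_0\otimes1)(0,0)=:\delta\neq0$. Since $[C_b^2]^2\subset Y$, linearity and closure under scalar multiples within $C_b^2$ allow us to take $(\lambda\vphi_0,0)\in Y$ with $\lambda\,\mathrm{sign}(\delta)\to\infty$. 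Then $G(\lambda\vphi_0,0)=\lambda\delta\to+\infty$, so the supremum is $+\infty$.

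The main obstacle is the first case, specifically justifying the extension from $C_b^2$ to $C_2^2$ test functions and pinning down that $\mJ$ acting on functions of $x$ alone is exactly $\mA$. This requires care with the untruncated drift: the projection of the compensated jump integral must match. I would settle this through the projection of the L\'evy--Khintchine exponent, $\Psi_\mJ(\xi,0)=\Psi_\mA(\xi)$. Uniqueness of the untruncated triplet then gives equality of the projected triplet with that of $\mA$, and the untruncated generator formula applies verbatim to $C_2^2$ functions.
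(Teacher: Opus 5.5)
Your proposal is correct and follows the same route as the paper. In the coupling case you use the marginal identities to make the functional vanish identically on $Y$, and otherwise you scale a $C_b^2$ test function that witnesses a failed marginal condition to send the supremum to $+\infty$; your extra steps, extending the marginal identities from $C_b^2$ to $C_2^2$ via uniqueness of the projected untruncated triplet and using translation invariance to move the discrepancy to the origin, supply details the paper's proof uses without comment.
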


\begin{proof}
	Suppose first that $\mJ \in \Gamma^\Lambda(\mA,\mB)$. By the marginal conditions, we have
	\[
	\mJ(\varphi \oplus \psi) = \mJ(\varphi \otimes 1 + 1 \otimes \psi) = \mA \varphi \otimes 1 + 1 \otimes \mB \psi.
	\]
	Evaluating at $(0,0)$ gives, for every $(\vphi,\psi)\in Y$:
	\[
	\mA\varphi(0) + \mB\psi(0) - \mJ(\varphi \oplus \psi)(0,0) = 0.
	\]
	Taking the supremum over all $(\varphi,\psi)$ then gives
	\[
	\sup_{(\varphi,\psi)\in Y} \Big[ \mA\varphi(0) + \mB\psi(0) - \mJ(\varphi \oplus \psi)(0,0) \Big] = 0.
	\]
	
	Now suppose $\mJ \notin \Gamma^\Lambda(\mA,\mB)$, then at least one of the marginal conditions fails for some functions in $C_b^2(\mbr^d)$.  
	Without loss of generality, suppose there exists a test function $\varphi\in C_0^2(\mbr^d)$ such that $\mJ(\varphi \otimes 1)(0,0) \neq  \mA\varphi(0)$. By scaling with a $-1$ if necessary, we may assume  $\mJ(\varphi \otimes 1)(0,0) <  \mA\varphi(0)$.
	Then, by scaling $\varphi$ by a positive constant $\lambda \to +\infty$, we obtain
	\[
	\sup_{\lambda > 0} \big[ \mA (\lambda \varphi)(0) - \mJ((\lambda \varphi) \otimes 1)(0,0) \big] = +\infty.
	\]
	Hence the supremum diverges, giving
	\[
	\sup_{(\varphi,\psi)} \Big[ \mA\varphi(0) + \mB\psi(0) - \mJ(\varphi \oplus \psi)(0,0) \Big] = +\infty.
	\]
	This proves the lemma. 
\end{proof}

\begin{lemma}\label{lem:inf-ind}
	Fix $(\vphi,\psi)\in Y$. Then the identity \eqref{ind:inf} holds:
	\begin{align*}
		\inf_{\mJ\in\levyG_2(\mbr^{2d})}[\mJ(c_2-\vphi\oplus\psi)](0,0)= \begin{cases}
			0, & c_2- \vphi\oplus \psi \text{ achieves global minimum at }(0,0),\\
			-\infty, & \mbox{else}.\\
		\end{cases}
	\end{align*}
\end{lemma}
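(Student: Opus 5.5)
Write $h:=c_2-\vphi\oplus\psi$. Since $(\vphi,\psi)\in Y$, $h$ is a $C^2$ function with quadratic growth bounds as in \eqref{cond:C2growth}, and $\inf h>-\infty$. The functional $\mJ\mapsto(\mJ h)(0,0)$ is what we minimize over all L\'evy generators on $\mbr^{2d}$. The argument splits into two cases according to whether $(0,0)$ is a global minimizer of $h$.

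\emph{Case 1: $h$ attains its global minimum at $(0,0)$.} Then $\nabla h(0,0)=0$ and $h(z)-h(0,0)\ge0$ for all $z$. Taking $\mJ=0$ (the zero generator, i.e.\ the trivial triplet, which belongs to $\levyG_2(\mbr^{2d})$) gives the value $0$, so the infimum is at most $0$. For the lower bound, I would use the positive maximum principle, exactly as it is used in Lemma~\ref{lem:levy-compact}. Concretely, in the untruncated representation \eqref{eq:LK-rep-op} we have
\[
(\mJ h)(0,0)=\tilde\kappa^\top\nabla h(0,0)+\tfrac12\tr(\alpha D^2h(0,0))+\int\bigl(h(z)-h(0,0)-z^\top\nabla h(0,0)\bigr)d\Theta(z).
\]
The integral converges because $h\in C_2^2$ and $\Theta$ has a finite second moment. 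Each term is then nonnegative: the drift term vanishes since $\nabla h(0,0)=0$; the diffusion term is $\ge0$ because $D^2h(0,0)\succeq0$ at an interior minimum and $\alpha\succeq0$; the jump integrand equals $h(z)-h(0,0)\ge0$. Hence the infimum is exactly $0$.

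\emph{Case 2: $(0,0)$ is not a global minimizer.} There are two subcases, and in each I exhibit a one-parameter family $\mJ_\lambda$ with $(\mJ_\lambda h)(0,0)\to-\infty$.
\begin{enumerate}[label=(\alph*)]
\item If $\nabla h(0,0)=:g\neq0$, take the pure drift generator $\mJ_\lambda=-\lambda g^\top\nabla$. This gives $(\mJ_\lambda h)(0,0)=-\lambda|g|^2\to-\infty$ as $\lambda\to\infty$.
\item If $\nabla h(0,0)=0$ but some $z_0$ has $h(z_0)<h(0,0)$, take the compound Poisson generator with L\'evy measure $\lambda\delta_{z_0}$, with the drift chosen so that the untruncated drift is zero. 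This lies in $\levyG_2$ since $\delta_{z_0}$ has a finite second moment. Then
\[
(\mJ_\lambda h)(0,0)=\lambda\bigl(h(z_0)-h(0,0)-z_0^\top\nabla h(0,0)\bigr)=\lambda\bigl(h(z_0)-h(0,0)\bigr)\to-\infty.
\]
\end{enumerate}
Either way, the infimum is $-\infty$.

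The only delicate point is well-definedness. One must check that $(\mJ h)(0,0)$ is finite for every $\mJ\in\levyG_2(\mbr^{2d})$, which follows from the bound $|h(z)-h(0)-z^\top\nabla h(0)|\le C|z|^2$ (using $\|D^2h\|_\infty<\infty$) together with $\int|z|^2d\Theta<\infty$. One must also make sure the chosen test generators are genuine elements of $\levyG_2$. Note that the hypothesis $(\vphi,\psi)\in Y$ is not needed for this dichotomy. I do not expect a real obstacle here: the lemma is essentially the positive maximum principle together with explicit drift and jump perturbations.
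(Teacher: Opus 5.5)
Your argument is correct and is essentially the paper's. In the first case you combine the positive maximum principle with the zero generator; you usefully verify the maximum principle term by term from the untruncated L\'evy--Khintchine formula, and you avoid the paper's tacit normalization $h(0,0)=0$. In the second case you scale a simple generator so that $(\mathcal{J}_\lambda h)(0,0)\to-\infty$. The only difference is that the paper uses an uncompensated compound Poisson generator $g\mapsto\lambda\int[g(\cdot+z)-g(\cdot)]\,d\Theta(z)$ with $\Theta$ supported where $h<h(0,0)$; this produces no gradient term, so your split into subcases (a) and (b) is unnecessary, though harmless.
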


\begin{proof}
	Let us denote $f := c_2-\varphi\oplus\psi \in C^2_2(\mathbb{R}^{2d})$. We note $f(0,0)=0$ due to that $\vphi(0)=\psi(0)=0$. 
	Suppose $\varphi\oplus\psi\le c_2$. Then $f\ge 0$ on $\mathbb{R}^{2d}$ and $(0,0)$
	is a global minimiser of $f$. By the positive maximum principle for Lévy
	generators, we have
	\[
	(\mathcal J f)(0,0)\ge 0
	\quad\text{for all } \mathcal J\in\levyG_2(\mathbb{R}^{2d}).
	\]
	Since the zero operator belongs to $\levyG_2(\mathbb{R}^{2d})$ and satisfies
	$(\mathcal J f)(0,0)=0$, the first equality follows.
	
	Suppose $\varphi\oplus\psi\le c_2$ does not hold. Then there exists a nonempty open
	set $U\subset\mathbb{R}^{2d}$ on which $f<0$. Choose a finite nonnegative
	measure $\Theta$ supported on a compact set $K\subset U$ and define the
	pure-jump Lévy generator $\mJ \in \levyG_2(\mbr^{2d})$:
	\[
	(\mathcal J g)(x,y)
	:= \int_{\mathbb{R}^{2d}}\bigl[g(x',y')-g(x,y)\bigr]\,d\Theta(x',y),
	\qquad (x',y')\in\mathbb{R}^{2d}.
	\]
	Since $f(0,0)=0$ and $\Theta(K)>0$, we have
	\[
	(\mathcal J f)(0,0) = \int_{K} f(z')\,\Theta(dz') < 0.
	\]
	For $\lambda>0$, set $\mathcal J_\lambda := \lambda \mathcal J$. Then
	$\mathcal J_\lambda\in\levyG_2(\mathbb{R}^{2d})$ and $\lim_{\la\to\infty}(\mJ_\la f)(0,0)=-\infty$.
	Consequently,
	\[
	\inf_{\mathcal J\in\levyG_2(\mathbb{R}^{2d})} (\mathcal J f)(0,0) = -\infty,
	\]
	which completes the proof.
\end{proof}

\subsection{Proof of Theorem \ref{thm:strongdual}}

We now proceed to the proof of Theorem \ref{thm:strongdual}.

\begin{proof}[Proof of Theorem \ref{thm:strongdual}]
	As explained earlier, it suffices to establish the equality \eqref{eq:strongdual} at the point $(x,y)=(0,0)$. 
	Recall that $X=\levyG_2(\mbr^{2d})$ is equipped with the weak topology introduced in Definition \ref{def:weak-top}, $C_2^2(\mbr^d)$  is the space of $C^2$ functions satisfying the growth conditions \eqref{cond:C2growth}, equipped with the norm \eqref{def:weighted-norm}. Let $Y\subset [C_2^2(\mbr^d)]^2$ be the convex set defined in \eqref{space:Y}. 
	Both $X$ and $Y$ are convex topological spaces.
	
	Let $F:X\times Y\to\mbr$ be the functional defined in \eqref{def:F.3}. The formal computation leading to the identity $\ta(0,0)=\om(0,0)$ is given in \eqref{eq:heuristic}. It therefore remains to justify rigorously each step of this computation, in particular the equalities labelled (1)--(3) in \eqref{eq:heuristic}. 
	The identities corresponding to steps (1) and (3) are established in Lemmas \ref{lem:sup-ind} and \ref{lem:inf-ind}, respectively. We are thus left to justify step (2), namely the interchange of infimum and supremum. This will follow from an application of the minimax principle, Theorem \ref{thm:minimax}.
	
	We first verify that the functional $F$ satisfies the assumptions of Theorem \ref{thm:minimax}. 
	By construction, the map $(\mJ,\vphi,\psi)\mapsto F(\mJ,\vphi,\psi)$ is bilinear, and therefore convex in $\mJ$ and concave in $(\vphi,\psi)$.
	We next verify the required semicontinuity properties. 
	Fix $(\vphi,\psi)\in Y$. Then the function $f:=c_2-\vphi\oplus\psi$ belongs to $C_2^2(\mbr^{2d})$ and admits a uniform lower bound $f\ge -M$ for some $M\ge0$. 
	By Proposition \ref{prop:liminf-gen}, for any sequence $\mJ_n\xrightarrow{w}\mJ$ in $X$,
	\begin{align*}
		\liminf_{n\to\infty}F(\mJ_n,\vphi,\psi)
		&= \liminf_{n\to\infty}\sqb{\mJ_n(c_2-\vphi\oplus\psi)(0,0)+\mA\vphi(0)+\mB\psi(0)}\\
		&= \liminf_{n\to\infty}\mJ_n(c_2-\vphi\oplus\psi)(0,0)+\mA\vphi(0)+\mB\psi(0)\\
		&\ge \mJ(c_2-\vphi\oplus\psi)(0,0)+\mA\vphi(0)+\mB\psi(0)
		=F(\mJ,\vphi,\psi).
	\end{align*}
	Since $\levyG_2(\mbr^{2d})$ is sequential (Remark \ref{rem:metrizable}), this shows the map $X\ni\mJ\mapsto F(\mJ,\vphi,\psi)$ is lower semicontinuous. 
	
	We now verify the upper semicontinuity in the second argument. 
	Fix $\mJ\in X$ and let $\{(\vphi_n,\psi_n)\}_n\subset Y$ converge to $(\vphi,\psi)\in Y$ in $C_2^2(\mbr^d)$. In particular, $\vphi_n\to\vphi$, $\psi_n\to\psi$ in $C_2^2(\mbr^d)$, and $\vphi_n\oplus\psi_n\to\vphi\oplus\psi$ in $C_2^2(\mbr^{2d})$. \TS{check} 
	By Lemma \ref{lem:upper-semi},
	\begin{align*}
		\lim_{n\to\infty}F(\mJ,\vphi_n,\psi_n)
		&=(\mJ c_2)(0,0)
		+\lim_{n\to\infty}\sqb{-\mJ(\vphi_n\oplus\psi_n)(0,0)+\mA\vphi_n(0)+\mB\psi_n(0)}\\
		&=(\mJ c_2)(0,0)-\mJ(\vphi\oplus\psi)(0,0)+\mA\vphi(0)+\mB\psi(0)\\
		&=F(\mJ,\vphi,\psi).
	\end{align*}
	Thus $(\vphi,\psi)\mapsto F(\mJ,\vphi,\psi)$ is continuous, and in particular upper semicontinuous.
	
	It remains to verify the inf-compactness condition. 
	Let $E=\{\pm e_k:1\le k\le d\}\subset\mbr^d$ be the set of all signed standard basis of $\mbr^d$. Define $\vphi,\vphi_e\in C_2^2(\mbr^d)$ and the set $\mcl{H}$  of pairs of functions by
	\begin{align}\label{def:vphi-e}
		\vphi(x)=-\f12|x|^2,
		\quad
		\vphi_e(x)=\vphi(x)-e^\top x,\quad \mcl{H}:=\{(\vphi_e,\vphi),(\vphi,\vphi_e):e\in E\}.
	\end{align}
	Note the set $\mcl{H}$ has $4d$ elements. Moreover, for each $e\in E$, 
	\begin{align}\label{eq:id4}
		c_2(x,y)-\vphi_e(x)-\vphi(y)
		=\f12|x-y|^2+\f12|x|^2+\f12|y|^2+e^\top x,	
	\end{align}
	which is bounded from below; the same holds for $(\vphi,\vphi_e)$. 
	Hence, $\mcl{H}$ is a finite set in $Y$.
	We now claim that the map
	\(
	\mJ\mapsto \max_{(\vphi,\psi)\in \mcl{H}}F(\mJ,\vphi,\psi)
	\)
	is inf-compact, that is, for each $r\in\mbr$, the sublevel set
	\[
	\mcl{K}_r
	:=\{\mJ\in\levyG_2(\mbr^{2d}):F(\mJ,\vphi,\psi)\le r \text{ for all }(\vphi,\psi)\in \mcl{H}\}
	\]
	is compact. 
	
	Define the positive definite matrix $Q\in\mcl{S}_{>0}(\mbr^{2d})$ by
	\begin{align*}
		Q&= \f   12 \begin{bmatrix}
			2I & -I \\ -I & 2I
		\end{bmatrix}.
	\end{align*}
	Particularly, the associated quadratic form is given by
	\[
	(x,y)^\top Q(x,y)=\f12|x-y|^2+\f12(|x|^2+|y|^2).
	\]
	Introduce the set $G_Q(a)$ for $a\in\mbr$ from \eqref{def:Gq-set}. 
	We claim that for each $r\in\mbr$, there exists $a=a_r\in\mbr$ such that
	\begin{align}\label{eq:compact-inclu}
		\mcl{K}_r
		\subset
		G_Q(a).
	\end{align}
	Assuming \eqref{eq:compact-inclu} for the moment, Lemma \ref{lem:levy-compact} implies that the right-hand side is compact, and hence so is $\mcl{K}_r$, being closed. This verifies the inf-compactness condition. Consequently, Theorem \ref{thm:minimax} applies and justifies step (2) in \eqref{eq:heuristic}. The proof is therefore complete once \eqref{eq:compact-inclu} is established.
	
	To verify \eqref{eq:compact-inclu}, let $\mJ\in\mcl{K}_r$. Then for all $e\in E$,
	\begin{align}\label{step:4.2}
		F(\mJ,\vphi_e,\vphi)
		=\mJ(c_2-\vphi_e\oplus\vphi)(0,0)+\mA\vphi_e(0)+\mB\vphi(0)\le r.
	\end{align}
	By \eqref{eq:id4}, we have
	\begin{align*}
	(c_2-\vphi_e \oplus \vphi)(x,y)= \Phi_{(e,0)}^Q(x,y):= (x,y)^\top Q (x,y)+ e^\top x.
	\end{align*}
	Since $\mA\vphi(0),\mB\vphi(0)\ge0$ by the positive maximum principle, \eqref{step:4.2} implies
	\[
	\mJ(\Phi_{(e,0)}^Q)(0,0)\le r-\mA\vphi_e(0)\le r-e^\top m_\mA\le r+ |m_\mA|_\infty.
	\]
	An analogous argument gives
	\begin{align*}
		\mJ(\Phi_{(0,e)}^Q)(0,0)\le r+ |m_\mB|_\infty.
	\end{align*}
	Setting $a:=r+|(m_\mA,m_\mB)|_\infty$, the two bounds above imply $\mJ\in G_Q(a)$. This completes the proof of the inclusion \eqref{eq:compact-inclu}. 
\end{proof}

\appendix
\section{A Generalized Minimax Principle}\label{sec:5}

In this appendix, we prove Theorem~\ref{thm:minimax}, which is a mild generalization of Sion's minimax theorem. Our proof adapts the elementary argument of Komiya \cite{Komiya1988}. The theorem generalizes Komiya's result in two directions: first, the classical assumption that $X$ is compact is relaxed to the requirement that the function $F$ is inf-compact; this allows $X$ itself to be non-compact, as long as the function forces compactness where needed. Second, the topology is required only on the convex spaces $X$ and $Y$ themselves, rather than on ambient topological vector spaces. Although the proof closely follows Komiya's and contains no essentially new ideas, we present it here for the sake of completeness.

The overall argument is established by showing the reverse inequality of weak duality, which otherwise trivially holds. By assuming the contrary, we define a family of closed sublevel sets. Exploiting the inf-compactness assumption then allows us to reduce an empty infinite intersection of these sets to a finite subcollection. This reduction subsequently permits a direct application of Lemma \ref{lem:finite_minimax} to resolve the finite case, thereby completing the proof.

\begin{lemma}\label{lem:finite_minimax}
	Suppose the assumptions of Theorem \ref{thm:minimax} hold. For any finite subset $\cb{y_k}_{k=1}^n \subset Y$ and any $\alpha \in \mbr$ with $\alpha < \inf_{x \in X} \max_{1 \le i \le n} F(x,y_i)$, there exists $y^* \in Y$ such that
	\[ \alpha < \inf_{x \in X} F(x,y^*). \]
\end{lemma}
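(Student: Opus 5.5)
We argue as in Komiya's proof of Sion's theorem, by induction on $n$, treating the case $n=2$ as the core step. For $n=1$ the claim is trivial: take $y^*=y_1$. Suppose the statement holds for all collections of $n-1$ points, and let $\alpha<\inf_{x\in X}\max_{1\le i\le n}F(x,y_i)$. Put $X':=\{x\in X: F(x,y_n)\le\alpha\}$. This set is convex, because $F(\cdot,y_n)$ is quasi-convex, and closed, because it is l.s.c. If $X'=\emptyset$ we take $y^*=y_n$. Otherwise $X'$ is again a convex set carrying a convex topology. On $X'$ we have $\alpha<\inf_{x\in X'}\max_{i\le n-1}F(x,y_i)$. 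The induction hypothesis, applied with $X$ replaced by $X'$, then yields $y'\in Y$ with $\alpha<\inf_{x\in X'}F(x,y')$. Combining this with the definition of $X'$ gives $\alpha<\inf_{x\in X}\max\{F(x,y'),F(x,y_n)\}$. Thus everything reduces to the two-point case.

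The induction needs the hypotheses to survive the passage to $X'$. Quasi-convexity, lower semicontinuity and quasi-concavity/u.s.c. in $y$ restrict without change. Inf-compactness also restricts: intersecting a compact sublevel set with the closed set $X'$ leaves a compact set. (If the induction passes through $y$'s outside $K_0$, we keep $K_0$ fixed, and the restricted map $\max_{K_0}F$ remains inf-compact on $X'$.)

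The two-point case is the following. Suppose $\alpha<\inf_x\max\{F(x,y_1),F(x,y_2)\}$ and, for contradiction, $\inf_x F(x,z)\le\alpha$ for every $z$ on the segment $[y_1,y_2]$. Choose $\beta$ with $\alpha<\beta<\inf_x\max\{F(x,y_1),F(x,y_2)\}$. For $z_\lambda=\lambda y_1+(1-\lambda)y_2$, define the level sets $L_z:=\{x:F(x,z)\le\alpha\}$ and $L'_z:=\{x: F(x,z)\le\beta\}$. Each is closed and convex, and nonempty under the contrary hypothesis.

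By quasi-concavity in $y$, $F(x,z)\ge\min\{F(x,y_1),F(x,y_2)\}$. Hence $L'_z\subset L'_{y_1}\cup L'_{y_2}$, and these two sets are disjoint by the choice of $\beta$. Since $L'_z$ is convex, it is connected, and it is nonempty because it contains $L_z$. So $L'_z$ lies entirely in one of $L'_{y_1}$ or $L'_{y_2}$. Here connectedness uses the convex topology: segments are continuous images of $[0,1]$.

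Let $I:=\{\lambda: L'_{z_\lambda}\subset L'_{y_1}\}$ and $J$ its complement in $[0,1]$. We show both are open in $[0,1]$. Upper semicontinuity of $F(x,\cdot)$ along the segment, together with the convex topology on $Y$ (so that $\lambda\mapsto z_\lambda$ is continuous), gives the following: for $x\in L_{z_\lambda}$, the value $F(x,z_\mu)<\beta$ for $\mu$ near $\lambda$. Therefore $L_{z_\lambda}\subset L'_{z_\mu}$, which forces $L'_{z_\mu}$ onto the same side as $L'_{z_\lambda}$. Then $I$ and $J$ are disjoint, open, nonempty (they contain $1$ and $0$ respectively), and cover $[0,1]$, contradicting connectedness. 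Note that the two-point case uses no compactness at all, only closedness, convexity and connectedness.

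\textbf{Main obstacle.} Where compactness would usually enter is in guaranteeing that the infima are approached inside a compact region. The inf-compactness hypothesis is needed not in this lemma but in the main theorem, which reduces an infinite intersection of sublevel sets to a finite one. The remaining point to check carefully is that the openness of $I$ and $J$ uses only u.s.c. in $y$, as above. If that step fails, the fallback is to intersect every $L_z$ with the compact set $\{\max_{K_0}F\le r\}$ for a suitable $r$, and to argue via nested compact sets.
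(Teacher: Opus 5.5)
Your scheme is the paper's: induction, reduction to two points by restricting to a sublevel set of $F(\cdot,y_n)$, and a connectedness argument along the segment. The gap is that you take Komiya's sublevel sets at the exact level $\alpha$. In Komiya's setting that silently uses compactness of $X$, i.e.\ that the relevant infima are attained. Here $X$ is not compact, and two of your deductions do not follow.

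First, the inductive step. From $\alpha<\inf_{X'}F(\cdot,y')$ and $F(x,y_n)>\alpha$ for $x\notin X'$, you only get $\inf_{x\in X}\max\{F(x,y'),F(x,y_n)\}\ge\alpha$. The two-point case needs the strict inequality. Likewise, $X'=\emptyset$ gives $F(\cdot,y_n)>\alpha$ pointwise, but not $\inf_x F(x,y_n)>\alpha$. This genuinely fails. Take $X=[0,\infty)$, $Y$ the simplex spanned by $y_1,y_2,y_3$, and $F(x,\cdot)$ affine with $F(x,y_1)=F(x,y_2)=x$ and $F(x,y_3)=e^{-x}$. All hypotheses hold, with $K_0=\{y_1\}$, and $\alpha=0<\inf_x\max(x,e^{-x})$. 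Then $X'=\emptyset$, yet $\inf_x F(x,y_3)=0=\alpha$, so your choice $y^*=y_3$ does not satisfy the conclusion.

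Second, the two-point case. The contrary hypothesis $\inf_x F(x,z)\le\alpha$ does not make $L_z=\{F(\cdot,z)\le\alpha\}$ nonempty. Your openness argument needs a point $x\in L_{z_\lambda}$ in order to push $L'_{z_\mu}$ onto the same side as $L'_{z_\lambda}$.

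The repair is what the paper does: insert intermediate levels. In the inductive step, set $\alpha_0:=\inf_X\max_i F(\cdot,y_i)$ and pick $\beta\in(\alpha,\alpha_0)$. Restrict to $\{F(\cdot,y_n)\le\beta\}$; on this set $\max_{i<n}F(\cdot,y_i)\ge\alpha_0>\beta$. Apply the induction hypothesis at level $\beta$ to get $y'$ with $\beta<\inf_{\{F(\cdot,y_n)\le\beta\}}F(\cdot,y')$. This yields $\inf_X\max\{F(\cdot,y'),F(\cdot,y_n)\}\ge\beta>\alpha$, and the empty case becomes trivial as well.

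In the two-point case, replace $L_z$ by $\{F(\cdot,z)\le t\}$ for a fixed $t\in(\alpha,\beta)$; this set is nonempty. Then run your u.s.c. argument with $t<\beta$. Note that it only places a single point of $L_{z_\lambda}$ into $L'_{z_\mu}$ for $\mu$ near $\lambda$, since the neighbourhood depends on $x$; that single point is all you need.

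With these changes your closing remark is correct: the lemma uses no compactness. Your treatment of how inf-compactness restricts to $X'$, and of connectedness via the convex topology, is fine.
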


\begin{proof}
	We proceed by induction on $n$. The base case $n=1$ is trivial: simply take $y^* = y_1$.
    For the inductive step, we first need to establish the case $n=2$ separately. Let us assume for the moment that the lemma holds for $n=2$; we will prove this base case after completing the inductive step. 

    Now assume that the lemma holds for some $n=k-1 \ge 2$, and consider a finite set $\{y_1,\dots,y_{k}\} \subset Y$ and a real number $\alpha$ such that
    \begin{align}\label{eq:alpha}
        \alpha <\al_0:=  \inf_{x \in X} \max_{1 \le i \le k} F(x,y_i).    
    \end{align}
    Choose $\be\in(\al,\al_0)$. For $y\in Y$, $r\in\mbr$, denote the sublevel set
    \begin{align}\label{def:sublevel}
        X_y(r):= \cb{x\in X:F(x,y)\le r}. 
    \end{align}
    Note $X_y(r)$ is closed and convex. It is also nonempty if $r>\al$. \TS{check}
    The strict inequality and the definition of sublevel set imply
	\[ \beta < \al_0 = \inf_{x \in X_{y_{k}}(\beta)} \max_{1 \le i \le k} F(x,y_i). \]

    Note the restricted function $F:X_{y_k}(\be)\times Y\to\mbr$ satisfies the condition from Theorem \ref{thm:minimax}. 
	Applying the induction hypothesis to this restricted function, there exists some $y' \in Y$ such that $\beta < \inf_{x \in X_{y_k}(\beta)} F(x,y')$. This implies that 
    $$\alpha < \beta < \inf_{x \in X} \max(F(x,y'), F(x,y_k)).$$ 
    Applying the case for $n=2$ case immediately yields the desired $y^* \in Y$.

    \emph{Proof of the case $n=2$.}
	Let us now return to the proof of the case $n=2$. 
    Let $\al,\alpha_0$ be give in \eqref{eq:alpha} (with $k=2$), and suppose for the sake of contradiction that $\alpha \ge \inf_{x \in X} F(x,y)$ for all $y \in Y$. Fix $\be'\in(\al,\al_0)$.
	
	For $s\in[1,2]$, denote $y_s = (2-s)y_1+(s-1)y_2$ the linear interpolation between $y_1,y_2$. 
    For $t\in\mbr$, consider the sublevel set $X_{y_s}(t)$. 
    As mentioend, these sets are closed, convex, and for $t > \alpha$, nonempty. Since $\be'>\al$, we have
    $$X_{y_1}(\beta') \cap X_{y_2}(\beta') = \emptyset.$$ 
    Since $F(x, \cdot)$ is quasi-concave, we have $F(x,u) \ge \min(F(x,y_1), F(x,y_2))$. Thus, if $F(x,u) \le \beta'$, it must be that either $F(x,y_1) \le \beta'$ or $F(x,y_2) \le \beta'$. This means $X_u(\beta') \subseteq X_{y_1}(\beta') \cup X_{y_2}(\beta')$. Because $X_u(\beta')$ is convex, it is topologically connected. A connected set cannot be partitioned by two disjoint closed sets, so it must lie entirely within $X_{y_1}(\beta')$ or $X_{y_2}(\beta')$.
	
	Fix an arbitrary $t \in (\alpha, \beta')$. We partition the the interval $[1,2]$ into two disjoint sets $I$ and $J$:
	\begin{align*}
		I &= \cb{ s \in [1,2] : X_{y_s}(t) \subseteq X_{y_1}(\beta') }, \qquad 
		J = \cb{ s \in [1,2] : X_{y_s}(t) \subseteq X_{y_2}(\beta') }.
	\end{align*}
	Clearly, $I \cup J = [1,2]$ and $I \cap J = \emptyset$. We show that $I$ is closed. Let $\cb{s_m} \subset I$ be a sequence such that $s_m \to s \in [1, 2]$. For any $x \in X_{y_s}(t)$, we have $F(x,y_s) \le t < \beta'$. By the upper semicontinuity of $F(x, \cdot)$, 
	\[ \limsup_{m \to \infty} F(x,y_{s_m}) \le F(x,y_s) \le t < \beta'. \]
	Hence, for sufficiently large $m$, $F(x,y_{s_m}) \le \beta'$, meaning $x \in X_{y_{s_m}}(\beta')$. Since $y_{s_m} \in I$, we know $X_{y_{s_m}}(t) \subseteq X_{y_1}(\beta')$. The connectedness of $X_{y_{s_m}}(\beta')$ and its required containment within the disjoint union $X_{y_1}(\beta') \cup X_{y_2}(\beta')$ forces $X_{y_{s_m}}(\beta') \subseteq X_{y_1}(\beta')$. Therefore, $x \in X_{y_1}(\beta')$. This proves $X_{y_s}(t) \subseteq X_{y_1}(\beta')$, so $s \in I$. 
	
	By symmetry, $J$ is also closed. This is a contradiction, as the connected interval $[1,2]$ cannot be written as the disjoint union of two non-empty closed sets. The claim holds, completing the proof of the lemma.
\end{proof}

We now proceed to the proof of Theorem \ref{thm:minimax}.

\begin{proof}[Proof of Theorem \ref{thm:minimax}]
	It is always true that $$\sup_{y \in Y} \inf_{x \in X} F(x,y) \le \inf_{x \in X} \sup_{y \in Y} F(x,y).$$ Therefore, we need only prove the reverse inequality. 

	Let $\alpha, \beta \in \mbr$ be such that $\alpha < \beta < \inf_{x \in X} \sup_{y \in Y} F(x,y)$. It suffices to show that there exists some $y^* \in Y$ such that $\alpha < \inf_{x \in X} F(x,y^*)$. 
	For each $y \in Y$ and $t \in \mbr$, define the sublevel set \eqref{def:sublevel}, which is closed and convex.

	By our choice of $\beta$, for any $x \in X$ there exists a $y \in Y$ such that $F(x,y) > \beta$. Hence, the infinite intersection is empty: $\bigcap_{y \in Y} X_y(\beta) = \emptyset$. 
    By the inf-compactness assumption, there is a finite set $K_0\subset Y$ such that $\bigcap_{y \in K_0} X_y(\beta)$ is compact. Hence the finite intersection property of compat sets implies the exisitence of finite sets $\{y_i\}_{k=1}^n\subset Y$ (containing $K_0$) such that 
    such that \TS{strict? $\be<$?}
	\[ \bigcap_{i=1}^n X_{y_i}(\beta) = \emptyset, \quad \text{which implies} \quad \beta \le \inf_{x \in X} \max_{1 \le i \le n} F(x,y_i). \]

	Since $\alpha < \inf_{x \in X} \max_{1 \le i \le n} F(x,y_i)$, we can directly apply Lemma \ref{lem:finite_minimax} to the finite set $\cb{y_i}_{i=1}^n$. This guarantees the existence of a $y^* \in Y$ such that $\alpha < \inf_{x \in X} F(x,y^*)$, completing the proof of the theorem.
\end{proof}

\bibliography{refs}

@incollection{MR3050280,
	author    = {Ambrosio, Luigi and Gigli, Nicola},
	title     = {A User's Guide to Optimal Transport},
	booktitle = {Modelling and Optimisation of Flows on Networks},
	editor    = {Piccoli, Benedetto and Rascle, Michel},
	series    = {Lecture Notes in Mathematics},
	volume    = {2062},
	pages     = {1--155},
	publisher = {Springer},
	address   = {Berlin, Heidelberg},
	year      = {2013},
	doi       = {10.1007/978-3-642-32160-3_1},
	url       = {https://doi.org/10.1007/978-3-642-32160-3_1},
}

@article{Chaintron_2022_1,
	title      = {Propagation of chaos: A review of models, methods and applications. {I}. {M}odels and methods},
	volume     = {15},
	issn       = {1937-5077},
	url        = {http://dx.doi.org/10.3934/krm.2022017},
	doi        = {10.3934/krm.2022017},
	number     = {6},
	journal    = {Kinetic and Related Models},
	publisher  = {American Institute of Mathematical Sciences (AIMS)},
	author     = {Chaintron, Louis-Pierre and Diez, Antoine},
	year       = {2022},
	pages      = {895-1015}
}

@book{MR2459454,
	author     = {Villani, C\'{e}dric},
	title      = {Optimal Transport: Old and New},
	series     = {Grundlehren der mathematischen Wissenschaften},
	volume     = {338},
	publisher  = {Springer},
	address    = {Berlin, Heidelberg},
	year       = {2009},
	pages      = {xxii+973}
}

@book{Sato1999,
	author       = {Sato, Ken-iti},
	title        = {L\'evy Processes and Infinitely Divisible Distributions},
	series       = {Cambridge Studies in Advanced Mathematics},
	volume       = {68},
	publisher    = {Cambridge University Press},
	address      = {Cambridge},
	year         = {1999},
	pages        = {486},
	isbn         = {0521553024},
}

@article{Chen1994,
	author       = {Chen, Mu-Fa},
	title        = {Optimal Markovian Couplings and Applications},
	journal      = {Acta Mathematica Sinica (English Series)},
	volume       = {10},
	number       = {3},
	pages        = {260--275},
	year         = {1994},
	doi          = {10.1007/BF02560717},
}

@article{LimTeoh2025,
	author       = {Lim, Tau Shean and Teoh, Chao Dun},
	title        = {Abstract Formulation of Mean-Field Models and Propagation of Chaos},
	journal      = {arXiv preprint},
	year         = {2025},
	volume       = {arXiv:2508.02224},
	url          = {https://arxiv.org/abs/2508.02224},
}

@article{KangLim2025,
	author       = {Wei Yang Kang and Tau Shean Lim},
	title        = {On Optimal Markovian Couplings of Lévy Processes},
	journal      = {arXiv preprint arXiv:2509.23086},
	year         = {2025},
	url          = {https://arxiv.org/abs/2509.23086},
	note         = {82 pages}
}

@article{KendallMajkaMijatovic2024,
	author       = {Kendall, Wilfrid S. and Majka, Mateusz B. and Mijatovi{\'c}, Aleksandar},
	title        = {Optimal Markovian coupling for finite-activity L\'evy processes},
	journal      = {Bernoulli},
	volume       = {30},
	number       = {4},
	pages        = {2821--2845},
	year         = {2024},
	doi          = {10.3150/23-BEJ1696},
	url          = {https://projecteuclid.org/journals/bernoulli/volume-30/issue-4/Optimal-Markovian-coupling-for-finite-activity-L%C3%A9vy-processes/10.3150/23-BEJ1696.short},
}

@article{Zhang2000,
	author       = {Zhang, Shaoyi},
	title        = {Existence and Application of Optimal Markovian Coupling with Respect to Non-Negative Lower Semi-Continuous Functions},
	journal      = {Acta Mathematica Sinica, English Series},
	volume       = {16},
	number       = {2},
	year         = {2000},
	pages        = {261--270},
	doi          = {10.1007/s101140000049},
}

@incollection{Chen2020OptimalCouplings,
	author       = {Chen, Mu-Fa},
	title        = {Optimal Couplings and Application to Riemannian Geometry},
	booktitle    = {Probability Theory and Mathematical Statistics},
	editor       = {{\relax edited by contributors}},
	publisher    = {Walter de Gruyter GmbH},
	year         = {2020},
	pages        = {121--142},
	doi          = {10.1515/9783112319321-010},
}

@article{Griffeath1975,
	author = {Griffeath, D.},
	title = {A maximal coupling for Markov chains},
	journal = {Zeitschrift für Wahrscheinlichkeitstheorie und Verwandte Gebiete},
	volume = {31},
	number = {2},
	pages = {95--106},
	year = {1975},
	publisher = {Springer}
}

@article{LindvallRogers1986,
	author = {Lindvall, T. and Rogers, L. C. G.},
	title = {Coupling of Multidimensional Diffusions by Reflection},
	journal = {The Annals of Probability},
	volume = {14},
	number = {3},
	pages = {860--872},
	year = {1986},
	publisher = {Institute of Mathematical Statistics}
}

@article{doeblin1937theorie,
	title={Expos\'e de la th\'eorie des cha\^\i nes simples constantes de Markov \`a un nombre fini d'\'etats},
	author={Doeblin, W.},
	journal={Rev. Math. Union Interbalcanique},
	volume={1},
	pages={77--105},
	year={1937}
}

@book{levin2017markov,
	title={Markov Chains and Mixing Times},
	author={Levin, David A. and Peres, Yuval and Wilmer, Elizabeth L.},
	edition={2nd},
	year={2017},
	publisher={American Mathematical Society}
}

@inproceedings{propp1996exact,
	title={Exact sampling with coupled Markov chains and applications to statistical mechanics},
	author={Propp, James G. and Wilson, David B.},
	booktitle={Random Structures \& Algorithms},
	volume={9},
	pages={223--252},
	year={1996}
}

@book{stroock2006multidimensional,
	title={Multidimensional Diffusion Processes},
	author={Stroock, Daniel W. and Varadhan, S. R. Srinivasa},
	year={2006},
	publisher={Springer},
	series={Classics in Mathematics},
	address={Berlin},
	isbn={978-3-540-28998-2}
}

@article{kendall1986nonnegative,
	title={Nonnegative {R}icci curvature and the {B}rownian coupling property},
	author={Kendall, Wilfrid S.},
	journal={Stochastic Processes and their Applications},
	volume={20},
	number={2},
	pages={245--258},
	year={1986},
	publisher={Elsevier},
	doi={10.1016/0304-4149(85)90112-2}
}

@book{wang2013harnack,
	title={Harnack Inequalities and Applications for Stochastic Partial Differential Equations},
	author={Wang, Feng-Yu},
	series={Springer Briefs in Mathematics},
	publisher={Springer},
	address={New York},
	year={2013},
	doi={10.1007/978-1-4614-8535-3}
}

@article{kendall2015coupling,
	author    = {Wilfrid S. Kendall},
	title     = {Coupling, local times, immersions},
	journal   = {Bernoulli},
	volume    = {21},
	number    = {2},
	pages     = {1014--1046},
	year      = {2015},
	doi       = {10.3150/14-BEJ596}
}

@article{LiangSchillingWang2020,
	author    = {Liang, M. and Schilling, R. L. and Wang, J.},
	title     = {A unified approach to coupling SDEs driven by L\'evy noise and some applications},
	journal   = {Bernoulli},
	volume    = {26},
	number    = {1},
	pages     = {664--693},
	year      = {2020},
	doi       = {10.3150/19-BEJ1148},
}

@article{Komiya1988,
    author = {Komiya, Hidetoshi},
    title = {Elementary proof for Sion's minimax theorem},
    journal = {Kodai Mathematical Journal},
    volume = {11},
    number = {1},
    pages = {5--7},
    year = {1988},
    doi = {10.2996/kmj/1138038812},
    mrnumber = {0930413},
    zblnumber = {0646.49004},
    url = {https://doi.org/10.2996/kmj/1138038812}
}

@book{Bertoin1996,
    author = {Bertoin, Jean},
    title = {L\'evy Processes},
    series = {Cambridge Tracts in Mathematics},
    volume = {121},
    publisher = {Cambridge University Press},
    year = {1996},
    pages = {265},
    isbn = {0-521-56243-0}
}

@article{JackaMijatovic2015,
	author    = {Saul D. Jacka and Aleksandar Mijatovi{\'c}},
	title     = {Coupling and tracking of regime-switching martingales},
	journal   = {Electronic Journal of Probability},
	volume    = {20},
	pages     = {1--39},
	year      = {2015},
	number    = {38},
	doi       = {10.1214/ejp.v20-2307},
	mrnumber  = {3338511},
	zblnumber = {1333.60085}
}
\bibliographystyle{abbrv}

\end{document}